\documentclass[reqno]{amsart}
\usepackage{amsmath,amsthm,amscd,amssymb,amsfonts, amsbsy}
\usepackage{latexsym}
\usepackage{color}
\usepackage{enumerate}
\usepackage{pxfonts}
\usepackage{verbatim}

\theoremstyle{plain}
\newtheorem{theorem}[equation]{Theorem}
\newtheorem{lemma}[equation]{Lemma}

\newtheorem{proposition}[equation]{Proposition}

\theoremstyle{definition}
\newtheorem{definition}[equation]{Definition}
\newtheorem{condition}[equation]{Condition}

\theoremstyle{remark}
\newtheorem{remark}[equation]{Remark}

\newtheorem{acknowledgment}[equation]{Acknowledgment}

\newcommand{\dv}{\operatorname{div}}

\newcommand{\diam}{\operatorname{diam}}

\newcommand{\tr}{\operatorname{tr}}

\newcommand*{\tran}{^{\mkern-1.5mu\mathsf{T}}}

\numberwithin{equation}{section}

\newcommand{\bR}{\mathbb{R}}
\newcommand{\bZ}{\mathbb{Z}}

\newcommand\cD{\mathcal{D}}

\newcommand\cL{\mathcal{L}}

\providecommand{\set}[1]{\{#1\}}

\providecommand{\abs}[1]{\lvert#1\rvert}
\providecommand{\Abs}[1]{\left\lvert#1\right\rvert}

\providecommand{\norm}[1]{\lVert#1\rVert}

\renewcommand{\vec}[1]{\boldsymbol{#1}}

\begin{document}
\title[Estimates for linear elliptic operators II]
{On $C^1$, $C^2$, and weak type-$(1,1)$ estimates for linear elliptic operators: Part II}

\author[H. Dong]{Hongjie Dong}
\address[H. Dong]{Division of Applied Mathematics, Brown University,
182 George Street, Providence, RI 02912, United States of America}
\email{Hongjie\_Dong@brown.edu}
\thanks{H. Dong was partially supported by the NSF under agreement DMS-1056737 and DMS-1600593.}

\author[L. Escauriaza]{Luis Escauriaza}
\address[L. Escauriaza]{UPV/EHU, Dpto. Matem\'aticas, Barrio Sarriena s/n 48940 Leioa, Spain}
\email{luis.escauriaza@ehu.eus}
\thanks{L. Escauriaza is supported by grants MTM2014-53145-P and IT641-13 (GIC12/96).}

\author[S. Kim]{Seick Kim}
\address[S. Kim]{Department of Mathematics, Yonsei University, 50 Yonsei-ro, Seodaemun-gu, Seoul 03722, Republic of Korea}
\email{kimseick@yonsei.ac.kr}
\thanks{S. Kim is partially supported by NRF Grant No. NRF-2016R1D1A1B03931680.}

\subjclass[2010]{Primary 35B45, 35B65 ; Secondary 35J47}

\keywords{Dini mean oscillation, $C^1$ estimates, $C^2$ estimates, Weak type-(1,1) estimates.}

\begin{abstract} We extend and improve the results in \cite{DK16}:  showing that weak solutions to full elliptic equations in divergence form with zero Dirichlet boundary conditions are continuously differentiable up to the boundary when the leading coefficients have Dini mean oscillation and the lower order coefficients verify certain conditions.  Similar results are obtained for non-divergence form equations. We extend the weak type-(1, 1) estimates in \cite{DK16} and  \cite{Es94} up to the boundary and derive a Harnack inequality for non-negative adjoint solutions to non-divergence form elliptic equations, when the leading coefficients have Dini mean oscillation.
\end{abstract}
\maketitle

\section{Introduction and main results}
Let $\Omega \subset \bR^n$ be a bounded domain.
We consider a second-order elliptic operator $L$ in divergence form
\begin{equation}						\label{master-d}
L u= \sum_{i,j=1}^n D_i(a^{ij}(x) D_j u + b^i(x) u) + \sum_{i=1}^n c^i(x) D_iu + d(x)u,
\end{equation}
where the coefficients $\mathbf{A}=(a^{ij})_{i,j=1}^n$, $\vec b=(b^1,\ldots, b^n)$, $\vec c=(c^1,\ldots, c^n)$, and $d$ are measurable functions defined on $\overline{\Omega}$.
We assume that the principal coefficients $\mathbf{A}=(a^{ij})$ are defined on $\bR^n$ and satisfy the uniform ellipticity condition
\begin{equation}					\label{ellipticity-d}
\lambda \abs{\xi}^2 \le \sum_{i,j=1}^n a^{ij}(x) \xi^i \xi^j,\quad \forall \xi=(\xi^1,\ldots, \xi^n) \in \bR^n,\quad\forall x \in \bR^n
\end{equation}
and the uniform boundedness condition
\begin{equation}					\label{bdd-d}
\sum_{i,j=1}^n \,\abs{a^{ij}(x)}^2 \le \Lambda^2,\quad \forall x \in \bR^n
\end{equation}
for some positive constants $\lambda$ and  $\Lambda$.

We say that a nonnegative measurable function $\omega: (0,a) \to \bR$ is a Dini function provided that there are constants $c_1, c_2 >0$ such that
\begin{equation}				\label{eq10.31tu}
c_1 \omega(t) \le \omega(s) \le c_2 \omega(t)
\end{equation}
whenever $\tfrac12 t \le s \le  t$ and $0<t<a$ and that
\[
\int_0^t \frac{\omega(s)}s \,ds <+\infty,\quad \forall t \in (0,a).
\]
For $x\in \bR^n$ and $r>0$, we denote by $B(x,r)$ the Euclidean ball with radius $r$ centered at $x$, and denote
\[
\Omega(x,r):=\Omega \cap B(x,r).
\]
For a locally integrable function $g$ on $\Omega$, we shall say that $g$ is uniformly Dini continuous (in $\Omega$) if the function $\varrho_g: \bR_+ \to \bR$ defined by
\[
\varrho_g(r):=\sup_{x \in \Omega} \,\sup_{y, y' \in \Omega(x,r)} \abs{g(y)-g(y')}
\]
is a Dini function, while we shall say that $g$ is of \emph{Dini mean oscillation}
(in $\Omega$) if the function $\omega_g: \bR_+ \to \bR$ defined by
\begin{equation*}					
\omega_g(r):=\sup_{x\in \overline{\Omega}} \fint_{\Omega(x,r)} \,\abs{g(y)-\bar {g}_{\Omega(x,r)}}\,dy \quad \left(\;\bar g_{\Omega(x,r)} :=\fint_{\Omega(x,r)} g\;\right)
\end{equation*}
is a Dini function.
We point out that the condition \eqref{eq10.31tu} is satisfied by $\rho_g$ and also by $\omega_g$; see \cite{Y.Li2016}.
Moreover, it should be clear that if $g$ is uniformly Dini continuous, then it is of Dini mean oscillation and $\omega_g(r) \le \varrho_g(r)$.
It is worthwhile to note that if $\Omega$ is such that for any $x\in \overline{\Omega}$,
\begin{equation}					\label{cond_a}
\abs{\Omega(x,r)} \ge A_0 r^n,\quad 0< \forall r\le \diam \Omega\quad (\text{$A_0$ is a positive constant})
\end{equation}
and if $g$ is of Dini mean oscillation, then $g$ is uniformly continuous with a modulus of continuity determined by $\omega_g$.

In a recent paper \cite{Y.Li2016}, Yanyan Li raised a question whether weak solutions of
\[
\sum_{i,j=1}^n D_i(a^{ij}(x) D_j u)=0
\]
are continuously differentiable when $\mathbf{A}=(a^{ij})$ are of Dini mean oscillation.\footnote{In fact, the condition on $\mathbf{A}$ imposed by Yanyan Li was slightly stronger.}
In \cite{DK16}, the first and third named authors showed that the answer to his question is positive.
This paper is a sequel to \cite{DK16} and extends and improves results presented there.
More precisely, we show that weak solutions to \eqref{master-d} with zero Dirichlet boundary conditions are continuously differentiable up to boundary provided that the leading coefficients $\mathbf{A}$ and $\vec b$ are of Dini mean oscillation, lower order coefficients $\vec c$ and $d$ belong to $L^q$ with $q>n$, and $\partial \Omega$ has $C^{1,Dini}$ boundary.
We prove a similar result when the operator is in non-divergence form.
In \cite{Es94}, the second named author investigated (interior) weak type-$(1,1)$ estimates for solutions of
\[
\sum_{i,j=1}^n a^{ij}(x) D_{ij} u=f\;\mbox{ in }\; B(0,1),\quad u=0\;\mbox{ on }\;\partial B(0,1),
\]
and showed that if $\mathbf{A}=(a^{ij})$ belong to the class of functions with vanishing mean oscillations (VMO), then the $D^2 u$ satisfies weak type-$(1,1)$ estimates with respect to $W\,dx$.
Here $W$ is a nonnegative solution to the adjoint equation, which is a good Muckenhoupt weight as $\log W$ was proved to be in VMO, so that the associated measure $W\,dx$ is better adjusted to the equation than $dx$. Moreover, it is also shown in \cite{Es94} that the standard weak type-$(1,1)$ estimates (i.e., the estimate with $W=1$) do not hold even if $\mathbf{A}$ is uniformly continuous.
In this paper, we prove that if $\mathbf{A}$ is of Dini mean oscillation, then the standard weak type-$(1,1)$ estimates hold up to the boundary.
We also show that in this case, the weight $W$ mentioned above satisfies a Harnack type inequality.

Now, we state the main results more precisely.
\begin{definition}			
Let $\Omega \subset \bR^n$ be open and bounded, $k=1,2,\cdots$.
We say $\partial \Omega$ is $C^{k,Dini}$ if for each point $x_0 \in \partial\Omega$, there exist $r>0$ independent of $x_0$ and a $C^{k, Dini}$ function (i.e., $C^k$ function whose $k$th derivatives are uniformly Dini continuous) $\gamma: \bR^{n-1} \to \bR$ such that (upon relabeling and reorienting the coordinates axes if necessary) in a new coordinate system $(x',x^n)=(x^1,\ldots,x^{n-1},x^n)$, $x_0$ becomes the origin and
\[
\Omega \cap B(0, r)=\set{ x \in B(0, r) : x^n > \gamma(x^1, \ldots, x^{n-1})},\quad  \gamma(0')=0.
\]
\end{definition}

\begin{condition}			\label{cond-n}
The coefficients $\mathbf{A}=(a^{ij})_{i,j=1}^n$ and $\vec b=(b^1,\ldots, b^n)$ are of Dini mean oscillation in $\Omega$ and  $\vec c =(c^1,\ldots, c^n)$, $d \in L^{q}(\Omega)$ with $q>n$.
\end{condition}

\begin{theorem}					\label{thm-main-d}
Let $\Omega$ have $C^{1, Dini}$ boundary and the coefficients of $L$ in \eqref{master-d} satisfy the conditions \eqref{ellipticity-d}, \eqref{bdd-d}, and Condition~\ref{cond-n}.
Let $u \in W^{1,2}_{0}(\Omega)$ be the weak solution of
\[
Lu=\dv \vec g + f\;\mbox{ in } \; \Omega,\quad u=0\;\mbox{ on }\;\partial\Omega,
\]
where $\vec g=(g^1,\ldots, g^n)$ are of Dini mean oscillation in $\Omega$ and $f \in L^q(\Omega)$ with $q>n$.
Then, we have $u\in C^1(\overline{\Omega})$.
\end{theorem}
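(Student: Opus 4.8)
The plan is to reduce the boundary $C^1$ regularity to an interior-type argument via flattening the boundary, and then to run a Campanato/Morrey-space iteration adapted to the Dini mean oscillation hypothesis, exactly in the spirit of \cite{DK16} but now with nontrivial lower order terms and a $C^{1,Dini}$ (rather than flat) boundary.

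\textbf{Step 1: Localization and reduction to lower order terms as part of the data.} Since $u \in W^{1,2}_0(\Omega)$ and $\vec c, d \in L^q$ with $q>n$, standard local $W^{1,p}$ and then $C^\alpha$ estimates (De Giorgi--Nash--Moser together with the Sobolev embedding, using $q>n$) give $u \in C^\beta(\overline\Omega)$ for some $\beta \in (0,1)$; in particular $u$ is bounded. We then rewrite the equation as
\[
\sum_{i,j} D_i\bigl(a^{ij} D_j u\bigr) = \dv\bigl(\vec g - \vec b\, u\bigr) + \bigl(f - \vec c \cdot Du - d u\bigr) =: \dv \tilde{\vec g} + \tilde f.
\]
Here $\tilde{\vec g} = \vec g - \vec b\, u$ and $\tilde f = f - \vec c\cdot Du - d u$. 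The point is that $\vec b$ is of Dini mean oscillation and $u$ is (at worst) $C^\beta$, so $\vec b\, u$ is of Dini mean oscillation — one checks that a product of a Dini-mean-oscillation function with a bounded Hölder function is again of Dini mean oscillation, since $\omega_{\vec b u}(r) \lesssim \omega_{\vec b}(r) + r^\beta$, and the latter two are both Dini. Hence $\tilde{\vec g}$ is of Dini mean oscillation. For $\tilde f$: $\vec c\cdot Du$ is only in some $L^{p}$ a priori, but once we bootstrap and show $Du$ is bounded, $\tilde f \in L^q$ with $q>n$, which is exactly the regularity of $f$ assumed in the theorem. So morally we reduce to the homogeneous-leading-coefficient form $\dv(\mathbf A D u) = \dv \vec g + f$ with $\vec g$ of Dini mean oscillation and $f \in L^q$, $q > n$, plus a bootstrap to justify the claim $Du \in L^\infty$.

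\textbf{Step 2: Boundary flattening.} Fix $x_0 \in \partial\Omega$. Using the $C^{1,Dini}$ graph function $\gamma$, introduce the change of variables $y' = x'$, $y^n = x^n - \gamma(x')$, which maps $\Omega \cap B(0,r)$ to a region of the form $\{y^n > 0\}$ near the origin. Because $\gamma \in C^{1,Dini}$, the Jacobian of this map is $C^{0,Dini}$ (i.e. uniformly Dini continuous, hence of Dini mean oscillation), and the transformed operator has leading coefficients $\hat a^{ij}$ which are products/compositions of the original $a^{ij}$ (of Dini mean oscillation) with the $C^{0,Dini}$ Jacobian entries; these remain of Dini mean oscillation by the product lemma above and by the fact that composition with a bi-Lipschitz $C^{0,Dini}$ diffeomorphism preserves Dini mean oscillation. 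Similarly the transformed $\vec g$ is of Dini mean oscillation and the transformed $f$ stays in $L^q$. The boundary condition $u = 0$ on $\{y^n = 0\}$ is preserved. So we are reduced to proving $C^1$ up to $\{y^n=0\}$ for $\hat u \in W^{1,2}$ solving $\dv(\hat{\mathbf A} D\hat u) = \dv \hat{\vec g} + \hat f$ in $B^+ := B(0,r)\cap\{y^n>0\}$ with $\hat u = 0$ on the flat portion.

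\textbf{Step 3: The Dini mean oscillation iteration near the flat boundary.} This is the technical core and, I expect, the main obstacle — it is where the entire ``Part II'' improvement lives. Following \cite{DK16}, for each boundary point and each scale $\rho$ one freezes the coefficients, compares $\hat u$ on $B^+(0,\rho)$ to the solution $v$ of the constant-coefficient problem $\dv(\overline{\mathbf A}_\rho Dv) = 0$ in $B^+(0,\rho)$ with $v = \hat u$ on $\partial B^+(0,\rho)$ (and $v = 0$ on the flat part), which satisfies good interior/boundary $C^{1,1}$-type decay estimates since constant-coefficient solutions vanishing on a hyperplane are smooth up to it. The energy estimate for $w := \hat u - v$ controls $\|Dw\|_{L^2(B^+(0,\rho))}$ in terms of the oscillation of $\hat{\mathbf A}$ at scale $\rho$ (contributing $\omega_{\hat{\mathbf A}}(\rho)$), the mean oscillation of $\hat{\vec g}$ at scale $\rho$ (contributing $\omega_{\hat{\vec g}}(\rho)$), and the $L^q$ norm of $\hat f$ (contributing a power $\rho^{1 - n/q}$, which is summable since $q>n$). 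Defining the excess-type quantity
\[
\phi(x,\rho) := \inf_{\vec q \in \bR^n} \left( \fint_{B^+(x,\rho)} \abs{D\hat u - \vec q}^2 \right)^{1/2},
\]
one derives a decay inequality of the form $\phi(x, \kappa\rho) \le C\kappa\, \phi(x,\rho) + C\bigl(\omega_{\hat{\mathbf A}}(\rho) + \omega_{\hat{\vec g}}(\rho) + \rho^{1-n/q}\bigr)(\|D\hat u\|_{L^2} + \text{data})$ uniformly over boundary and interior points $x$. Choosing $\kappa$ small and iterating, the Dini summability $\sum_k \omega_{\hat{\mathbf A}}(\kappa^k\rho_0) + \omega_{\hat{\vec g}}(\kappa^k\rho_0) < \infty$ forces $\phi(x,\rho) \to 0$ as $\rho\to 0$, and in fact gives a uniform modulus of continuity for the limit; the limiting affine coefficients define a continuous vector field which one identifies with $D\hat u$. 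Combining the interior estimate of \cite{DK16} with this boundary estimate via a standard covering and distance-to-boundary argument yields $\hat u \in C^1(\overline{B^+})$, hence $u \in C^1(\overline\Omega)$. The bootstrap for $Du \in L^\infty$ (needed in Step 1 to absorb $\vec c\cdot Du$ into $\tilde f \in L^q$) is handled simultaneously: before pushing all the way to continuity one first obtains $Du \in L^p$ for every $p < \infty$, hence $\vec c \cdot Du \in L^{q'}$ for some $q' > n$, which is enough to run the final iteration. The genuinely delicate points, beyond bookkeeping, are: (i) verifying the product/composition stability of the Dini-mean-oscillation class carefully enough that all the transformed data stay in the class with controlled $\omega$; and (ii) establishing the constant-coefficient boundary decay estimate for $v$ with the right dependence, so that the factor $C\kappa$ in front of $\phi(x,\rho)$ can be made small — on a half-ball this uses reflection/odd extension across $\{y^n=0\}$ to reduce to the interior constant-coefficient estimate.
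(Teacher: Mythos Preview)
Your overall architecture---absorb lower-order terms into the data, flatten the boundary, run a Campanato freezing-of-coefficients iteration---matches the paper's. There is, however, a genuine gap in Step~3 that breaks the argument under the stated hypothesis.

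You define the excess $\phi$ with an $L^2$ average and propose to control the comparison $w=\hat u-v$ via the energy estimate. The energy estimate gives
\[
\|Dw\|_{L^2(B^+_\rho)} \lesssim \|(\hat{\mathbf A}-\bar{\mathbf A})D\hat u\|_{L^2(B^+_\rho)}+\|\hat{\vec g}-\bar{\vec g}\|_{L^2(B^+_\rho)}+\cdots,
\]
and the first term is bounded by $\|D\hat u\|_{L^\infty}\bigl(\fint|\hat{\mathbf A}-\bar{\mathbf A}|^2\bigr)^{1/2}\rho^{n/2}$. But the Dini mean oscillation hypothesis is an $L^1$ condition: it controls $\fint|\hat{\mathbf A}-\bar{\mathbf A}|\le\omega_{\hat{\mathbf A}}(\rho)$, not the $L^2$ average. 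Interpolating with the $L^\infty$ bound on $\hat{\mathbf A}$ only gives $\bigl(\fint|\hat{\mathbf A}-\bar{\mathbf A}|^2\bigr)^{1/2}\lesssim\omega_{\hat{\mathbf A}}(\rho)^{1/2}$, and the iteration then requires $\int_0^1\omega_{\hat{\mathbf A}}(t)^{1/2}\,\tfrac{dt}{t}<\infty$, which is strictly stronger than Dini (for example $\omega(t)=(\log\tfrac1t)^{-2}$ is Dini but $\omega^{1/2}$ is not). The same issue arises for $\hat{\vec g}$. So your scheme, as written, proves the theorem only under a square-root-Dini hypothesis.

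The paper fixes this by replacing the $L^2$ energy estimate with a \emph{weak type-$(1,1)$} estimate for the constant-coefficient Dirichlet problem (Lemma~\ref{lem02-weak11}), which yields
\[
\bigl|\{|Dw|>t\}\bigr|\lesssim t^{-1}\Bigl(\|D\hat u\|_{L^\infty}\!\int\!|\hat{\mathbf A}-\bar{\mathbf A}|+\!\int\!|\hat{\vec g}-\bar{\vec g}|\Bigr),
\]
and correspondingly defines the excess $\phi(x,r)$ with an $L^p$ average for some fixed $p\in(0,1)$; for $p<1$ the weak-$L^1$ bound on $Dw$ controls $(\fint|Dw|^p)^{1/p}$, and one gets exactly $\omega_{\hat{\mathbf A}}(\rho)$ rather than its square root in the iteration. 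A second, more minor point you will run into: at a boundary point $\bar x$ the paper uses an auxiliary excess $\varphi(\bar x,r)$ in which the infimum is taken only over vectors $q\vec e_n$ (since tangential derivatives of the constant-coefficient comparison vanish on the flat boundary); this is what makes the passage from boundary to interior estimates (your ``standard covering and distance-to-boundary argument'') go through cleanly. Finally, the paper disposes of the $f$-term by taking a Newtonian potential and folding $\nabla v$ into $\vec g'$, so that the core boundary proposition deals only with $\dv(\mathbf A\nabla u)=\dv\vec g$; your alternative of carrying $f$ through the iteration with a $\rho^{1-n/q}$ contribution is also fine, but does not avoid the $L^p$, $p<1$, issue above.
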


The proof of Theorem~\ref{thm-main-d} is given Section \ref{sec2}, where an upper bound for the modulus of continuity of $Du$ can be found.

We also consider elliptic operators $\cL$ in non-divergence form
\begin{equation}							\label{master-nd}
\cL u= \sum_{i,j=1}^n a^{ij}(x) D_{ij} u+ \sum_{i=1}^n b^i(x) D_i u + c(x)u,
\end{equation}
where the coefficients $\mathbf{A}$ are assumed to be symmetric, i.e. $a^{ij}=a^{ji}$, and satisfy the uniform ellipticity and boundedness condition
\begin{equation}					\label{ellipticity-nd}
\lambda \abs{\xi}^2 \le \sum_{i,j=1}^n a^{ij}(x) \xi^i \xi^j \le \Lambda \abs{\xi}^2,\quad \forall \xi=(\xi^1,\ldots, \xi^n) \in \bR^n,\quad \forall x \in \bR^n
\end{equation}
for some constants $0< \lambda \le \Lambda$.
\begin{condition}			\label{cond-nd}
The coefficients $\mathbf{A}=(a^{ij})_{i,j=1}^n$, $\vec b=(b^1,\ldots, b^n)$, and $c$ are of Dini mean oscillation in $\Omega$.
\end{condition}

\begin{theorem}					\label{thm-main-nd}
Let $\Omega$ have $C^{2, Dini}$ boundary and the coefficients of $\cL$ in \eqref{master-nd} satisfy the condition \eqref{ellipticity-nd} and Condition~\ref{cond-nd}.
Let $u \in W^{2,2}(\Omega)\cap W^{1,2}_0(\Omega)$ be a strong solution of
\[
\cL u= g\;\mbox{ in } \; \Omega,\quad u=0\;\mbox{ on }\;\partial\Omega,
\]
where $g$ is of Dini mean oscillation in $\Omega$.
Then, we have $u\in C^2(\overline{\Omega})$.
\end{theorem}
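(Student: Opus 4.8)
The plan is to reduce Theorem~\ref{thm-main-nd} to the interior $C^2$ estimate (established, as in \cite{DK16}, by a freezing-and-iteration scheme) via a boundary flattening, and then to run the analogous freezing argument on half-balls centred at boundary points. First note that $\mathbf A$, $\vec b$, $c$, $g$, being of Dini mean oscillation on $\Omega$ (which satisfies \eqref{cond_a}), are uniformly continuous and bounded on $\overline\Omega$; together with the $W^{2,p}$-theory for VMO coefficients this gives $u\in W^{2,p}(\Omega)$ for all $p<\infty$, hence $Du\in C^{0,\alpha}(\overline\Omega)$ for every $\alpha\in(0,1)$. Fixing $x_0\in\partial\Omega$ and using the $C^{2,Dini}$ graph $\gamma$ from the definition, straighten $\partial\Omega$ near $x_0$ by $\Phi(x',x^n)=(x',x^n-\gamma(x'))$; then $\tilde u=u\circ\Phi^{-1}$ solves a non-divergence equation $\tilde\cL\tilde u=\tilde g$ on a half-ball $B(0,r)\cap\{y^n>0\}$ with $\tilde u=0$ on $\{y^n=0\}$, where the new principal coefficients are combinations of $a^{kl}\circ\Phi^{-1}$ with coefficients built from the $C^{1,Dini}$ function $D\gamma$, the new first-order coefficients acquire a term containing the uniformly Dini continuous $D^2\gamma$, and $\tilde g=g\circ\Phi^{-1}$. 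Since Dini mean oscillation is preserved under composition with the bi-Lipschitz $C^{1,Dini}$ map $\Phi$, under multiplication by uniformly Dini continuous functions and by the Hölder function $Du$, and under addition, Condition~\ref{cond-nd} is inherited with a modulus controlled by $\omega_{\mathbf A}$, $\omega_{\vec b}$, $\omega_c$, $\omega_g$ and the Dini modulus of $D^2\gamma$; in particular the lower order terms and the right-hand side combine into a single source of Dini mean oscillation. It therefore suffices to prove that $D^2\tilde u$ is continuous up to the flat boundary (for $x_0\notin\partial\Omega$ one uses the usual interior/boundary dichotomy).

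\textbf{Freezing on half-balls.} Dropping tildes, work on $B_R^+:=B(0,R)\cap\{y^n>0\}$ with $u=0$ on $\{y^n=0\}$. For $0<r\le R$ set $\bar{\mathbf A}:=\fint_{B_r^+}\mathbf A$, $\cL_0:=\sum\bar a^{ij}D_{ij}$, and split $u=v+w$ on $B_r^+$, where $w$ solves $\cL_0 w=\tilde g$ in $B_r^+$ with $w=0$ on $\partial B_r^+$ and
\[
\tilde g:=g+\sum_{i,j}(\bar a^{ij}-a^{ij})D_{ij}u-\sum_i b^iD_iu-cu ,
\]
and $v:=u-w$, so $\cL_0 v=0$ in $B_r^+$, $v=0$ on $\{y^n=0\}\cap B_r$. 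For $v$: a linear change of variables fixing $\{y^n=0\}$ and eliminating the mixed coefficients $\bar a^{in}$ ($i<n$) makes the odd reflection of $v$ a strong solution of a constant-coefficient elliptic equation on a full ball, so each $D_{ij}v$ solves it too and inherits the excess decay
\[
\fint_{B_\rho^+}\abs{D^2v-(D^2v)_{B_\rho^+}}\le C\,\frac{\rho}{r}\fint_{B_r^+}\abs{D^2v-(D^2v)_{B_r^+}},\qquad 0<\rho\le r/2 .
\]
For $w$: the $W^{2,2}$ estimate for $\cL_0$ on the half-ball gives $\norm{D^2w}_{L^2(B_r^+)}\le C\norm{\tilde g}_{L^2(B_r^+)}$; after subtracting averages, $g$, $\vec b\cdot Du$ and $cu$ contribute terms that are small in $r$ (the constant part only adds a fixed quadratic, absorbed into the limit), while
\[
(\bar a^{ij}-a^{ij})D_{ij}u=(\bar a^{ij}-a^{ij})(D^2u)_{B_r^+}+(\bar a^{ij}-a^{ij})\bigl(D_{ij}u-(D^2u)_{B_r^+}\bigr)
\]
contributes $\omega_{\mathbf A}(r)\,\abs{(D^2u)_{B_r^+}}$ from the first summand and, from the second, a term controlled by Hölder's inequality and the self-improved (slightly super-$L^2$) integrability of $D^2u$.

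\textbf{Iteration and conclusion.} Combining the estimates for $v$ and $w$ with $\rho=\kappa r$, $\kappa$ small, yields a perturbation inequality
\[
\fint_{B_{\kappa r}^+}\abs{D^2u-(D^2u)_{B_{\kappa r}^+}}\le\tfrac12\fint_{B_r^+}\abs{D^2u-(D^2u)_{B_r^+}}+C\,\tilde\omega(r)\Bigl(\fint_{B_r^+}\abs{D^2u}+1\Bigr),
\]
with $\tilde\omega$ a Dini function depending only on the data. Iterating — and controlling $\fint_{B_r^+}\abs{D^2u}$ along the way by a telescoping sum of the oscillation quantities, as in the interior argument of \cite{DK16} — the Dini summability of $\tilde\omega$ forces $\fint_{B_r^+(x_0)}\abs{D^2u-(D^2u)_{B_r^+(x_0)}}\to0$ as $r\to0$, uniformly in $x_0$, and makes $Q(x_0):=\lim_{r\to0}(D^2u)_{\Omega(x_0,r)}$ exist. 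An equicontinuity argument matching the interior limits then gives $x_0\mapsto Q(x_0)$ continuous on $\overline\Omega$ with $D^2u=Q$ a.e., i.e.\ $u\in C^2(\overline\Omega)$, and provides a modulus of continuity for $D^2u$ in terms of $\tilde\omega$.

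The main obstacle I foresee is the term $\sum(\bar a^{ij}-a^{ij})D_{ij}u$ in $\tilde g$: Dini mean oscillation controls only the mean oscillation of $\mathbf A$, not any $L^\infty$-smallness, so this term cannot simply be absorbed, and one is forced into the bootstrap in which its error is measured against $\fint_{B_r^+}\abs{D^2u}$ on a slightly larger half-ball while that average is itself estimated by the very oscillation quantities being iterated; making this apparently circular estimate rigorous, and uniform up to the boundary, is the crux — as is verifying in the reduction step that the flattening preserves the Dini mean oscillation hypotheses with a quantitatively controlled modulus.
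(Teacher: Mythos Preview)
Your reduction to the half-ball problem via boundary flattening matches the paper, and your treatment of the homogeneous part $v$ by odd reflection after a linear change of variables is a legitimate alternative to the paper's direct tangential-differentiation argument (which exploits that $D_k v$, $k<n$, inherits the zero boundary condition and recovers $D_{nn}v$ from the equation). The genuine gap is exactly where you flag it: the estimate for $w$.

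You propose to control $w$ through the $W^{2,2}$ bound $\norm{D^2 w}_{L^2(B_r^+)}\le C\norm{\tilde g}_{L^2(B_r^+)}$ and to handle $(\bar a^{ij}-a^{ij})D_{ij}u$ by splitting off the average of $D^2u$. But the hypothesis is Dini mean oscillation in the $L^1$ sense: only $\fint_{B_r^+}\abs{\mathbf A-\bar{\mathbf A}}\le\omega_{\mathbf A}(r)$ is assumed Dini. Your first piece $(\bar a-a)(D^2u)_{B_r^+}$ has $L^2$ norm governed by $\abs{(D^2u)_{B_r^+}}\cdot\bigl(\fint\abs{\mathbf A-\bar{\mathbf A}}^2\bigr)^{1/2}$, and the $L^2$ mean oscillation of $\mathbf A$ is only $\lesssim\omega_{\mathbf A}(r)^{1/2}$ in general, which need not be Dini (take $\omega_{\mathbf A}(r)=(\log\tfrac1r)^{-2}$). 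The second piece has the same defect: pairing $\bar a-a$ in any $L^q$ norm with an oscillation of $D^2u$ still leaves a non-Dini coefficient, and no self-improvement of $D^2u$ can compensate, since the required smallness must come from $\mathbf A$. The iteration then produces a series $\sum_j\omega_{\mathbf A}(\kappa^j r)^{1/2}$ that diverges, and neither the $L^\infty$ bound on $D^2u$ nor its modulus of continuity closes.

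The paper's resolution is to replace the $L^2$ estimate for $w$ by the \emph{weak type}-$(1,1)$ estimate for constant-coefficient operators (Lemma~\ref{lem-weak11-nd}): from
\[
\bigl|\{\abs{D^2w}>t\}\bigr|\lesssim\frac{1}{t}\Bigl(\norm{D^2u}_{L^\infty(B^+(\bar x,2r))}\int_{B^+(\bar x,2r)}\abs{\mathbf A-\bar{\mathbf A}}+\int_{B^+(\bar x,2r)}\abs{g-\bar g}\Bigr)
\]
one obtains, for any fixed $p\in(0,1)$,
\[
\Bigl(\fint_{B^+(\bar x,r)}\abs{D^2w}^p\Bigr)^{1/p}\lesssim\omega_{\mathbf A}(2r)\,\norm{D^2u}_{L^\infty(B^+(\bar x,2r))}+\omega_g(2r),
\]
so the genuine $L^1$ Dini modulus $\omega_{\mathbf A}$ appears directly. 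Correspondingly the iteration quantity is $\phi(x,r)=\inf_{\mathbf q}\bigl(\fint\abs{D^2u-\mathbf q}^p\bigr)^{1/p}$ with $p<1$, not the $L^1$ or $L^2$ excess you use. The a priori finiteness of $\norm{D^2u}_{L^\infty}$ is supplied by assuming $u\in C^2(\overline B{}^+_3)$ and is bounded a posteriori by a separate absorption argument (the analogue of Lemma~\ref{lem-03}). This combination of weak-$(1,1)$ and sub-unit exponents is the device that makes the $L^1$ Dini mean oscillation hypothesis sufficient; without it one is effectively forced to assume Dini mean oscillation in the $L^2$ sense, which is the strictly stronger condition discussed in the remark following the proof in the paper.
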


The proof of Theorem~\ref{thm-main-nd} is also given Section \ref{sec2}.
The formal adjoint operator of the non-divergence operator $\mathcal L$ is defined by
\begin{equation*}
\mathcal L^\ast u =   D_{ij}(a^{ij}(x) u)-D_i(b^i(x) u)+c(x)u.
\end{equation*}
We also deal with the boundary value problem
\begin{equation}\label{master-adj}
\mathcal L^\ast u=\dv^2 \mathbf g + f\; \text{ in }\; \Omega,\quad u= \psi +\frac{\mathbf{g}\nu\cdot \nu}{\mathbf{A}\nu\cdot \nu} \; \text{ on }\; \partial\Omega,
\end{equation}
where $\mathbf g = (g^{kl})_{k,l=1}^n$ is a symmetric matrix and $\dv^2\mathbf g=\sum_{k,l=1}^n D_{kl}g^{kl}$.
At first, the appearance of the term $\frac{\mathbf{g}\nu\cdot \nu}{\mathbf{A}\nu\cdot \nu}$ as a part of boundary value may look strange, but it helps to make $\mathbf{g}$ to disappear from the boundary integral in the identity \eqref{eq13.53pde}, which formally defines a ``weak'' adjoint solution to \eqref{master-adj}; see
\cite[Definition~2]{EM2016} for more details.
\begin{definition}			
Let $\Omega \subset \bR^n$ be a bounded $C^{1,1}$ domain with unit exterior normal vector $\nu$.
Assume that  $\mathbf{g} \in L^p(\Omega)$, $f \in L^p(\Omega)$, and $\psi \in L^p(\partial\Omega)$, where $1<p<\infty$.
We say that $u \in L^p(\Omega)$ is an adjoint solution to \eqref{master-adj} if $u$ satisfies
\begin{equation}				\label{eq13.53pde}
\int_{\Omega} u\,\mathcal L v\,dx=\int_{\Omega} \tr(\mathbf{g}\,D^2 v)\, dx + \int_{\Omega} f v \,dx + \int_{\partial\Omega} \psi\, \mathbf{A} Dv \cdot \nu\, dS_x,
\end{equation}
for any $v \in W^{2,p'}(\Omega)\cap W^{1,p'}_0(\Omega)$, where $\frac{1}{p}+\frac{1}{p'}=1$. By a local adjoint solution of \eqref{master-adj}, we mean a function $u$ in $L^p_{loc}(\Omega)$ that verifies \eqref{eq13.53pde} when $v$ is in $W^{2,p'}_0(\Omega)$.
\end{definition}

\begin{condition}			\label{cond-adj}
The coefficients $\mathbf{A}=(a^{ij})_{i,j=1}^n$ are of Dini mean oscillation over an open set containing $\overline\Omega$ and $\vec b=(b^1,\ldots, b^n) \in L^q(\Omega)$, $c \in L^{\frac{q}{2}}(\Omega)$, for some $q>n$.
\end{condition}

\begin{theorem}					\label{thm-main-adj}
Let $\Omega$ have a $C^{1,1}$ boundary, the coefficients of $\cL$ in \eqref{master-nd} satisfy the condition \eqref{ellipticity-nd} and Condition~\ref{cond-adj}.
Let $u \in L^2(\Omega)$ be an adjoint solution of the problem
\begin{equation*}
\mathcal L^\ast u=\dv^2 \mathbf{g}+f\; \text{ in }\; \Omega,\quad u= \psi +\frac{\mathbf{g}\nu\cdot \nu}{\mathbf{A}\nu\cdot \nu} \; \text{ on }\; \partial\Omega,
\end{equation*}
where $\mathbf{g}$ is of Dini mean oscillation in $\Omega$, $f\in L^s(\Omega)$ with $s>\frac{n}{2}$, and $\psi \in C(\partial\Omega)$.
Then, $u\in C(\overline \Omega)$.
\end{theorem}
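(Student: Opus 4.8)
The plan is to derive a Green's-function representation for $u$ on all of $\Omega$ and to read $u\in C(\overline\Omega)$, together with the boundary values in \eqref{master-adj}, off the structure of the resulting terms. Let $G(\cdot,x)$ be the Green's function of $\mathcal L$ on $\Omega$ with pole at $x$, so $\mathcal L_yG(y,x)=\delta_x$ in $\Omega$ and $G(\cdot,x)=0$ on $\partial\Omega$. Under Condition~\ref{cond-adj} and $\partial\Omega\in C^{1,1}$ it obeys the usual estimates --- $|G(y,x)|\lesssim|x-y|^{2-n}$ and $|D_yG(y,x)|\lesssim|x-y|^{1-n}$ up to $\partial\Omega$, with an extra factor $\dist(x,\partial\Omega)/|x-y|$ once $|x-y|\gtrsim\dist(x,\partial\Omega)$ (forced by the zero boundary data), the averaged Calder\'on--Zygmund bound $|x-y|^{-n}$ for $D^2_yG(y,x)$ away from the pole, and a Dini-type modulus in the pole variable --- all consequences of the $W^{2,p}$ and $C^1$ estimates for $\mathcal L$ and its adjoint (cf.\ \cite{DK16}, see also Theorem~\ref{thm-main-d}) and the $C^{1,1}$ regularity of $\partial\Omega$. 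Testing \eqref{eq13.53pde} against the regularizations $v_\varepsilon:=\fint_{B(x,\varepsilon)}G(\cdot,z)\,dz\in W^{2,2}(\Omega)\cap W^{1,2}_0(\Omega)$, which solve $\mathcal Lv_\varepsilon=|B(x,\varepsilon)|^{-1}\chi_{B(x,\varepsilon)}$ so that the left side is $\fint_{B(x,\varepsilon)}u\to u(x)$ at Lebesgue points, and letting $\varepsilon\to0$, I would obtain, for a.e.\ $x\in\Omega$, $u(x)=\I(x)+\II(x)+\III(x)$, where $P(x,y):=\mathbf A(y)D_yG(y,x)\cdot\nu(y)$ ($y\in\partial\Omega$) is the Poisson kernel, $\II(x):=\int_\Omega f(y)G(y,x)\,dy$, $\III(x):=\int_{\partial\Omega}\psi(y)P(x,y)\,dS_y$, and $\I(x):=\lim_{\varepsilon\to0}\int_\Omega\tr\big(\mathbf g\,D^2v_\varepsilon\big)$ is the principal-value singular integral with kernel $D^2_yG(y,x)$.

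Next I would check that each term is continuous on $\Omega$. The potential $\II$ is continuous on $\overline\Omega$ with $\II=0$ on $\partial\Omega$, by the Riesz estimate $\int_\Omega|x-y|^{2-n}|f(y)|\,dy\lesssim\norm f_{L^s}$ (valid since $s>\frac n2$) and dominated convergence using $G(\cdot,x)\to0$ at the boundary. The term $\III$ is continuous in $x\in\Omega$ by dominated convergence and the continuous dependence of $D_yG(y,x)$ on the pole (again from \cite{DK16}). The term $\I$ is continuous on $\Omega$ by the Dini--Calder\'on--Zygmund mechanism of \cite{DK16}: on the dyadic balls $B$ about $x$ one decomposes $\mathbf g$ into $\mathbf g-\bar{\mathbf g}_B$ plus its averages, bounds each annular contribution of the kernel using $\fint_B|\mathbf g-\bar{\mathbf g}_B|\le\omega_{\mathbf g}(\text{radius of }B)$, and sums, using only that $\omega_{\mathbf g}$ is Dini. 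Hence $u$ coincides a.e.\ with a function continuous in $\Omega$.

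Finally I would compute the limits as $x\to x_0\in\partial\Omega$. Already $\II(x)\to0$, and $\III(x)\to\psi(x_0)$ because $P(x,\cdot)\,dS$ is a nonnegative measure of total mass tending to $1$ and concentrating at $x_0$ --- i.e.\ the Dirichlet problem for $\mathcal L$ is solvable continuously on the $C^{1,1}$ domain $\Omega$, which uses $\mathbf A\in C(\overline\Omega)$ (Dini mean oscillation plus $\partial\Omega\in C^{1,1}$ force uniform continuity), $\vec b\in L^q$, $c\in L^{q/2}$ ($q>n$), and the near-boundary decay of $G$; cf.\ \cite{EM2016}. The crux is to show $\I(x)\to\frac{\mathbf g(x_0)\nu(x_0)\cdot\nu(x_0)}{\mathbf A(x_0)\nu(x_0)\cdot\nu(x_0)}$. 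For this I would flatten $\partial\Omega$ near $x_0$ and freeze the leading coefficient at $x_0$: the difference between $\I$ and the analogous quantity built from the half-space Green's function $\Gamma_0(y-x)-\Gamma_0(y-x^\ast)$ of the frozen operator ($\Gamma_0$ its fundamental solution, $x^\ast$ the reflected pole) is continuous and vanishes at $x_0$, once more by the Dini decomposition of $\mathbf g$, while a direct boundary-layer computation shows that the frozen half-space quantity tends to $\frac{\mathbf g(x_0)\nu\cdot\nu}{\mathbf A(x_0)\nu\cdot\nu}$ as the pole approaches the boundary --- exactly the jump contributed by the image term. Summing the three limits, $u(x)\to\psi(x_0)+\frac{\mathbf g(x_0)\nu\cdot\nu}{\mathbf A(x_0)\nu\cdot\nu}=u|_{\partial\Omega}(x_0)$, which with the interior continuity gives $u\in C(\overline\Omega)$.

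The main obstacle is the singular-integral term $\I$: making the limit $\varepsilon\to0$ rigorous, proving interior continuity of $\I$ via the Dini--Calder\'on--Zygmund decomposition with these Green-function kernels, and --- hardest --- carrying out the frozen-coefficient boundary-layer analysis that produces the value $\frac{\mathbf g\nu\cdot\nu}{\mathbf A\nu\cdot\nu}$ on $\partial\Omega$. All three steps lean on the fine Green-function estimates above, in particular the near-boundary decay and the pole-variable regularity, which is where the $C^{1,1}$ structure of $\partial\Omega$ and the elliptic theory underlying \cite{DK16} and Theorem~\ref{thm-main-d} genuinely enter. The unusual boundary condition in \eqref{master-adj} is engineered precisely so that the boundary-layer jump matches it.
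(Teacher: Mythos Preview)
Your approach is genuinely different from the paper's, and while the representation-formula philosophy is natural, the boundary analysis of the singular-integral term $\I$ has a real gap.

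\textbf{What the paper does instead.} The paper never writes down a Green function. For the interior it cites \cite[Theorem~1.10]{DK16} directly (after absorbing the lower-order terms and $f$ into $\mathbf g'$ via a Newtonian potential). For the boundary it first peels off the $\psi$ contribution by a separate lemma on normalized adjoint solutions, then flattens and runs a Campanato iteration on the quantity $\phi(x,r)=\inf_{q\in\bR}\bigl(\fint_{B(x,r)\cap B_4^+}|u-q|^p\bigr)^{1/p}$. The iteration step compares $u$ with the solution $v$ of the \emph{constant-coefficient} adjoint problem on a half-ball; the key inputs are only the weak-$(1,1)$ estimate (Lemma~\ref{lem-weak11-adj}) and the boundary Lipschitz estimate for constant-coefficient adjoint solutions (Lemma~\ref{lem-bdry-Lip}). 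No pointwise Green-function or second-derivative kernel estimates are used.

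\textbf{Where your argument is incomplete.} The crux you identify --- the frozen-coefficient boundary-layer analysis of $\I$ --- is not adequately supported. To show that the difference between $\int\tr(\mathbf g\,D^2_yG(y,x))\,dy$ and its half-space frozen-coefficient analogue is continuous and vanishes at $x_0$, you need pointwise control of $D^2_yG(\cdot,x)$ near $\partial\Omega$ (and of its modulus as $x$ moves). But with only a $C^{1,1}$ boundary the available regularity for solutions of $\mathcal L v=g$ is $W^{2,p}$ for all $p$, not $C^2$ up to $\partial\Omega$; the pointwise $C^2$ theory (Theorem~\ref{thm-main-nd}) needs $\partial\Omega\in C^{2,\mathrm{Dini}}$. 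So the kernel $D^2_yG(y,x)$ is not known to obey the pointwise bounds you invoke in the comparison step. There is also a circularity hazard: the ``Dini-type modulus in the pole variable'' you appeal to for $G(y,\cdot)$ is exactly the continuity of adjoint solutions that the theorem asserts. Your citation of Theorem~\ref{thm-main-d} does not help here --- that is the divergence-form $C^1$ result, not a $C^2$ estimate for the non-divergence operator $\mathcal L$. The Campanato route in the paper sidesteps both issues: it needs only $L^p$-level information (weak-$(1,1)$) on the perturbation and a Lipschitz bound for the constant-coefficient model problem, both of which are available under the stated hypotheses.
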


The proof of Theorem~\ref{thm-main-adj} is also given in Section \ref{sec2}.
We note that in Theorem \ref{thm-main-adj}, we assume $\mathbf{A}$ and $\mathbf{g}$ are of Dini mean oscillation, and thus $\frac{\mathbf{g}\nu\cdot \nu}{\mathbf{A}\nu\cdot \nu}$ becomes a uniformly continuous function in $\partial \Omega$.
Therefore, the boundary data in the above theorem include all continuous functions defined on $\partial\Omega$.
See \cite{S1973} for previous results on interior $C^\alpha$-regularity, $0<\alpha <1$, for solutions to \eqref{master-adj} with $\mathbf{g}=0$.

In section \ref{sec3}, we provide an improvement of the weak type-$(1,1)$ estimates given in \cite{DK16}. In particular, they are shown to hold up to the boundary, while in the non-divergence case, the weak type-$(1,1)$ estimate is shown to hold without imposing further conditions on the principal coefficients $\mathbf{A}$ other than being of Dini mean oscillation over an open set containing $\overline\Omega$.

\begin{theorem}				\label{thm-weak11-d}
Let $\Omega$ have a $C^{1,Dini}$ boundary and the coefficients $\mathbf{A}=(a^{ij})$ satisfy the conditions \eqref{ellipticity-d}, \eqref{bdd-d}, and the following:
\begin{equation}					\label{eq15.08m}
\exists c>0 \;\mbox{such that }\;\omega_{\mathbf{A}}(r) \le c ( \ln r )^{-2}, \quad\forall r\in(0,\tfrac12).
\end{equation}
Assume that $\partial \Omega$ is locally represented as a graph of $C^1$ function $\gamma$ satisfying
\[
\abs{D \gamma(x) - D\gamma(y)} \le c(\ln \abs{x-y})^{-2},\quad \abs{x-y}<\tfrac12.
\]
For $\vec f =(f^1,\ldots, f^n) \in L^2(\Omega)$, let $u \in W^{1,2}_0(\Omega)$ be a unique weak solution to
\[
\sum_{i,j=1}^n D_i(a^{ij} D_j u)= \dv \vec f\;\mbox{ in }\; \Omega,\quad
u=0 \;\mbox{ on } \; \partial \Omega.
\]
Then for any $t>0$, we have
\[
\Abs{\set{x \in \Omega : \abs{Du(x)} > t}}  \le \frac{C}{t} \int_{\Omega} \abs{\vec f},
\]
where $C=C(n, \lambda, \Lambda, \omega_{\mathbf{A}}, \Omega)$.
\end{theorem}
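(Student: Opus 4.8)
The plan is to run a Calder\'on--Zygmund decomposition of $\vec f$ at height $t$, treating the ``good'' part with an energy estimate and Chebyshev, and the ``bad'' part with a localized gradient estimate for $Du$ up to the boundary. Since the operator has no lower order terms, existence and uniqueness of $u\in W^{1,2}_0(\Omega)$ is immediate from Lax--Milgram, and by scaling we may assume $\diam\Omega\le\tfrac14$. Extend $\abs{\vec f}$ by zero to $\bR^n$ and perform the dyadic Calder\'on--Zygmund stopping time at height $t$, obtaining pairwise disjoint cubes $Q_k$ with $\sum_k\abs{Q_k}\le\tfrac1t\int_\Omega\abs{\vec f}$, with $\fint_{Q_k}\abs{\vec f}\le 2^n t$, and $\abs{\vec f}\le t$ a.e.\ on $\Omega\setminus\bigcup_kQ_k$. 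Write $\vec f=\vec h+\vec b$, where $\vec h=\vec f$ off $\bigcup_kQ_k$ and $\vec h=\fint_{Q_k}\vec f$ on $Q_k$, and $\vec b=\sum_k\vec b_k$ with $\vec b_k=(\vec f-\fint_{Q_k}\vec f)\chi_{Q_k}$; then $\norm{\vec h}_{L^\infty}\le Ct$, $\norm{\vec h}_{L^1}\le C\int_\Omega\abs{\vec f}$, $\int_{Q_k}\vec b_k=0$, $\supp\vec b_k\subset Q_k\cap\Omega$, and $\norm{\vec b_k}_{L^1}\le Ct\abs{Q_k}$. Split $u=u_h+\sum_ku_k$ accordingly, each summand in $W^{1,2}_0(\Omega)$ and solving the equation with the corresponding right-hand side.

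For the good part, testing $\dv(\mathbf A Du_h)=\dv\vec h$ with $u_h$ gives $\norm{Du_h}_{L^2(\Omega)}^2\le C\norm{\vec h}_{L^2}^2\le C\norm{\vec h}_{L^\infty}\norm{\vec h}_{L^1}\le Ct\int_\Omega\abs{\vec f}$, whence $\abs{\set{\abs{Du_h}>t/2}}\le\tfrac Ct\int_\Omega\abs{\vec f}$. For the bad part, let $\mathcal D_k$ be the intersection with $\Omega$ of a fixed dilate of $Q_k$ (large enough that $Q_k$ is in its interior), and set $E=\bigcup_k\mathcal D_k$, so $\abs E\le C\sum_k\abs{Q_k}\le\tfrac Ct\int_\Omega\abs{\vec f}$. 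On $\Omega\setminus E$ we invoke the following \textbf{Key Lemma}: \emph{if $v\in W^{1,2}_0(\Omega)$ solves $\dv(\mathbf A Dv)=\dv\vec b$ with $\vec b$ supported in $\Omega(z,\rho)$ and $\int_{\Omega(z,\rho)}\vec b=0$, then $\int_{\Omega\setminus\Omega(z,K\rho)}\abs{Dv}\le C\norm{\vec b}_{L^1}$ with $C$ and $K$ independent of $z$ and $\rho$.} Granting this, $\int_{\Omega\setminus E}\abs{Du_b}\le\sum_k\int_{\Omega\setminus\mathcal D_k}\abs{Du_k}\le C\sum_k\norm{\vec b_k}_{L^1}\le Ct\sum_k\abs{Q_k}\le C\int_\Omega\abs{\vec f}$, so $\abs{\set{x\in\Omega\setminus E:\abs{Du_b(x)}>t/2}}\le\tfrac Ct\int_\Omega\abs{\vec f}$; combining with $\abs E$ and the good-part bound proves the theorem.

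It remains to prove the Key Lemma, where all the hypotheses enter. In $\Omega\setminus\Omega(z,K\rho)$ the function $v$ solves the \emph{homogeneous} equation $\dv(\mathbf A Dv)=0$ and vanishes on the portion of $\partial\Omega$ lying there. Decompose this region into dyadic shells $\Omega_m=\Omega\cap\set{2^m\rho\le\abs{x-z}<2^{m+1}\rho}$, $m_0\le m\lesssim\log(\diam\Omega/\rho)$; on a fattening of each $\Omega_m$, $v$ is a homogeneous solution with zero Dirichlet data on the boundary piece it meets. The interior $C^1$ estimates of \cite{DK16} and a quantitative form of the argument behind Theorem~\ref{thm-main-d} near $\partial\Omega$---applied after flattening $\partial\Omega$ by a $C^1$ diffeomorphism whose gradient has modulus of continuity bounded by $c(\ln\abs{\cdot})^{-2}$, which turns the equation into one on a half-ball whose coefficients are again of Dini mean oscillation with $\omega\le c(\ln\abs{\cdot})^{-2}$ and preserves the zero Dirichlet condition---give $\sup_{\Omega_m}\abs{Dv}\le C(2^m\rho)^{-1}\bigl(\fint_{\Omega'_m}\abs v^2\bigr)^{1/2}$. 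Combining this with the decay of $\abs v$ away from $\Omega(z,\rho)$, obtained from the Green function representation $v(x)=-\int D_{y_j}N(x,y)\,b^j(y)\,dy$ together with $\int\vec b=0$---so that $v(x)=-\int[D_{y_j}N(x,y)-D_{y_j}N(x,z)]\,b^j(y)\,dy$ and the bracket is controlled by the modulus of continuity of the gradient of $N(x,\cdot)$, a homogeneous solution of the adjoint equation, itself of Dini mean oscillation---one arrives at $\int_{\Omega_m}\abs{Dv}\le C\,\eta(2^{-m})\norm{\vec b}_{L^1}$, where $\eta$ is a modulus assembled from $\omega_{\mathbf A}$ and the boundary modulus. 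Summing over $m$ yields $\int_{\Omega\setminus\Omega(z,K\rho)}\abs{Dv}\le C\bigl(\sum_m\eta(2^{-m})\bigr)\norm{\vec b}_{L^1}$, and the hypotheses $\omega_{\mathbf A}(r),\ \abs{D\gamma(x)-D\gamma(y)}\le c(\ln(\cdot))^{-2}$ are precisely what make $\sum_m\eta(2^{-m})$ finite \emph{and bounded independently of $\rho$}.

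The \textbf{main obstacle} is exactly this last point together with the boundary $C^1$ estimate feeding it: one must prove the gradient bound $\sup\abs{Dv}\lesssim(2^m\rho)^{-1}\bigl(\fint\abs v^2\bigr)^{1/2}$ for homogeneous solutions with zero Dirichlet data \emph{up to a $C^1$ boundary whose normal has only logarithmic modulus of continuity}, and track all constants through the flattening so that nothing degrades as $m\to\infty$ (equivalently as $\rho\to0$). The quantitative book-keeping---showing $\sum_m\eta(2^{-m})\le C$ with $C$ independent of the scale $\rho$ of the Calder\'on--Zygmund cube---is where the specific rate $(\ln r)^{-2}$, rather than mere Dini mean oscillation, is needed, both for $\mathbf A$ and for $\partial\Omega$; away from the boundary this is already contained in \cite{DK16}, and the genuinely new work is the uniform boundary version and the verification that the flattening respects the $(\ln r)^{-2}$ bound.
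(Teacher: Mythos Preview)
Your overall architecture is correct and matches the paper's: a Calder\'on--Zygmund decomposition, the good part handled by the $L^2$ energy estimate and Chebyshev, and the bad part reduced to a H\"ormander-type Key Lemma whose proof rests on the boundary $C^1$ oscillation estimate \eqref{eq08.33st} developed in Section~\ref{sec2}. Your identification of the $(\ln r)^{-2}$ hypothesis as precisely what makes the dyadic sum over shells bounded independently of the stopping scale $\rho$ is also right, and is exactly the computation the paper performs: one gets a contribution $2^{-\beta m}$ (summable) plus a scale-independent term $\omega^*_{\mathbf A}(\rho)\lesssim(\ln(4/\rho))^{-1}$ repeated $N\sim\ln(1/\rho)$ times, whose total is $O(1)$.

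Where you and the paper diverge is in the proof of the Key Lemma. You propose a direct route: represent $u_k$ via the Green function, exploit $\int\vec b_k=0$ to replace $D_yN(x,y)$ by its oscillation over the small ball, bound that oscillation by \eqref{eq08.33st} applied to $N(x,\cdot)$ (an adjoint solution), and then pass from $|u_k|$ to $|Du_k|$ on each shell by a local boundary Lipschitz estimate. The paper instead argues by \emph{duality}: for each dyadic annulus it introduces an adjoint solution $v$ with data $\vec g$ supported in that annulus, writes
\[
\int_\Omega Du_k\cdot\vec g=\int_{\Omega(x_0,r)}\vec b_k\cdot\bigl(Dv-\overline{Dv}_{\Omega(x_0,r)}\bigr),
\]
and controls the right-hand side by the oscillation of $Dv$ on the small ball (again via \eqref{eq08.33st}) together with the energy bound $\norm{Dv}_{L^2}\lesssim\norm{\vec g}_{L^2}$. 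Dualizing back and summing over annuli gives the Key Lemma.

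Both routes rely on the same analytic core---the boundary $C^1$ oscillation estimate after flattening---and yield the same final summation. The paper's duality argument is more economical: it uses \eqref{eq08.33st} once and needs no Green-function machinery, only the energy inequality. Your approach is valid but requires as additional inputs the existence and pointwise bounds $|N(x,y)|\lesssim|x-y|^{2-n}$ and $|D_yN(x,y)|\lesssim|x-y|^{1-n}$ up to the boundary (the latter itself a consequence of the $C^1$ theory), and the two-step structure invokes the boundary $C^1$ estimate in two separate places. If you want to streamline, the duality trick lets you bypass the Green function entirely.
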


A similar result can be proved for the adjoint problem
\[
\sum_{i,j=1}^n D_{ij}(a^{ij} u) = \dv^2 \mathbf{g}\;\mbox{ in }\; \Omega,\quad
u=\frac{\mathbf{g}\nu\cdot\nu}{\mathbf{A}\nu\cdot\nu} \;\mbox{ on } \; \partial \Omega.
\]
The statement and its proof are similar to those of Theorem \ref{thm-weak11-d} and omitted.
\begin{theorem}				\label{thm-weak11-nd}
Let $\Omega$ have a $C^{1,1}$ boundary, the coefficients $\mathbf{A}=(a^{ij})$ have Dini mean oscillations over an open set containing $\overline\Omega$ and satisfy the condition \eqref{ellipticity-nd}.
For $f \in L^2(\Omega)$, let $u \in W^{2,2}(\Omega) \cap W^{1,2}_0(\Omega)$ be the unique solution to
\begin{equation}\label{E: nodivergenete}
\sum_{i,j=1}^n a^{ij} D_{ij} u= f\;\mbox{ in }\; \Omega,\quad
u=0 \;\mbox{ on } \; \partial \Omega.
\end{equation}
Then for any $t>0$, we have
\begin{equation}				\label{eq15.08wh}
\Abs{\set{x \in \Omega : \abs{D^2u(x)} > t}}  \le \frac{C}{t} \int_{\Omega}\, \abs{f}\,dx,
\end{equation}
where $C=C(n, \lambda, \Lambda, \omega_{\mathbf A}, \Omega)$.
\end{theorem}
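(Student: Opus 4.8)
The plan is to combine a Calder\'on--Zygmund decomposition of $f$ at height $t$ with a H\"ormander-type cancellation estimate for the second $x$-derivatives of the Green's function of $\mathcal L$ on $\Omega$. Three ingredients will be used as black boxes: (i) the $L^2\to W^{2,2}$ solvability estimate $\|D^2 v\|_{L^2(\Omega)}\lesssim\|h\|_{L^2(\Omega)}$ for the Dirichlet problem $\sum a^{ij}D_{ij}v=h$ in $\Omega$, $v=0$ on $\partial\Omega$, available since $\partial\Omega\in C^{1,1}$ and $\mathbf A\in\mathrm{VMO}$ (Dini mean oscillation implies $\omega_{\mathbf A}(r)\to 0$); (ii) the standard pointwise estimates for the Green's function $G(x,y)$ of this problem on a $C^{1,1}$ domain, namely the size bound together with its distance-to-the-boundary refinement and the gradient bound $\abs{D_yG(x,y)}\lesssim\abs{x-y}^{1-n}$ (with the analogous boundary factors); and (iii) the quantitative interior $C^{1,1}$ estimate for solutions of $\mathcal L w=0$, $\|D^2 w\|_{L^\infty(B_r(z))}\lesssim r^{-2}\sup_{B_{2r}(z)}\abs{w}$, which holds precisely because $\mathbf A$ is of Dini mean oscillation.

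Given $t>0$, one decomposes $f=g+\sum_k b_k$ by a stopping-time argument on dyadic cubes intersected with $\Omega$, so that $\|g\|_{L^\infty}\lesssim t$, $\|g\|_{L^1}\lesssim\|f\|_{L^1}$, each $b_k$ is supported in $Q_k\cap\Omega$ with $\|b_k\|_{L^1}\lesssim t\abs{Q_k}$, $\sum_k\abs{Q_k}\lesssim t^{-1}\|f\|_{L^1}$, $\sum_k\|b_k\|_{L^1}\lesssim\|f\|_{L^1}$, and $\int b_k=0$ for cubes $Q_k$ well inside $\Omega$. Writing $u=u_g+\sum_k u_{b_k}$ for the corresponding zero-boundary-data solutions, the good part is immediate: $\|D^2 u_g\|_{L^2}\lesssim\|g\|_{L^2}\le\|g\|_{L^\infty}^{1/2}\|g\|_{L^1}^{1/2}\lesssim(t\|f\|_{L^1})^{1/2}$, so $\abs{\{\abs{D^2 u_g}>t/2\}}\lesssim t^{-1}\|f\|_{L^1}$ by Chebyshev.

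For the bad part, put $E:=\bigl(\bigcup_k\kappa Q_k\cap\Omega\bigr)\cup\{x\in\Omega:\dist(x,\partial\Omega)<\kappa_0 t^{-1}\|f\|_{L^1}\}$; since $\partial\Omega$ is Lipschitz, $\abs{E}\lesssim t^{-1}\|f\|_{L^1}$, and it suffices to show $\int_{\Omega\setminus E}\abs{D^2 u_{b_k}}\,dx\lesssim\|b_k\|_{L^1}$ for every $k$, after which summation and Chebyshev on $\Omega\setminus E$ conclude the proof. For $x\in\Omega\setminus\overline{Q_k}$ one has $D^2_x u_{b_k}(x)=\int_{Q_k\cap\Omega}\bigl(D^2_x G(x,y)-D^2_x G(x,\xi_k)\bigr)b_k(y)\,dy$, where $\xi_k$ is the center of $Q_k$ for interior cubes (the subtraction being legitimate by $\int b_k=0$) and $\xi_k\in\partial\Omega\cap\overline{Q_k}$, so $G(x,\xi_k)\equiv0$, for boundary cubes; hence everything reduces to the H\"ormander-type bound $\int_{\Omega\setminus E}\abs{D^2_x G(x,y)-D^2_x G(x,\xi_k)}\,dx\lesssim1$, uniformly for $y\in Q_k\cap\Omega$. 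For interior cubes this is routine: on the dyadic annulus $\abs{x-\xi_k}\asymp R\gg\ell_k$ the function $z\mapsto G(z,y)-G(z,\xi_k)$ satisfies $\mathcal L_z(\cdot)=0$ and is $O(\ell_k R^{1-n})$ by the mean value theorem and the gradient bound, so ingredient (iii) gives $\abs{D^2_x G(x,y)-D^2_x G(x,\xi_k)}\lesssim\ell_k R^{-n-1}$, whose integral over $R\gtrsim\ell_k$ is $\lesssim1$, and the collar in $E$ removes the part of the annulus too close to $\partial\Omega$ for the interior estimate to apply. For boundary cubes, the role of the $y$-cancellation is instead played by the boundary-decay factor $\dist(\xi_k,\partial\Omega)/\abs{x-y}\lesssim\ell_k/R$ in the bound for $\abs{G(\cdot,y)}$, which turns the otherwise logarithmically divergent estimate $\abs{D^2_x G(x,y)}\lesssim R^{-n}$ into $\abs{D^2_x G(x,y)}\lesssim\ell_k R^{-n-1}$ on $\Omega\setminus E$.

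I expect the behavior near $\partial\Omega$ to be the main obstacle. A $C^{1,1}$ domain is not enough to make the second derivatives of $\mathcal L$-harmonic functions vanishing on $\partial\Omega$ bounded up to the boundary --- flattening produces a lower-order coefficient that is only $L^\infty$ --- so $D^2_x G(x,y)$ genuinely blows up as $x\to\partial\Omega$, and the naive H\"ormander estimate simply cannot be run there. The remedy is to keep the estimate quantitative in $d:=\dist(x,\partial\Omega)$: in each annulus $\abs{x-\xi_k}\asymp R$ one slices by $d\asymp 2^{-m}R$ with $2^{-m}R\gtrsim t^{-1}\|f\|_{L^1}$, bounds $\abs{D^2_x u_{b_k}}$ on each slice by the interior $C^{1,1}$ estimate at scale $d$ together with the boundary decay of $G$, and checks that the resulting double series over $(m,R)$ sums to $\lesssim\|b_k\|_{L^1}$ --- the scales below the cutoff being harmlessly absorbed into the collar. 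Verifying this summation without a logarithmic loss, especially for the cubes that straddle $\partial\Omega$, is the delicate point. An alternative route that sidesteps it is to first establish weak type-$(1,1)$ with respect to the adjoint weight $W$ as in \cite{Es94} (where only $\mathbf A\in\mathrm{VMO}$ is needed, and $W$ is tailored so that the H\"ormander condition holds up to $\partial\Omega$), and then to invoke the Harnack inequality for nonnegative adjoint solutions to conclude that $W$ is bounded above and below on $\overline\Omega$, so that the $W\,dx$- and $dx$-estimates coincide.
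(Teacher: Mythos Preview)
Your primary strategy---a direct H\"ormander bound for $D^2_xG(x,y)$ combined with collar removal near $\partial\Omega$---does not close as written, and the obstacle you yourself flag is real. On a $C^{1,1}$ domain with merely Dini-mean-oscillation coefficients there is no uniform control of $D^2_xG(\cdot,y)$ up to the boundary; the interior $C^{1,1}$ estimate at scale $d=\dist(x,\partial\Omega)$ together with the size/gradient bounds on $G$ gives, on the annulus $\abs{x-\xi_k}\asymp R$ and the slice $d\asymp 2^{-m}R$, a contribution of order $2^{m}\ell_k/R$, and summing first in $m\le\log(R/d_0)$ and then in $R$ produces a factor $\ell_k/d_0$ times a logarithm rather than an absolute constant. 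The extra boundary-decay factor you invoke for ``boundary cubes'' accounts for decay in the $y$-variable, not in $x$; it does not repair the $x$-side blow-up. In short, the double series does not sum without a structural replacement for boundary $C^{1,1}$ regularity.

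The ``alternative route'' you sketch in your last paragraph is the correct one, and it is precisely how the paper proceeds---though the implementation is slightly different from what you describe. Rather than first proving a weak-$(1,1)$ estimate with respect to $W\,dx$ and then transferring it via Harnack, the paper builds $W$ directly into the Calder\'on--Zygmund decomposition: on each selected cube $Q_l$ one sets $g=\tilde m_l(f):=W(Q_l)^{-1}\int_{Q_l}fW$ and $b_l=\chi_{Q_l}(f-\tilde m_l(f))$, so that the cancellation is $\int_{Q_l}b_l\,W=0$. The bad part is then handled by duality: for $\mathbf h$ supported in an annulus one solves the adjoint problem $D_{ij}(a^{ij}v_l)=\dv^2\mathbf h$, $v_l=0$ on $\partial\Omega$, and works with the \emph{normalized} adjoint solution $\tilde v_l:=v_l/W$. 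The crucial point is that normalized adjoint solutions enjoy interior and boundary $C^\alpha$ and $L^\infty$ bounds (Bauman; Fabes--Garofalo--Mar\'in-Malave--Salsa) with constants depending only on $n,\lambda,\Lambda$---no regularity of $\mathbf A$ beyond ellipticity is needed here. Writing $\int \tr(D^2 w_l\,\mathbf h)=\int_{Q_l}(\tilde v_l-(\tilde v_l)_{x_l,r_l})W\,b_l$ and using these $C^\alpha$ estimates yields the annulus bound $\|D^2 w_l\|_{L^1(\Omega(x_l,2R)\setminus B(x_l,R))}\lesssim (r_l/R)^\alpha\|b_l\|_{L^1}$, which sums geometrically. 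The Dini assumption enters the argument at exactly one place: it gives the local upper bound $\sup_{Q_l}W\lesssim\fint_{Q_l}W$ (a consequence of the interior continuity estimate for adjoint solutions), which ensures $\abs{\tilde m_l(f)}\lesssim\fint_{Q_l}\abs{f}$ and hence that $\abs{g}\lesssim t$ and $\int_{Q_l}\abs{b_l}\lesssim t\abs{Q_l}$, so the good-part estimate goes through unchanged. The full Harnack inequality for $W$ is proved in the paper but is not needed for the weak-$(1,1)$ bound itself.
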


We recall in Remark~\ref{R: 1} the previously known interior weak type-$(1,1)$ properties for solutions to \eqref{E: nodivergenete} and sketch out how to extend them up to the boundary, when the leading coefficients matrix $\mathbf{A}$ is only in VMO over an open set containing $\overline\Omega$.
We also explain why Theorem~\ref{thm-weak11-nd} is optimal for its comparison with counterexamples in \cite[\S 3]{Es94}.

The paper is organized as follows.
In Section~\ref{sec2} we provide some preliminary lemmas and propositions and the proofs of Theorems \ref{thm-main-d}, \ref{thm-main-nd}, and \ref{thm-main-adj}.
Section~\ref{sec3} is devoted to the proof of Theorems~\ref{thm-weak11-d} and \ref{thm-weak11-nd}.
Section~\ref{appendinx1} is an appendix where we outline for the reader's convenience a complete proof of Lemma~\ref{lem01-stein}, which is standard, and a proof of Lemma~\ref{lem8.06}, where a Harnack type inequality for nonnegative adjoint solutions are presented.
Lemma~\ref{lem01-stein} is used in the proofs of Theorems \ref{thm-weak11-d}, \ref{thm-weak11-nd} and of Lemmas \ref{lem02-weak11}, \ref{lem-weak11-nd}, and \ref{lem-weak11-adj}.

Finally, a few remarks are in order.
Theorems~\ref{thm-main-d}, \ref{thm-main-nd}, and \ref{thm-weak11-d} are easily extended to elliptic systems since their proofs do not use any scalar structure.
The same is true for Theorem~\ref{thm-main-adj} if we keep $\psi \equiv 0$ there.
In Theorem \ref{thm-main-d} (resp. Theorem \ref{thm-main-nd}), instead of assuming zero Dirichlet data, we may assume $u=\psi$ on $\partial\Omega$ with $\psi \in C^{1,Dini}(\overline{\Omega})$ (resp. $\psi \in C^{2,Dini}(\overline{\Omega})$).
Finally, the conditions on lower order terms in Theorems \ref{thm-main-d}, and \ref{thm-main-adj} can be relaxed a little.
For example, in Theorem \ref{thm-main-d} we may assume that $\vec c$, $d$, and $f$ belong to suitable Morrey-Campanato spaces instead of $L^p$ spaces.
\section{Proof of Theorems \ref{thm-main-d}, \ref{thm-main-nd}, and \ref{thm-main-adj}}			\label{sec2}
We write $x=(x^1,\ldots, x^n)=(x', x^n)$.
Hereafter, we shall denote
\[
B^{+}(0,r)=B(0,r) \cap \set{x^n >0}\quad\text{and}\quad  T(0,r)=B(0,r) \cap \set{x^n=0}.
\]
We fix a smooth (convex) domain $\cD$ satisfying $B^{+}(0,\frac12) \subset \cD \subset B^{+}(0,1)$ so that $\partial \cD$ contains a flat portion $T(0, \frac12)$.
For $\bar x \in \partial \bR^n_{+}=\set{x^n=0}$, we then set
\[
B^{+}(\bar x,r)=B^{+}(0,r)+ \bar x,\quad T(\bar x,r)=T(0,r)+\bar x,\quad\text{and}\quad \cD(\bar x,r)= r \cD+ \bar x.
\]
Throughout the rest of paper, we adopt the usual summation convention over repeated indices.
Also, for nonnegative (variable) quantities $A$ and $B$, the relation $A \lesssim B$ should be understood that there is some constant $c>0$ such that $A \le c B$.
We write $A \approx B$ if $A \lesssim B$ and $B \lesssim A$.

\subsection{Preliminary lemmas}
\begin{lemma}				\label{lem00}
Let $\Omega$ satisfy the condition \eqref{cond_a}.
If $f$ is uniformly Dini continuous and $g$ is of Dini mean oscillation in $\Omega$, then $fg$ is of Dini mean oscillation in $\Omega$.
\end{lemma}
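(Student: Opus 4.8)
The plan is to dominate $\omega_{fg}$ pointwise by an explicit Dini function built from $\varrho_f$ and $\omega_g$, and then invoke the general fact (from \cite{Y.Li2016}) that the mean oscillation modulus $\omega_{fg}$ automatically satisfies \eqref{eq10.31tu}, so that only the integrability half of the Dini condition has to be checked.

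First I would record that both $f$ and $g$ are bounded on $\Omega$: since $f$ is uniformly Dini continuous it is uniformly continuous, and since $\Omega$ satisfies \eqref{cond_a} and $g$ is of Dini mean oscillation, $g$ is uniformly continuous as well; being uniformly continuous on the bounded set $\Omega$, each is bounded, and I set $M_f=\norm{f}_{L^\infty(\Omega)}$, $M_g=\norm{g}_{L^\infty(\Omega)}$. This boundedness of $g$ is the one place where hypothesis \eqref{cond_a} is genuinely used, and it is the only delicate point in the argument.

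Next, fix $x\in\overline\Omega$ and $r>0$, write $A=\Omega(x,r)$, and use the algebraic identity
\[
fg-\bar f_A\,\bar g_A=f\,(g-\bar g_A)+\bar g_A\,(f-\bar f_A).
\]
Since $\bar f_A$ is an average of values of $f$ on $A$, for every $y\in A$ one has $\abs{f(y)-\bar f_A}\le \fint_A\abs{f(y)-f(z)}\,dz\le \varrho_f(r)$; combining this with $\abs{f}\le M_f$, $\abs{\bar g_A}\le M_g$, and $\fint_A\abs{g-\bar g_A}\le\omega_g(r)$ gives
\[
\fint_A \abs{fg-\bar f_A\,\bar g_A}\,dy\le M_f\,\omega_g(r)+M_g\,\varrho_f(r).
\]
Using that $\fint_A\abs{h-\bar h_A}\le 2\fint_A\abs{h-c}$ for any constant $c$ and taking the supremum over $x\in\overline\Omega$, I obtain
\[
\omega_{fg}(r)\le 2M_f\,\omega_g(r)+2M_g\,\varrho_f(r)=:\Phi(r).
\]

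Finally, $\Phi$ is a Dini function, being a finite linear combination of the Dini functions $\omega_g$ and $\varrho_f$: the defining integral is additive, and the property \eqref{eq10.31tu} passes to finite sums (with constants $\min$ and $\max$ of the individual ones). In particular $\int_0^t\Phi(s)/s\,ds<\infty$ for every $t$. Since $0\le\omega_{fg}\le\Phi$, also $\int_0^t\omega_{fg}(s)/s\,ds<\infty$, and $\omega_{fg}$ satisfies \eqref{eq10.31tu} by \cite{Y.Li2016}; hence $\omega_{fg}$ is a Dini function, i.e. $fg$ is of Dini mean oscillation in $\Omega$. Everything outside the boundedness of $g$ is the standard ``product'' bookkeeping for mean oscillations.
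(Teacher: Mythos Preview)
Your proof is correct and follows essentially the same approach as the paper: bound $\omega_{fg}(r)$ pointwise by a linear combination of $\omega_g(r)$ and $\varrho_f(r)$ using $\norm{f}_{L^\infty}$ and $\norm{g}_{L^\infty}$, then conclude. The only cosmetic difference is in the splitting: the paper subtracts $\overline{fg}_{\Omega(x,r)}$ directly and writes $fg-\overline{fg}=(fg-f\bar g)+(f\bar g-\overline{fg})$, bounding the second term via $\abs{f(y)\bar g-\overline{fg}}=\abs{\fint(f(y)-f(z))g(z)\,dz}\le\varrho_f(r)\fint\abs{g}$, which yields $\omega_{fg}(r)\le\norm{f}_{L^\infty}\omega_g(r)+\norm{g}_{L^\infty}\varrho_f(r)$ without the factor $2$ you pick up from comparing to $\bar f_A\bar g_A$. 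Your version has the advantage of making explicit why $\norm{g}_{L^\infty}<\infty$ (via condition~\eqref{cond_a}) and why $\omega_{fg}$ inherits property~\eqref{eq10.31tu}, points the paper leaves implicit.
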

\begin{proof}
For any $x\in \overline\Omega$ and $r>0$, we have
\begin{align*}
\fint_{\Omega(x,r)} \,\Abs{fg-\overline{fg}_{\Omega(x,r)}}
&\le \fint_{\Omega(x,r)} \,\Abs{fg-f\,\bar{g}_{\Omega(x,r)}}
+\fint_{\Omega(x,r)} \,\Abs{f\,\bar{g}_{\Omega(x,r)}-\overline{fg}_{\Omega(x,r)}}\\
&\le \sup_{\Omega(x,r)} f \cdot \omega_g(r) + \varrho_f(r) \cdot \fint_{\Omega(x,r)} \abs{g},
\end{align*}
where we used
\[
\sup_{\Omega(x,r)}\, \Abs{f\,\bar{g}_{\Omega(x,r)}-\overline{fg}_{\Omega(x,r)}}
\le \varrho_f(r) \cdot \fint_{\Omega(x,r)} \abs{g}.
\]
Therefore, we get
\begin{equation}				\label{eq11.39w}
\omega_{fg}(r) \le \norm{f}_{L^\infty(\Omega)}\, \omega_g(r)+\norm{g}_{L^\infty(\Omega)}\,\varrho_f(r)
\end{equation}
and thus $\omega_{fg}$ is a Dini function.
\end{proof}

\begin{lemma}			\label{lem02-weak11}
Let $\bar{\mathbf{A}}=(\bar a^{ij})$ be a constant matrix satisfying \eqref{ellipticity-d} and \eqref{bdd-d}.
For $\vec f \in L^2(\cD)$ let $u \in  W^{1,2}_0(\cD)$ be a unique weak solution to
\[
\sum_{i,j=1}^n D_i (\bar a^{ij} D_j u)= \dv \vec f\;\mbox{ in }\; \cD;\quad
u=0 \;\mbox{ on } \; \partial \cD.
\]
Then for any $t>0$, we have
\[
\Abs{\set{x \in \cD : \abs{Du(x)} > t}}  \le \frac{C}{t} \int_{\cD}\, \abs{\vec f},
\]
where $C=C(n, \lambda, \Lambda, \cD)$.
\end{lemma}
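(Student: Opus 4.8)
The plan is to recognise the solution operator $T\colon \vec f\mapsto Du$ as a Calder\'on--Zygmund operator on the fixed smooth bounded domain $\cD$, verify that it is bounded on $L^2(\cD)$ and given off the diagonal by integration against a kernel with the standard size and smoothness bounds, and then conclude via the abstract weak type-$(1,1)$ statement recorded in this paper as Lemma~\ref{lem01-stein}. As a preliminary reduction, since $\bar{\mathbf{A}}$ is constant and $D_{ij}u$ is symmetric, only the symmetric part of $\bar{\mathbf{A}}$ enters $\dv(\bar{\mathbf{A}}Du)=\sum_{i,j}\bar a^{ij}D_{ij}u$, so we may take $\bar{\mathbf{A}}$ symmetric and then, via the linear change of variables $x\mapsto\bar{\mathbf{A}}^{-1/2}x$, reduce to the case $\bar{\mathbf{A}}=I$; this replaces $\cD$ by an affine (hence still smooth, still flat-portioned) image of itself and costs only constants depending on $n,\lambda,\Lambda$.

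Two ingredients are then needed. First, $L^2$-boundedness: testing the equation with $u$ and using ellipticity gives $\lambda\norm{Du}_{L^2(\cD)}^2\le\int_\cD \bar{\mathbf{A}}Du\cdot Du=\int_\cD\vec f\cdot Du\le\norm{\vec f}_{L^2(\cD)}\norm{Du}_{L^2(\cD)}$, hence $\norm{Du}_{L^2(\cD)}\le\lambda^{-1}\norm{\vec f}_{L^2(\cD)}$, so $T$ is bounded on $L^2(\cD)$. Second, the kernel representation: let $G(x,y)$ be the Green function of $-\Delta$ on $\cD$ with zero Dirichlet data. For $\vec f\in C^\infty_c(\cD)$ one has $u(x)=\int_\cD\nabla_y G(x,y)\cdot\vec f(y)\,dy$, and differentiating,
\[
D_iu(x)=\mathrm{p.v.}\int_\cD D_{x_i}\nabla_y G(x,y)\cdot\vec f(y)\,dy+c_{ij}f^j(x),
\]
where the $c_{ij}$ are constants coming from the local singularity of the Newtonian potential (the resulting bounded multiplication operator is trivially of weak type-$(1,1)$). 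Writing $G=\Gamma-h_x$ with $\Gamma$ the Newtonian potential and $h_x$ the harmonic corrector satisfying $h_x=\Gamma(x-\cdot)$ on $\partial\cD$, classical interior and boundary Schauder estimates for $h_x$ on the smooth domain $\cD$ give, for $x\ne y$ in $\overline\cD$,
\[
\abs{\nabla_x\nabla_y G(x,y)}\lesssim\abs{x-y}^{-n},\qquad\abs{\nabla_x^2\nabla_y G(x,y)}+\abs{\nabla_x\nabla_y^2 G(x,y)}\lesssim\abs{x-y}^{-n-1},
\]
with constants depending on $\cD$; in particular the matrix kernel $K(x,y):=\nabla_x\nabla_y G(x,y)$ and its transpose satisfy the H\"ormander cancellation condition.

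With these in hand, Lemma~\ref{lem01-stein} applies: a linear operator bounded on $L^2(\cD)$ which off the diagonal is integration against a kernel obeying the above size and regularity bounds is of weak type-$(1,1)$, with norm controlled by the $L^2\to L^2$ norm and the kernel constants; adding the bounded multiplication term preserves this, and we obtain $\abs{\set{x\in\cD:\abs{Du(x)}>t}}\le\frac{C}{t}\int_\cD\abs{\vec f}$ with $C=C(n,\lambda,\Lambda,\cD)$. The technical heart — and the step I expect to be the main obstacle — is the derivative estimate for $G$ up to $\partial\cD$, i.e.\ the uniform boundary regularity of the corrector $h_x$ on the smooth domain; this is classical, but if one prefers to avoid Green functions entirely it can be replaced by a localisation argument combining the explicit whole-space fundamental solution in the interior, odd reflection across the flat portion $T(0,\tfrac12)$ of $\partial\cD$, and a finite atlas of boundary-flattening diffeomorphisms together with a freeze-and-perturb argument near the curved part of $\partial\cD$.
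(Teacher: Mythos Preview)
Your proposal is correct and follows the same high-level route as the paper: establish $L^2$-boundedness of $T:\vec f\mapsto Du$ and verify the abstract H\"ormander-type hypothesis \eqref{eq14.58w} of Lemma~\ref{lem01-stein}, then conclude. The paper's own proof is simply a reference to \cite[Lemma~2.2]{DK16} together with Lemma~\ref{lem01-stein}.

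Where you differ slightly is in how \eqref{eq14.58w} is verified. You go through pointwise Green-function kernel bounds $|\nabla_x\nabla_y G|\lesssim|x-y|^{-n}$, $|\nabla_x\nabla_y^2 G|\lesssim|x-y|^{-n-1}$ on the smooth domain $\cD$, from which the cancellation condition for mean-zero atoms follows in the usual way. The argument pattern used elsewhere in this paper (see the proof of Theorem~\ref{thm-weak11-d}) instead verifies \eqref{eq14.58w} by duality: pair $Du$ against a test $\vec g$ supported in an annulus, pass to the adjoint solution $v$, and use that $Dv$ has a quantitative modulus of continuity on the inner ball because the adjoint equation has constant coefficients there. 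For constant $\bar{\mathbf A}$ these two routes are essentially the same computation in different clothing --- the Green-function derivative bounds encode exactly the regularity of $v$ used in the duality argument --- so either is acceptable. One small remark: Lemma~\ref{lem01-stein} is not stated directly in terms of kernel bounds, so you should explicitly pass from your kernel estimates to \eqref{eq14.58w} (one line, as you indicate via the H\"ormander condition); and if you do reduce to $\bar{\mathbf A}=I$ by a linear change of variables, note that the transformed domain depends on $\bar{\mathbf A}$ but only through a bi-Lipschitz map with constants in $[\lambda,\Lambda]$, so the Green-function constants remain controlled by $n,\lambda,\Lambda,\cD$.
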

\begin{proof}
See proof of  \cite[Lemma 2.2]{DK16} and Lemma~\ref{lem01-stein}.
\end{proof}

\begin{lemma}			\label{lem-weak11-nd}
Let $\bar{\mathbf{A}}=(\bar a^{ij})$ be a constant symmetric matrix satisfying \eqref{ellipticity-nd}.
For $f \in L^2(\cD)$ let $u \in W^{2,2}(\cD)\cap W^{1,2}_0(\cD)$ be a unique solution to
\[
\sum_{i,j=1}^n \bar a^{ij} D_{ij} u= f\;\mbox{ in }\; \cD;\quad
u=0 \;\mbox{ on } \; \partial \cD.
\]
Then for any $t>0$, we have
\[
\Abs{\set{x \in \cD : \abs{D^2 u(x)} > t}}  \le \frac{C}{t} \int_{\cD}\, \abs{f},
\]
where $C=C(n, \lambda, \Lambda, \cD)$.
\end{lemma}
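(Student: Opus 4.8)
The plan is to realize $f\mapsto D^2u$ as a Calder\'on--Zygmund operator on $\cD$ and to conclude via Lemma~\ref{lem01-stein}, which provides the weak-$(1,1)$ bound for an $L^2$-bounded integral operator whose kernel obeys the standard Calder\'on--Zygmund estimates. First I would dispose of the matrix $\bar{\mathbf A}$ by a linear change of variables: writing $\bar{\mathbf A}=S^2$ with $S=\bar{\mathbf A}^{1/2}$ symmetric and $\sqrt\lambda\,I\le S\le\sqrt\Lambda\,I$, the substitution $v(y):=u(Sy)$ turns the equation into $\Delta v=\tilde f$ in $\tilde\cD:=S^{-1}\cD$, $v=0$ on $\partial\tilde\cD$, with $\tilde f(y)=f(Sy)$. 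Since $S^{-1}$ carries hyperplanes to hyperplanes, $\partial\tilde\cD$ is again $C^\infty$ and still contains a flat portion (the image of $T(0,\tfrac12)$, which a rotation makes horizontal); and as $S$ ranges over a compact set of matrices, the convexity and $C^\infty$ character of $\tilde\cD$, the density condition \eqref{cond_a}, and the doubling of $(\tilde\cD,|\cdot|,dx)$ all hold with constants depending only on $n,\lambda,\Lambda$. Because $|D^2v|\approx|D^2u(S\,\cdot)|$ with constants depending on $\lambda,\Lambda$, and Lebesgue measure and $L^1$-norms change by the bounded factor $\det S\in[\lambda^{n/2},\Lambda^{n/2}]$, it suffices to prove the estimate for $\Delta$ on this domain, which I will again denote $\cD$.

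Next I would use the Green's function $G=G_\cD$ of the Dirichlet Laplacian: $u(x)=-\int_\cD G(x,y)f(y)\,dy$, so that, up to a bounded pointwise term (trivially of weak type $(1,1)$), $D^2u=Tf$ with $Tf(x)=\mathrm{p.v.}\int_\cD K(x,y)f(y)\,dy$ and $K(x,y)=-D_x^2G(x,y)$. To apply Lemma~\ref{lem01-stein} I must verify two facts: (i) boundedness of $T$ on $L^2(\cD)$, which is the classical a priori estimate $\norm{D^2u}_{L^2(\cD)}\le C(\cD)\norm{f}_{L^2(\cD)}$ for the Dirichlet problem in a smooth (here convex $C^\infty$) domain; and (ii) the Calder\'on--Zygmund kernel bounds $|K(x,y)|\lesssim|x-y|^{-n}$ and $|D_yK(x,y)|\lesssim|x-y|^{-n-1}$ for $x\ne y$ in $\cD$.

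For (ii) I would split $G(x,y)=\Gamma(x-y)-H(x,y)$, where $\Gamma$ is the fundamental solution of $-\Delta$ and $H(x,\cdot)$ is harmonic in $\cD$ with boundary values $\Gamma(x-\cdot)|_{\partial\cD}$; $H$ is harmonic and symmetric in its two arguments. The contribution of $\Gamma$ is the second-order Newtonian kernel $-D_x^2\Gamma(x-y)$, for which the bounds are immediate and the weak-$(1,1)$ property is classical. For the correction $H$: near the flat portion $T(0,\tfrac12)$ one may instead extend $u$ oddly across $\{x^n=0\}$, which reduces that region to the interior case; away from $\partial\cD$ the function $H$ is smooth; and in general the bounds $|D_x^2H|\lesssim|x-y|^{-n}$, $|D_yD_x^2H|\lesssim|x-y|^{-n-1}$ follow from the classical pointwise estimates for the Green's function and its low-order derivatives in a smooth bounded domain, convexity of $\cD$ making these uniform (compare the divergence-form analogue \cite[Lemma~2.2]{DK16}). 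With (i) and (ii) established, Lemma~\ref{lem01-stein} gives $\Abs{\set{x\in\cD:|D^2u(x)|>t}}\le\tfrac Ct\int_\cD|f|$ with $C=C(n,\lambda,\Lambda,\cD)$.

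The step I expect to be the main obstacle is the second kernel bound in (ii) near the \emph{curved} (smooth) part of $\partial\cD$: there one must control three derivatives, $D_yD_x^2G_\cD(x,y)$, of the Green's function uniformly up to the boundary, and—unlike near the flat portion, where the reflected kernel is explicit—no closed formula is available. This is handled by the classical regularity theory for Green's functions of smooth domains (localize, flatten that part of $\partial\cD$ by a $C^\infty$ diffeomorphism, and combine interior and boundary estimates with the known singularity of $\Gamma$); once these pointwise kernel estimates are in hand, the weak-$(1,1)$ conclusion is a black-box application of Lemma~\ref{lem01-stein}, exactly parallel to the divergence-form lemma above.
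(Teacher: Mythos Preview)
Your approach is correct in outline but differs from the paper's. The paper simply refers to \cite[Lemma~2.20]{DK16} together with Lemma~\ref{lem01-stein}; the argument there (and the one used throughout this paper, e.g.\ in the proof of Theorem~\ref{thm-weak11-nd}) verifies the hypothesis \eqref{eq14.58w} of Lemma~\ref{lem01-stein} by \emph{duality} rather than by pointwise kernel bounds. Concretely: given $b$ supported in $\Omega(x_0,r)$ with $\int b=0$, one takes $\mathbf h$ smooth with support in the annulus $\Omega(x_0,2R)\setminus B(x_0,R)$, solves the constant-coefficient adjoint problem $\bar a^{ij}D_{ij}v=\dv^2\mathbf h$ (equivalently $D_{ij}(\bar a^{ij}v)=\dv^2\mathbf h$) with zero data, and uses the identity $\int_\cD \tr(D^2u\,\mathbf h)=\int_\cD vb=\int_\cD (v-\bar v)b$. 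The required decay then comes from interior/boundary regularity of the constant-coefficient solution $v$ on $\Omega(x_0,R)$ (a Lipschitz or H\"older estimate with scaling), followed by summing over dyadic annuli. This avoids any explicit Green's function analysis.

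Your route---reduce to the Laplacian by $y=\bar{\mathbf A}^{-1/2}x$, represent $D^2u$ via the Dirichlet Green's function, and check the standard Calder\'on--Zygmund size and smoothness conditions on $K=-D_x^2G_\cD$---is a legitimate alternative. Two remarks: first, Lemma~\ref{lem01-stein} is stated with the H\"ormander-type hypothesis \eqref{eq14.58w}, not with kernel bounds, so you should insert the one-line argument that $|D_yK(x,y)|\lesssim|x-y|^{-n-1}$ implies \eqref{eq14.58w} via the mean-zero cancellation of $b$. Second, your ``main obstacle'' is a real one: you need the third-order bound $|D_yD_x^2G_\cD(x,y)|\lesssim|x-y|^{-n-1}$ uniformly up to the curved part of $\partial\cD$, and uniformly over the compact family of transformed domains $\bar{\mathbf A}^{-1/2}\cD$. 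This is indeed classical for $C^\infty$ domains, but it is exactly the kind of technical overhead the duality argument sidesteps---there one only needs an $L^\infty$ (or $C^\alpha$) estimate for a single solution of a homogeneous constant-coefficient equation away from the data, which is immediate. The trade-off is that your approach is self-contained and yields explicit kernel information, while the paper's is shorter and meshes with the variable-coefficient arguments elsewhere in the paper.
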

\begin{proof}
See proof of  \cite[Lemma 2.20]{DK16} and Lemma~\ref{lem01-stein}.
\end{proof}

\begin{lemma}			\label{lem-weak11-adj}
Let $\bar{\mathbf{A}}=(\bar a^{ij})$ be a constant symmetric matrix satisfying \eqref{ellipticity-nd}.
For $\mathbf g \in L^2(\cD)$ let $u \in L^{2}(\cD)$ be a unique adjoint solution to
\[
\sum_{i,j=1}^n D_{ij} (\bar a^{ij}  u)=\dv^2 \mathbf g\;\mbox{ in }\; \cD;\quad
u=\frac{\mathbf g\nu\cdot\nu}{\bar{\mathbf A}\nu\cdot\nu} \;\mbox{ on } \; \partial \cD.
\]
Then for any $t>0$, we have
\[
\Abs{\set{x \in \cD : \abs{u(x)} > t}}  \le \frac{C}{t} \int_{\cD}\, \abs{g},
\]
where $C=C(n, \lambda, \Lambda, \cD)$.
\end{lemma}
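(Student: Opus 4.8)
The plan is to deduce the lemma from the abstract Calderón--Zygmund criterion of Lemma~\ref{lem01-stein}, exactly as in the proofs of Lemmas~\ref{lem02-weak11} and \ref{lem-weak11-nd}: once we know that the solution operator $T\colon \mathbf g\mapsto u$ is bounded on $L^2(\cD)$ and that, for $\mathbf g$ supported in a ball $B$, $u$ is represented on $\cD\setminus 2B$ by a kernel obeying Hörmander's cancellation condition, the weak type-$(1,1)$ bound follows by the standard route (Calderón--Zygmund decomposition of $\abs{\mathbf g}$ at height $\sim t$, the good part absorbed by the $L^2$ bound and Chebyshev's inequality, the bad part handled atom by atom via the Hörmander estimate together with $\sum_Q \abs{2Q}\lesssim t^{-1}\norm{\mathbf g}_{L^1}$). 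Since $\mathbf g$ is matrix-valued, the decomposition is performed on $\abs{\mathbf g}$ and the atoms are matrices with entrywise mean zero.

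First I would establish the $L^2$ bound (which also yields existence and uniqueness of $u$) by duality against the Dirichlet problem. As $\bar{\mathbf A}$ is constant, $\cL=\bar a^{ij}D_{ij}$ is formally self-adjoint, and for every $h\in L^2(\cD)$ there is a unique $v_h\in W^{2,2}(\cD)\cap W^{1,2}_0(\cD)$ solving $\cL v_h=h$ in $\cD$, $v_h=0$ on $\partial\cD$, with $\norm{v_h}_{W^{2,2}(\cD)}\le C\norm{h}_{L^2(\cD)}$ by the Calderón--Zygmund estimates on the smooth domain $\cD$. Since $f=0$ and the boundary term in \eqref{eq13.53pde} vanishes here ($\psi\equiv 0$), testing the defining identity with $v=v_h$ gives $\int_\cD u\,h\,dx=\int_\cD\tr(\mathbf g\,D^2 v_h)\,dx$ for all such $h$; the right-hand side is at most $\norm{\mathbf g}_{L^2(\cD)}\norm{D^2 v_h}_{L^2(\cD)}\le C\norm{\mathbf g}_{L^2(\cD)}\norm{h}_{L^2(\cD)}$, which via Riesz representation simultaneously produces the adjoint solution and yields $\norm{u}_{L^2(\cD)}\le C\norm{\mathbf g}_{L^2(\cD)}$, while uniqueness is immediate from $\set{\cL v:v\in W^{2,2}(\cD)\cap W^{1,2}_0(\cD)}=L^2(\cD)$.

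Next, letting $G(x,y)$ be the (symmetric) Green function of $\cL$ in $\cD$ and writing $v_h(\cdot)=\int_\cD G(\cdot,y)h(y)\,dy$, an interchange of integrals — legitimate once the arguments are separated, since then $\abs{D^2 G}$ is bounded — converts the identity above into the representation $u(x)=\int_\cD D_{y_k y_l}G(x,y)\,g^{kl}(y)\,dy$ for a.e.\ $x$ with $\dist(x,\supp\mathbf g)>0$; thus $T$ acts by the kernel $K(x,y)=D^2_y G(x,y)$. On the smooth \emph{convex} domain $\cD$ one has the classical pointwise bounds $\abs{D^2_y G(x,y)}\le C\abs{x-y}^{-n}$ and $\abs{D^3_y G(x,y)}\le C\abs{x-y}^{-n-1}$ for $x\ne y$ in $\overline{\cD}$; convexity guarantees that the segment joining any two points of $\overline{\cD}$ stays inside $\overline{\cD}$, so the mean value theorem yields
\[
\int_{\set{x\in\cD\,:\,\abs{x-y_0}>2\abs{y-y_0}}}\abs{K(x,y)-K(x,y_0)}\,dx\le C,\qquad y,y_0\in\overline{\cD}.
\]
Feeding the $L^2$ bound and this Hörmander estimate into Lemma~\ref{lem01-stein} — choosing for each Calderón--Zygmund cube $Q$ a base point $y_Q\in\overline{\cD}$ within $C\ell(Q)$ of $Q$ and using the entrywise mean-zero property of the atom $\mathbf b_Q$ to replace $K(x,y)$ by $K(x,y)-K(x,y_Q)$ — completes the argument.

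I expect the main technical obstacle to be the behavior of $G$ up to $\partial\cD$: one must be certain that the pointwise estimates on $D^2_y G$ and $D^3_y G$ and the interchange step in the representation formula remain valid up to the boundary, which is precisely where the smoothness of $\cD$ (and, for the mean value argument, its convexity) is used. If one preferred to avoid pointwise third-derivative bounds on $G$, the cancellation estimate could instead be extracted from boundary Calderón--Zygmund estimates for $G(x,\cdot)$, or one could dualize entirely and run the argument through the $L^\infty\to\mathrm{BMO}$ bound for $D^2 v_h$.
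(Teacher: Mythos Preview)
Your proposal is correct. Both you and the paper reduce the lemma to Lemma~\ref{lem01-stein}; the paper delegates the verification of its cancellation hypothesis to \cite[Lemma~2.23]{DK16}, whose argument (in line with the proofs of Theorems~\ref{thm-weak11-d} and \ref{thm-weak11-nd} in the present paper) proceeds by \emph{duality}: for mean-zero data $\mathbf b$ supported in $\Omega(x_0,r)$ one pairs $u$ against $h$ supported in a far annulus, solves the direct problem $\bar a^{ij}D_{ij}v=h$, $v=0$ on $\partial\cD$, rewrites $\int uh=\int\tr(\mathbf b\,D^2v)=\int\tr\bigl(\mathbf b\,(D^2v-\overline{D^2v})\bigr)$, and invokes interior/boundary oscillation estimates for $D^2v$ (the constant-coefficient analogue of \eqref{eq08.33st}). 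Your route via the explicit kernel $K(x,y)=D^2_yG(x,y)$ and the H\"ormander condition is the classical singular-integral alternative and is entirely equivalent here. The duality argument has the practical advantage of sidestepping the pointwise third-derivative bound on $G$ up to $\partial\cD$---the point you correctly flag as delicate---since the needed regularity of $D^2v$ follows directly from standard constant-coefficient boundary estimates; conversely, your approach makes the Calder\'on--Zygmund kernel structure fully explicit.
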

\begin{proof}
See proof of  \cite[Lemma 2.23]{DK16} and Lemma~\ref{lem01-stein}.
\end{proof}

We finish this subsection by a Lipschitz estimate for the following equation, which will be used in the proof of Theorem \ref{thm-main-adj}:
\begin{equation}
                            \label{eq1.59}
D_{ij}(\bar{a}^{ij} u)=\dv^2 \bar{\mathbf g}\;\text{ in }\; B^+(0,2),\quad
u=\frac{\bar{\mathbf g}\nu\cdot\nu}{\bar{\mathbf A}\nu\cdot\nu}\;\text{ on }\;T(0,2),
\end{equation}
where $\bar{\mathbf A}=(\bar{a}^{ij})$ and $\bar{\mathbf g}=(\bar{g}^{ij})$ are constant symmetric matrices.
\begin{lemma}
                                        \label{lem-bdry-Lip}
Let us denote $ B^+_r:=B^+(0,r)$.
Suppose that $u\in L^2(B^+_2)$ satisfies \eqref{eq1.59}.
Then for any $p\in (0,1)$ and $c\in \bR$, we have
\begin{equation}
                                    \label{eq2.02}
\norm{Du}_{L^\infty(B_{1}^+) }\le C \norm{u-c}_{L^p(B_2^+)},
\end{equation}
where $C=C(n, \lambda, \Lambda, p)$.
\end{lemma}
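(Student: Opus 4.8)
The plan is to reduce \eqref{eq1.59} to a homogeneous Dirichlet problem on the half-ball, prove \eqref{eq2.02} first for one distinguished value of $c$ by an odd reflection, and then treat general $c$ by an elementary argument. First I would simplify \eqref{eq1.59}: since $\bar{\mathbf A}$ and $\bar{\mathbf g}$ are constant, $\dv^2\bar{\mathbf g}=0$ and $D_{ij}(\bar a^{ij}u)=\bar a^{ij}D_{ij}u$; and on $T(0,2)$ the exterior normal of $B^+_2$ is $\nu=-e_n$, so $\bar{\mathbf g}\nu\cdot\nu/(\bar{\mathbf A}\nu\cdot\nu)=\bar g^{nn}/\bar a^{nn}=:c_0$, a constant. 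Hence \eqref{eq1.59} says exactly that $u$ is a weak adjoint solution of $\bar a^{ij}D_{ij}u=0$ in $B^+_2$ with $u=c_0$ on $T(0,2)$; testing the defining identity against functions supported away from the spherical part of $\partial B^+_2$ and invoking interior hypoellipticity of the constant-coefficient operator together with boundary regularity at $T(0,2)$, the function $w:=u-c_0$ satisfies $\bar a^{ij}D_{ij}w=0$ in $B^+_2$, $w\in C^1(B^+_2\cup T(0,2))$, and $w=0$ on $T(0,2)$.

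Next I would prove \eqref{eq2.02} in the case $c=c_0$. I pick a fixed invertible linear map $P$ whose last row is a positive multiple of $e_n\tran$ (so $P$ preserves $\{x^n>0\}$ and $\{x^n=0\}$) and which is chosen, by a completion of the square, so that $P\tran\bar{\mathbf A}P=\operatorname{diag}(\bar{\mathbf A}',\tilde a^{nn})$ carries no $D_{in}$-coupling with $i<n$; here $P$, $P^{-1}$, and the ellipticity constants of $P\tran\bar{\mathbf A}P$ are all controlled by $n,\lambda,\Lambda$. Then $\tilde w:=w\circ P$ solves $\tilde a^{nn}D_{nn}\tilde w+\sum_{i,j<n}\tilde a^{ij}D_{ij}\tilde w=0$ and still vanishes on $\{y^n=0\}$, so its odd reflection $\hat w(y',y^n):=-\tilde w(y',-y^n)$ for $y^n<0$ is $C^1$ across $\{y^n=0\}$ and — the operator having no normal–tangential coupling — solves the same constant-coefficient equation across it, hence is smooth by hypoellipticity. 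Applying the interior bound for $D\hat w$ in $L^\infty$ of a smaller ball by the $L^1$ norm of $\hat w$ on a larger one, upgrading the right side to the $L^p$ norm ($0<p<1$) by the standard absorption iteration, and transforming back (using $\int_{\{y^n<0\}}|\hat w|^p=\int_{\{y^n>0\}}|\hat w|^p$ and that the transformed copies of $B^+_1$ and $B^+_2$ are fixed dilates of one another, compactly nested after reflection), I obtain
\[
\norm{Du}_{L^\infty(B^+_1)}=\norm{Dw}_{L^\infty(B^+_1)}\le C\,\norm{u-c_0}_{L^p(B^+_2)},\qquad C=C(n,\lambda,\Lambda,p),
\]
and, by the same argument at inner radius $\tfrac32$, also $\norm{Dw}_{L^\infty(B^+_{3/2})}\le C\norm{u-c_0}_{L^p(B^+_2)}$.

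To pass to an arbitrary $c\in\bR$ it suffices to show $N:=\norm{u-c_0}_{L^p(B^+_2)}\le C\norm{u-c}_{L^p(B^+_2)}$. If $|c-c_0|^p\abs{B^+_2}\le\tfrac12N^p$, then applying subadditivity of $\norm{\cdot}_{L^p}^p$ ($0<p\le1$) to $u-c_0=(u-c)+(c-c_0)$ gives $\norm{u-c}_{L^p(B^+_2)}^p\ge\tfrac12N^p$. Otherwise $|c-c_0|\gtrsim N$; since $w=u-c_0$ vanishes on $T(0,2)$ with $\norm{Dw}_{L^\infty(B^+_{3/2})}\le CN$, we have $|u-c_0|\le CN\,x^n$ on $B^+_{3/2}$, so $|u-c|\ge\tfrac12|c-c_0|$ on the strip $\set{x\in B^+_{3/2}:x^n<\mu}$ with $\mu:=\min(\tfrac12,|c-c_0|/(2CN))$, whose measure is $\gtrsim\mu\gtrsim\min(1,|c-c_0|/N)$; this yields $\norm{u-c}_{L^p(B^+_2)}^p\gtrsim|c-c_0|^p\mu\gtrsim N^p$. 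In all cases $N\lesssim\norm{u-c}_{L^p(B^+_2)}$, which together with the $c=c_0$ estimate proves \eqref{eq2.02}.

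The hard part is the reflection step: arranging a change of variables that kills the $D_{in}$ ($i<n$) terms while fixing $\{x^n>0\}$, so that the odd extension of $\tilde w$ is a genuine solution across $T$, and keeping track that every constant and radius produced along the way depends only on $n,\lambda,\Lambda$ (and $p$); the rest is routine. An alternative that avoids the reflection is to quote directly the interior-and-flat-boundary Schauder (or $C^{1,1}$) estimate for the homogeneous constant-coefficient Dirichlet problem solved by $w$ on $B^+_2$ with data $0$ on $T(0,2)$, after which only the $L^1\to L^p$ improvement and the case analysis above remain.
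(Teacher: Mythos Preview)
Your argument is correct, but it takes a more circuitous route than the paper. The simplification you miss is that for \emph{every} $c\in\bR$ the function $u-c$ is still constant on $T(0,2)$ and still solves $\bar a^{ij}D_{ij}(u-c)=0$; the paper therefore sets $c=0$ at the outset and never needs your final case analysis comparing $\norm{u-c_0}_{L^p}$ with $\norm{u-c}_{L^p}$. After that reduction, the paper diagonalizes to $\bar{\mathbf A}=\mathbf I$ by an arbitrary linear change plus covering (no need to preserve the half-space) and, instead of reflecting, differentiates: for $k<n$ the tangential derivative $D_k u$ vanishes on $T$ and is harmonic, while from the equation $D_{nn}u=-\sum_{k<n}D_{kk}u=0$ on $T$, so $D_n u$ satisfies a zero Neumann condition; classical half-ball estimates then give $\norm{Du}_{L^\infty}+\norm{D^2u}_{L^\infty}\lesssim\norm{Du}_{L^2}$, and an interpolation $\norm{Du}_{L^\infty}\le\varepsilon\norm{D^2u}_{L^\infty}+C(\varepsilon)\norm{u}_{L^p}$ together with the usual iteration finishes. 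Your odd-reflection route genuinely requires $w=u-c_0$ to vanish on $T$, which is why you are forced into the case split afterwards; the paper's derivative-based argument only needs $u-c$ to be \emph{constant} on $T$, so it handles all $c$ simultaneously. Both approaches are standard, and your ``alternative'' at the end (quote boundary Schauder for the homogeneous Dirichlet problem) is essentially what the paper does. One small slip: under $\tilde w=w\circ P$ the transformed coefficient matrix is $P^{-1}\bar{\mathbf A}(P^{-1})\tran$, not $P\tran\bar{\mathbf A}P$; your block-diagonalization still goes through with this correction.
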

\begin{proof}
First we notice that $u$ is smooth in $B^+(0,2) \cup T(0,2)$ and satisfies
\[
\bar a_{ij}D_{ij} u=0\;\text{ in }\; B^+(0,2),\quad  u=\text{constant}\;\text{ on }\;T(0,2).
\]
Obviously, $u-c$ enjoys the same properties for any $c\in \bR$.
Thus, without loss of generality, we may assume that $c=0$.
By a linear transformation and a covering argument, we may further assume that $\bar a_{ij}=\delta_{ij}$, i.e. $\bar{\mathbf A}=\mathbf{I}$.
The problem is thus reduced to
\begin{equation}
                            \label{eq1.59b}
\Delta u=0\;\text{ in }\; B^+(0,2),\quad u=\text{constant}\;\text{ on }\;T(0,2).
\end{equation}
By differentiating \eqref{eq1.59b} in the tangential direction $x_k$ for $k=1,2, \ldots, n-1$, we see that $v_k=D_{k}u$ satisfies
\begin{equation*}
\Delta v_k=0\;\text{ in }\; B^+(0,2),\quad v_k=0\;\text{ on }\;T(0,2).
\end{equation*}
By classical estimates for harmonic functions, we thus have
\begin{equation}
                                    \label{eq2.09}
\norm{D_{k} u}_{L^\infty(B_{1}^+)}+\norm{DD_{k} u}_{L^\infty(B_{1}^+)} \le C \norm{D_{k} u}_{L^2(B_{3/2}^+)},\quad k=1,2, \ldots, n-1.
\end{equation}
Next, from the equation, we find that $D_{nn}u= -\sum_{k=1}^{n-1} D_{kk} u=0$ on $T(0,2)$.
Therefore, the normal derivative $v_n=D_n u$ satisfies
\begin{equation*}
\Delta v_n=0\;\text{ in }\; B^+(0,2),\quad  D_n v_n=0\;\text{ on }\; T(0,2).
\end{equation*}
Again, by classical estimates for harmonic functions, we have
\begin{equation}
                                    \label{eq2.09b}
\norm{D_{n}u}_{L^\infty(B_{1}^+)}+\norm{DD_{n}u}_{L^\infty(B_{1}^+)} \le C \norm{D_{n}u}_{L^2(B_{3/2}^+)}.
\end{equation}
Combining \eqref{eq2.09} and \eqref{eq2.09b} yields
\begin{equation}
                                    \label{eq2.13}
\norm{Du}_{L^\infty(B_{1}^+)}+ \norm{D^2 u}_{L^\infty(B_{1}^+)} \le C\norm{Du}_{L^2(B_{3/2}^+)}.
\end{equation}
Now, \eqref{eq2.02} follows from \eqref{eq2.13}, the interpolation inequality (proved by using the standard mollification technique):
\[
\norm{Du}_{L^2(B_{3/2}^+)}
\le C\norm{Du}_{L^\infty(B_{3/2}^+)}
\le \varepsilon \norm{D^2 u}_{L^\infty(B_{3/2}^+)}
+C(\varepsilon,n,p)\, \norm{u}_{L^p(B_{3/2}^+)},
\]
and a standard iteration argument.
\end{proof}

\subsection{Proof of Theorem~\ref{thm-main-d}}
First, we develop an interior $C^1$ estimate.
\begin{proposition}				\label{prop2.5p}
For any $\Omega' \subset\subset \Omega$, we have $u \in C^1(\overline \Omega{}')$.
\end{proposition}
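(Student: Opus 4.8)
The plan is to establish the interior $C^1$ estimate by a standard perturbation/freezing argument adapted to the Dini mean oscillation setting, following the scheme of \cite{DK16}. First I would fix a ball $B(x_0, 2R) \subset\subset \Omega$ and, for each radius $0 < r \le R$, compare $u$ with the solution $w$ of the constant-coefficient equation $\dv(\bar{\mathbf{A}}_{B(x_0,r)} Dw) = 0$ in $B(x_0, r)$ with $w = u$ on $\partial B(x_0, r)$, where $\bar{\mathbf{A}}_{B(x_0,r)}$ denotes the average of $\mathbf{A}$ over the ball. The difference $v = u - w$ satisfies a divergence-form equation whose right-hand side collects the error terms $\dv\big((\mathbf{A} - \bar{\mathbf{A}}) Du\big)$, $\dv(\vec b u)$, $\dv(\vec g - \bar{\vec g})$, $\vec c \cdot Du$, $du$, and $f$; one estimates $\fint_{B(x_0,r)} |Dv|$ by the weak type-$(1,1)$ bound of Lemma~\ref{lem02-weak11} (in its interior ball version, which is the interior estimate from \cite{DK16}), producing a bound in terms of $\omega_{\mathbf{A}}(r)$, $\omega_{\vec g}(r)$, $\varrho(r)$ for the lower-order contributions (using Lemma~\ref{lem00} to see that products like $\vec b u$ inherit Dini mean oscillation once $u$ is known to be continuous), and the $L^q$ norms of $\vec c, d, f$ on the ball, which decay in $r$ by H\"older's inequality since $q > n$.

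Next I would set up the mean-oscillation decay. Define $\phi(x_0, r) := \inf_{\vec q \in \bR^n} \fint_{B(x_0,r)} |Du - \vec q|$, the Campanato-type quantity measuring oscillation of $Du$. For the comparison function $w$, classical $C^{1,\alpha}$ (indeed $C^\infty$) estimates for constant-coefficient equations give $\phi_w(x_0, \kappa r) \lesssim \kappa \,\phi_w(x_0, r)$ for any dilation factor $\kappa \in (0, 1/2]$. Combining this with the perturbation estimate for $v = u - w$ yields an inequality of the form
\[
\phi(x_0, \kappa r) \le C_0 \kappa\, \phi(x_0, r) + C\big(\omega_{\mathbf{A}}(r) + \omega_{\vec g}(r) + \tilde\omega(r)\big)\Big(\fint_{B(x_0,2r)} |Du| + \text{norms of } u, f, \ldots\Big),
\]
where $\tilde\omega$ bundles the lower-order Dini-type moduli and the $L^q$-decay of $\vec c,d,f$. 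Choosing $\kappa$ small so that $C_0\kappa \le \tfrac12$ and iterating (a routine summation lemma, using that $\omega_{\mathbf{A}}$, $\omega_{\vec g}$, etc., are Dini functions so that $\sum_k \omega(\kappa^k r) < \infty$), one concludes that $\phi(x_0, r) \to 0$ as $r \to 0$ with a rate controlled by $\int_0^r \omega(s)/s\, ds$ and similar Dini integrals. This gives that $Du$ has a Lebesgue point at every $x_0 \in \Omega'$, that the representative is continuous, and furnishes an explicit modulus of continuity — which is what Section~\ref{sec2} promises.

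One technical point to handle carefully at the outset: the argument needs $u$ to already be continuous (and locally bounded) on $\Omega'$ so that products such as $\vec b u$ and $du$ can be controlled and shown to be of Dini mean oscillation via Lemma~\ref{lem00}. This is supplied by the De Giorgi--Nash--Moser theory: since $u \in W^{1,2}_0(\Omega)$ solves a divergence-form equation with bounded principal part and $L^q$ ($q>n$) lower-order terms and right-hand side $\dv\vec g + f$ with $\vec g \in L^\infty_{loc}$ (Dini mean oscillation implies local boundedness on subdomains, or at worst $\vec g \in L^p$ for all $p$), $u$ is locally H\"older continuous, hence locally bounded, on $\Omega$. So I would open the proof by invoking this to fix $M := \norm{u}_{L^\infty(\Omega'')}$ on a slightly larger $\Omega'' \supset\supset \Omega'$.

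The main obstacle I anticipate is bookkeeping the lower-order terms so that their contributions genuinely decay as $r \to 0$ and feed correctly into the iteration: the terms $\vec b u$ and $\dv(\vec b u)$ require $u$'s continuity and Lemma~\ref{lem00}, while $\vec c \cdot Du + du + f \in L^q$ with $q>n$ contributes $\fint_{B(x_0,r)}|\cdot| \lesssim r^{1 - n/q}\norm{\cdot}_{L^q}$, a genuine power decay — but one must be sure these $L^q$ terms, after being absorbed into the weak-$(1,1)$ comparison, produce a summable series and do not reintroduce an uncontrolled $\fint |Du|$ on the right that fails to close the iteration. Beyond this, the proof is a direct transcription of the interior argument of \cite{DK16}, with the extra lower-order and inhomogeneous terms treated as perturbations of lower order.
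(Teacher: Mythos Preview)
Your approach is correct in outline but takes a genuinely different route from the paper. The paper does \emph{not} rerun the Campanato iteration with all lower-order terms present; instead it reduces to the already-proved case \cite[Theorem~1.5]{DK16} by a preprocessing step. Concretely, it first bootstraps $u\in W^{1,p}(\Omega)$ for every $p<\infty$ using global $W^{1,p}$ theory for $\dv(\mathbf A\nabla u)=\dv\vec g'$, then introduces the Newtonian potential $v$ of $(f-c^iD_iu-du)\chi_\Omega$, so that the non-divergence lower-order terms become $\Delta v=\dv(\nabla v)$ with $\nabla v\in C^{0,\delta}$. Setting $\vec g':=\vec g-u\vec b+\nabla v$, Lemma~\ref{lem00} shows $\vec g'$ is of Dini mean oscillation, and the equation reads $\dv(\mathbf A\nabla u)=\dv\vec g'$ with no lower-order terms. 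One then simply quotes \cite[Theorem~1.5]{DK16}. This is shorter and more modular: the Campanato machinery is invoked once (inside the cited theorem), and all the lower-order complexity is absorbed into a single modified datum $\vec g'$.

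Your direct iteration would also work, but be careful about two points. First, the weak type-$(1,1)$ estimate of Lemma~\ref{lem02-weak11} only controls $Dv$ in $L^{1,\infty}$, not $L^1$; to make the comparison close you must define $\phi(x_0,r)$ with an $L^p$ average for some $p\in(0,1)$, as the paper does in \eqref{eq12.14fo}, not the $L^1$ average you wrote. Second, the scalar terms $\vec c\cdot Du+du+f$ are not in divergence form, so Lemma~\ref{lem02-weak11} does not apply to them; you would need to treat them separately via an energy or $L^p$ estimate for $\dv(\bar{\mathbf A}\nabla w)=h$ that yields a factor $r^{1-n/q}$, and for $\vec c\cdot Du$ this requires either a priori control of $\norm{Du}_{L^\infty}$ (supplied by approximation plus a closing argument like Lemma~\ref{lem-03}) or a preliminary $W^{1,p}$ bootstrap as the paper does. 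None of this is fatal, but it explains why the paper's Newtonian-potential reduction is the cleaner path.
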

\begin{proof}
Since $a^{ij}$ are uniformly continuous over $\overline\Omega$ and $\partial\Omega$ is $C^1$, by moving the lower-order terms to the right-hand side of the equation, we can show that $u \in W^{1, p}(\Omega)$, for any $1<p<\infty$.
Indeed, we can rewrite the equation as
\[
D_i(a^{ij} D_j u)= \dv (\vec g- u \vec b+\nabla v),\ \text{in}\ \Omega,\quad u=0,\ \text{on}\ \partial\Omega,
\]
where $v$ is the Newtonian potential of $(f- c^i D_i u - du)\chi_{\Omega}$.
By the Calder\'on-Zygmund theory (see e.g. \cite[Theorem~9.9]{GT}) and H\"older's inequality, we have
\[
\norm{D^2 v}_{L^{s}(\bR^n)} \le C \left( \norm{f}_{L^q(\Omega)} + \norm{c}_{L^q(\Omega)} \norm{\nabla u}_{L^2(\Omega)}+ \norm{d}_{L^q(\Omega)}\norm{u}_{L^2(\Omega)} \right),
\]
where $\frac{1}{s}=\frac{1}{q}+\frac{1}{2}$.
Let us set
\[
\vec g':=\vec g - u \vec b+ \nabla v
\]
and note that $\vec g' \in L^{r}(\Omega)$, where $\frac{1}{r}=\frac{1}{q}+\frac{1}{2}-\frac{1}{n}$, and
\[
\norm{\vec g'}_{L^{r}(\Omega)} \le C \left( \norm{\vec g}_{L^\infty(\Omega)} + \norm{f}_{L^q(\Omega)} \right) + C \left( \norm{\vec b}_{L^\infty(\Omega)} + \norm{c}_{L^q(\Omega)}+ \norm{d}_{L^q(\Omega)}\right) \norm{u}_{W^{1,2}(\Omega)}.
\]
On the other hand, by the energy estimates (see e.g. \cite[\S 3.4]{LU68}), we get
\[
\norm{u}_{W^{1,2}(\Omega)} \le C \norm{f}_{L^q(\Omega)}+ C\norm{\vec g}_{L^\infty(\Omega)}+ C \norm{u}_{L^1(\Omega)}.
\]
Then we apply the global $W^{1,p}(\Omega)$ theory (see, e.g. \cite[Theorem 1]{AuscherQafsaoui02})
to get
\[
\norm{u}_{W^{1,r}(\Omega)} \le C \norm{f}_{L^q(\Omega)}+ C\norm{\vec g}_{L^\infty(\Omega)} + C \norm{u}_{L^1(\Omega)},
\]
where $C$ is a constant depending only on $n$, $\lambda$, $\Lambda$, $q$, $\Omega$, $\partial \Omega$, $\omega_{\mathbf{A}}$, $\norm{\vec c}_{L^q(\Omega)}$, $\norm{d}_{L^q(\Omega)}$, and $\norm{\vec b}_{L^\infty(\Omega)}$.
Then, by standard bootstrapping argument, we have $u \in W^{1, p}(\Omega)$, for any $1<p<\infty$, and
\[
\norm{u}_{W^{1,p}(\Omega)} \le C \norm{f}_{L^q(\Omega)}+ C\norm{\vec g}_{L^\infty(\Omega)} + C \norm{u}_{L^1(\Omega)}
\]
as claimed with $C$ depending additionally on $p$.

By Morrey's inequality, we have $u \in C^{0,\mu}(\Omega)$ for any $0<\mu<1$  and
\[
\norm{u}_{C^{0,\mu}(\Omega)} \le C \norm{f}_{L^q(\Omega)}+ C\norm{\vec g}_{L^\infty(\Omega)} + C \norm{u}_{L^1(\Omega)}.
\]
Also, note that $f - c^i D_iu -du \in L^{p}(\Omega)$ for $n<p <q$ and
\[
\norm{f-c^i D_iu -du}_{L^{p}(\Omega)} \le C \left( \norm{f}_{L^q(\Omega)}+ \norm{\vec g}_{L^\infty(\Omega)} +  \norm{u}_{L^1(\Omega)} \right) \left( 1+ \norm{\vec c}_{L^{q}(\Omega)}+\norm{d}_{L^{q}(\Omega)} \right).
\]
Recall that $v$ is the Newtonian potential of $f- c^i D_i u - du$.
By \cite[Theorem~9.9]{GT} and Morrey's inequality, we find $\nabla v \in C^{0,\delta}(\bR^n)$ with $\delta=1-\frac{n}{p}$ and
\[
[\nabla v]_{C^{0,\delta}(\bR^n)}  \le C \left( \norm{f}_{L^q(\Omega)}+ \norm{\vec g}_{L^\infty(\Omega)}+ \norm{u}_{L^1(\Omega)} \right) \left( 1+ \norm{\vec c}_{L^{q}(\Omega)}+\norm{d}_{L^{q}(\Omega)} \right).
\]
Therefore, by Lemma~\ref{lem00}, we see that $\vec g'$ is of Dini mean oscillation in $\Omega$.

In summary, we see that $u$ is a weak solution of
\begin{equation*}
\dv(\mathbf{A}\nabla u)= \dv  \vec g' \;\text{ in }\;\Omega,
\end{equation*}
where $\vec g'$ is of Dini mean oscillation and $\omega_{\vec g'}$ is completely determined by the given data (namely $n$, $\lambda$, $\Lambda$, $\Omega$, $\omega_{\mathbf{A}}$, $p$, $q$, $\norm{f}_{L^q(\Omega)}$, $\norm{\vec c}_{L^q(\Omega)}$, $\norm{d}_{L^q(\Omega)}$, $\omega_b$, $\norm{\vec b}_{L^\infty(\Omega)}$, $\omega_{\vec g}$,  and $\norm{\vec g}_{L^\infty(\Omega)}$) and $\norm{u}_{L^1(\Omega)}$.
By \cite[Theorem~1.5]{DK16}, we thus find that $u \in C^1(\overline \Omega{}')$ and $\norm{u}_{C^1(\overline \Omega{}')}$ is bounded by a constant $C$ depending only on the above mentioned given data, $\norm{u}_{L^1(\Omega)}$, and $\Omega'$.
\end{proof}

Next, we turn to $C^1$ estimate near the boundary.
We shall write $B^+_r=B^+(0,r)$.
Let $\vec g'$ and $\vec g''$ be as given in the proof of Proposition~\ref{prop2.5p}.
Under a volume preserving mapping of flattening boundary
\[
y=\vec \Phi(x)=(\Phi^1(x),\ldots, \Phi^n(x)),	\quad (\det D\vec \Phi =1)
\]
let $\tilde u(y)=u(x)$,
which satisfies
\[
D_i(\tilde a^{ij} D_j \tilde u)= \dv \tilde{\vec g}'
\]
and
\begin{gather*}
\tilde a^{ij}(y)= D_l\Phi^i D_k\Phi^j a^{kl}(x), 
\quad \tilde {\vec g}'(y)= D \vec \Phi\tran \vec g'(x).
\end{gather*}
By Lemma~\ref{lem00}, we see that the coefficients $\tilde a^{ij}$ and the data $\tilde {\vec g}'$ are still of Dini mean oscillation.
Therefore, we are reduced to prove the following.
\begin{proposition}					\label{prop01}
If $u \in W^{1,2}(B^+_4)$ is a weak solution of
\[
D_i(a^{ij} D_j u)= \dv \vec g\;\mbox{ in }\;B^+_4
\]
satisfying $u=0$ on $T(0,4)$, then $u \in C^1(\overline B{}^+_1)$.
\end{proposition}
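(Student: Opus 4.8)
The plan is to derive a modulus-of-continuity estimate for $Du$ up to $T(0,1)$ by a Campanato-type iteration on the auxiliary domains $\cD(\bar x,r)$, using Lemma~\ref{lem02-weak11} to handle the frozen-coefficient error in weak-$L^1$, and then to combine this with the interior theory. Since $a^{ij}$ is of Dini mean oscillation, it is uniformly continuous on $\overline{B^+_4}$, so the local and boundary Calder\'on--Zygmund estimates (the latter available because $T(0,4)$ is flat) give $u\in W^{1,p}(B^+_3)$ for every $p<\infty$; applying \cite[Theorem~1.5]{DK16} on balls $B\subset\subset B^+_4$ already yields $u\in C^1$ in $B^+_4$ away from $T$, so only the behaviour of $Du$ near $T(0,1)$ is in question. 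Mollifying $\mathbf A$ and $\vec g$ (which enlarges their Dini moduli by at most a fixed factor and barely changes the constants in \eqref{ellipticity-d}--\eqref{bdd-d}), we may in addition assume $u\in C^1(\overline{B^+_3})$ and set $M:=\norm{Du}_{L^\infty(B^+_3)}<\infty$; the estimate we prove for the modulus of continuity of $Du$ on $\overline{B^+_1}$ will depend only on $n,\lambda,\Lambda,\omega_{\mathbf A},\omega_{\vec g},\norm{\vec g}_{L^\infty(B^+_3)}$ and $M$, with $M$ eliminated at the last step.

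Fix $\bar x\in T(0,1)$ and $0<r\le r_0$, let $\bar{\mathbf A}:=\fint_{\cD(\bar x,r)}\mathbf A$, $\bar{\vec g}:=\fint_{\cD(\bar x,r)}\vec g$, and split $u=v+w$ on $\cD(\bar x,r)$, where $w\in W^{1,2}_0(\cD(\bar x,r))$ solves
\[
D_i(\bar a^{ij}D_jw)=\dv\vec F,\qquad F^i:=(\bar a^{ij}-a^{ij})D_ju+(g^i-\bar g^i),
\]
so that $v=u-w$ satisfies $D_i(\bar a^{ij}D_jv)=0$ in $\cD(\bar x,r)$ and, since $w=0$ on $\partial\cD(\bar x,r)$ and $u=0$ on $T(\bar x,r)\subset T(0,4)$, also $v=0$ on $T(\bar x,r)$. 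For $v$: a linear change of variables preserving $\set{x^n=0}$ normalizes the symmetric part of the constant leading matrix to the identity, and odd reflection across $\set{x^n=0}$ turns $v$ into a harmonic function on a full ball, so classical derivative estimates together with the reverse H\"older inequality for such solutions give, for $0<\kappa\le\tfrac14$ and a fixed $q_0\in(0,1)$,
\[
\osc_{\cD(\bar x,\kappa r)}Dv\ \le\ C\kappa\,\Bigl[\,\phi(\bar x,r)+\fint_{\cD(\bar x,r)}\abs{\vec F}\,\Bigr],\qquad \phi(\bar x,r):=\inf_{\vec q\in\bR^n}\Bigl(\fint_{\cD(\bar x,r)}\abs{Du-\vec q}^{q_0}\Bigr)^{1/q_0},
\]
the $\vec F$-term entering when one passes from $Du$ to $Dv=Du-Dw$ and uses $B^+(\bar x,r/2)\subset\cD(\bar x,r)$. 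For $w$: rescaling Lemma~\ref{lem02-weak11} to $\cD(\bar x,r)$ gives $\abs{\set{x\in\cD(\bar x,r):\abs{Dw}>t}}\le Ct^{-1}\!\int_{\cD(\bar x,r)}\abs{\vec F}$, and the standard truncation estimate converts this into
\[
\Bigl(\fint_{\cD(\bar x,\kappa r)}\abs{Dw}^{q_0}\Bigr)^{1/q_0}\ \le\ C\kappa^{-n}\fint_{\cD(\bar x,r)}\abs{\vec F}\ \le\ C\kappa^{-n}\bigl(M\,\omega_{\mathbf A}(2r)+\omega_{\vec g}(2r)\bigr),
\]
where the last inequality uses $\fint_{\cD(\bar x,r)}\abs{\vec F}\le M\fint_{\cD(\bar x,r)}\abs{\mathbf A-\bar{\mathbf A}}+\fint_{\cD(\bar x,r)}\abs{\vec g-\bar{\vec g}}$ and $\diam\cD(\bar x,r)\le 2r$.

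Inserting $Du=Dv+Dw$ into $\phi(\bar x,\kappa r)$ and using the two displays, one obtains $\phi(\bar x,\kappa r)\le C_0\kappa\,\phi(\bar x,r)+C_1\kappa^{-n}\bigl(M\,\omega_{\mathbf A}(2r)+\omega_{\vec g}(2r)\bigr)$ for $\kappa\in(0,\tfrac14]$. Choosing $\kappa$ so small that $C_0\kappa\le\tfrac12$ turns this into $\phi(\bar x,\kappa r)\le\tfrac12\phi(\bar x,r)+\widetilde\omega(r)$ on $(0,r_0]$, with $\widetilde\omega:=C(M\omega_{\mathbf A}+\omega_{\vec g})$ a Dini function; summing along $r=\kappa^k r_0$ yields $\sum_{k\ge0}\phi(\bar x,\kappa^k r_0)\le C\phi(\bar x,r_0)+C\int_0^{2r_0}\widetilde\omega(s)\,s^{-1}\,ds<\infty$, the tail tending to $0$ as $r_0\to0$ uniformly in $\bar x$. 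A telescoping argument then shows $(Du)_{\cD(\bar x,r)}$ has a limit $Du(\bar x)$ with $\bigabs{Du(\bar x)-(Du)_{\cD(\bar x,r)}}\lesssim\sum_{k:\,\kappa^k r_0<r}\phi(\bar x,\kappa^k r_0)$, a uniform modulus of continuity for $Du$ at points of $T(0,1)$; together with the interior $C^1$-regularity and a covering this gives $u\in C^1(\overline{B^+_1})$. Finally, since $\phi(\bar x,r_0)$ and $\abs{(Du)_{\cD(\bar x,r_0)}}$ are bounded by $C(r_0)\norm{Du}_{L^2(B^+_4)}$, choosing $r_0$ so small that $CM\int_0^{2r_0}\omega_{\mathbf A}(s)s^{-1}\,ds\le\tfrac12 M$ allows $M$ to be absorbed into the left side, giving a bound on $\norm{Du}_{C(\overline{B^+_1})}$ and on its modulus of continuity in terms of the data alone; this bound is stable under the mollification, so the conclusion follows by letting the regularization parameter tend to $0$.

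The main obstacle is the cross term $(\bar a^{ij}-a^{ij})D_ju$ in $\vec F$: a genuine contraction requires the small factor $\omega_{\mathbf A}(2r)$ in front of it, which forces one to pull $\norm{Du}_{L^\infty}$ out of that integral — a quantity not controlled a priori by the hypotheses. This is exactly why the argument is organized as an a priori estimate followed by mollification, with $M$ reabsorbed at the end using the smallness of $\int_0^{r_0}\omega_{\mathbf A}(s)s^{-1}\,ds$; an alternative is to iterate a coupled system that also tracks $\sup_{\bar x}\fint_{\cD(\bar x,r)}\abs{Du}$. A secondary technical point is the weak-$(1,1)\Rightarrow L^{q_0}$ passage with $q_0<1$, which is unavoidable since a weak-$(1,1)$ bound gives no control on the $L^1$-average of $Dw$.
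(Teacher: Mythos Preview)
Your strategy---freeze coefficients on $\cD(\bar x,r)$, control the correction $w$ in weak-$L^1$ via Lemma~\ref{lem02-weak11}, iterate a Campanato-type quantity, and absorb the a~priori bound $M=\|Du\|_{L^\infty}$ at the end by choosing the initial radius small---is exactly the paper's. The reduction of the constant-coefficient piece to the Laplacian by a half-space-preserving linear map is legitimate (write $P$ block upper-triangular and solve $P^TP=\bar A_{\mathrm{sym}}^{-1}$ via the Schur complement), and odd reflection then applies.

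There is one gap. Odd reflection gives, for the extended harmonic function $\hat v$, the interior bound $\|D^2\hat v\|\lesssim r^{-1}\inf_{\vec Q}(\fint_{B_r}|D\hat v-\vec Q|^{q_0})^{1/q_0}$; but since $D\hat v(y',-y_n)=R\,D\tilde v(y',y_n)$ with $R=\operatorname{diag}(-1,\dots,-1,1)$, the full-ball integrand on the reflected half is $|D\tilde v-R\vec Q|^{q_0}$. Thus the infimum over $\vec Q$ effectively forces $R\vec Q=\vec Q$, i.e.\ $\vec Q=q\,e_n$, and what you actually obtain is $\osc_{\kappa r}Dv\lesssim\kappa\,\varphi(\bar x,r)$ with $\varphi(\bar x,r)=\inf_{q\in\bR}(\fint|Du-qe_n|^{q_0})^{1/q_0}$---precisely the paper's auxiliary quantity \eqref{eq12.14fop}---and not the stronger bound with your $\phi$ allowing arbitrary $\vec q\in\bR^n$. (The stronger bound is in fact true, but needs a separate argument exploiting $D_{x'}v=0$ on $T$; it does not follow from odd reflection alone.) The paper runs the boundary iteration in $\varphi$ (Lemma~\ref{lem-01}) for exactly this reason, and then patches $\varphi$ with the interior $\phi$ through the three-case analysis of Lemma~\ref{lem-02}; your ``covering'' would have to encode the same trichotomy, since the interior modulus by itself degenerates as $x\to T$. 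The $M$-absorption you sketch is carried out in the paper as the dyadic telescoping of Lemma~\ref{lem-03}.
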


The rest of this subsection is devoted to the proof of Proposition~\ref{prop01}.
The proof of Proposition~\ref{prop01} is in the spirit of Campanato's method~\cite{Campanato} as presented in a modern textbook \cite{GM2012}.
We shall derive an a priori estimate of the modulus of continuity of $Du$ by assuming that $u$ is in $C^1(\overline B{}^+_3)$.
The general case follows from a standard approximation argument.

Fix any $p \in (0,1)$.
For $x \in B^+_4$ and  $r>0$, we define
\begin{equation}				\label{eq12.14fo}
\phi(x,r):=\inf_{\vec q \in \bR^n} \left( \fint_{B(x,r) \cap B^{+}_4} \,\abs{Du - \vec q}^p \right)^{\frac1p}
\end{equation}
and choose a vector $\vec q_{x,r}\in \bR^n$ satisfying
\begin{equation}				\label{eq0955w}
\phi(x,r) = \left( \fint_{B(x,r) \cap B^{+}_4} \,\abs{Du - \vec q_{x,r}}^p \right)^{\frac1p}.
\end{equation}
Also, for $\bar x \in T(0,4)$ and $r>0$, we introduce an auxiliary quantity
\begin{equation}				\label{eq12.14fop}
\varphi(\bar x, r):=\inf_{q\in \bR}\, \left( \fint_{B^{+}(\bar x,r) \cap B^{+}_4} \,\abs{Du - q \vec e_n}^p \right)^{\frac1p}\quad (\vec e_n=(0,\ldots,0,1) \in \bR^n)
\end{equation}
and fix a number $\bar q_{\bar x, r} \in \bR$ satisfying
\begin{equation}				\label{eq0955wp}
\varphi(\bar x, r) = \left( \fint_{B^{+}(\bar x,r) \cap B^{+}_4} \,\abs{Du - \bar q_{\bar x, r}\, \vec e_n}^p \right)^{\frac1p}.
\end{equation}
We present a series of lemmas (and their proofs) that will provide key estimates for the proof of Proposition~\ref{prop01}.
\begin{lemma}						\label{lem-01}
Let $\beta \in (0,1)$.
For any $\bar x \in T(0,3)$ and $0<\rho \le r \le \frac12$, we have
\begin{equation}					\label{eq1225f}
\varphi(\bar x, \rho) \le C\left(\frac{\rho}{r}\right)^{\beta}\,r^{-n} \norm{Du}_{L^1(B^{+}(\bar x, r))}+C \norm{Du}_{L^\infty(B^{+}(\bar x,2r))}\,\tilde\omega_{\mathbf{A}}(2\rho) + C \tilde \omega_{\vec g}(2\rho),
\end{equation}
where $C=C(n, \lambda, \Lambda, p, \beta)$ are constants and  $\tilde\omega_\bullet(t)$ is a Dini function derived from $\omega_\bullet(t)$.
\end{lemma}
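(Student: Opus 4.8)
The plan is to run Campanato's freezing scheme against the flat boundary $T(0,4)$. As the excerpt indicates, I first assume $u\in C^1(\overline B{}^+_3)$ and prove \eqref{eq1225f} with a constant independent of this a priori bound; the general case follows by approximation. The argument splits into a one-scale freezing estimate and an iteration of it over the dyadic scales between $\rho$ and $r$; the iteration is what manufactures both the factor $(\rho/r)^\beta$ and the derived Dini moduli $\tilde\omega_{\mathbf A}$, $\tilde\omega_{\vec g}$.

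\emph{One-scale estimate.} Fix $\bar x\in T(0,3)$ and $0<s\le\tfrac12$, so that $B^+(\bar x,t)\subset\cD(\bar x,s)\subset B^+(\bar x,s)\subset B^+_4$ for $t\le s/2$ and the flat part of $\partial\cD(\bar x,s)$ is $T(\bar x,s/2)$. Put $\bar{\mathbf A}:=\fint_{\cD(\bar x,s)}\mathbf A$ and, since a constant vector field is divergence free, subtract $\bar{\vec g}_{\cD(\bar x,s)}$ from $\vec g$ without changing the equation. On $\cD(\bar x,s)$ write $u=v+w$ with $w\in W^{1,2}_0(\cD(\bar x,s))$ solving $D_i(\bar a^{ij}D_jw)=D_i\big((\bar a^{ij}-a^{ij})D_ju+g^i\big)$, so $v:=u-w$ solves $D_i(\bar a^{ij}D_jv)=0$ in $\cD(\bar x,s)$ with $v=0$ on $T(\bar x,s/2)$. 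For $w$ I apply the scaled form of Lemma~\ref{lem02-weak11} together with the standard weak-$L^1$-to-$L^p$ passage on a finite-measure set ($0<p<1$) and the volume comparison $|\cD(\bar x,s)|\approx|B^+(\bar x,s)|$ to get $\big(\fint_{\cD(\bar x,s)}|Dw|^p\big)^{1/p}\le C\fint_{\cD(\bar x,s)}|(\bar{\mathbf A}-\mathbf A)Du+\vec g|\le C\norm{Du}_{L^\infty(B^+(\bar x,s))}\,\omega_{\mathbf A}(s)+C\,\omega_{\vec g}(s)$. For $v$ I reduce the frozen equation to the Laplacian by a linear change of variables fixing $\set{x^n=0}$ and extend by odd reflection across it (legitimate since $v=0$ on the flat part); interior estimates for harmonic functions then give that $Dv$ is smooth up to $T(\bar x,s/2)$, that the tangential components $D'v$ vanish there — hence $Dv(\bar x)=(D_nv)(\bar x)\vec e_n$ — and that $\sup_{B^+(\bar x,s/4)}|D^2v|\le Cs^{-1}\big(\fint_{\cD(\bar x,s)}|Dv-\bar q\,\vec e_n|^p\big)^{1/p}$ for every $\bar q\in\bR$, since $v-\bar q\,x^n$ satisfies the same homogeneous problem. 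Taking $\bar q=\bar q_{\bar x,s}$ from \eqref{eq0955wp}, using $\big(\fint_{\cD(\bar x,s)}|Dv-\bar q\vec e_n|^p\big)^{1/p}\le C\varphi(\bar x,s)+C\big(\fint_{\cD(\bar x,s)}|Dw|^p\big)^{1/p}$, the competitor $\vec q=(D_nv)(\bar x)\vec e_n$ in \eqref{eq12.14fop}, and $|Dv(y)-Dv(\bar x)|\le|y-\bar x|\sup_{B^+(\bar x,s/4)}|D^2v|$, I arrive, for all $0<t\le\theta s$ with $\theta$ a small fixed ratio, at
\[
\varphi(\bar x,t)\le C_0\,\tfrac{t}{s}\,\varphi(\bar x,s)+C_1\norm{Du}_{L^\infty(B^+(\bar x,s))}\,\omega_{\mathbf A}(s)+C_1\,\omega_{\vec g}(s),
\]
with $C_0,C_1=C(n,\lambda,\Lambda,p)$ (the factor $(s/t)^{n/p}=\theta^{-n/p}$ coming from $B^+(\bar x,t)\subset\cD(\bar x,s)$ being absorbed once $\theta$ is fixed).

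\emph{Iteration.} Given $\beta\in(0,1)$, pick $\theta\in(0,1)$ so small that $C_0\theta\le\theta^\beta$, set $s_k:=\theta^kr$, and iterate the displayed inequality with $s=s_k$, $t=s_{k+1}$, using $B^+(\bar x,s_k)\subset B^+(\bar x,2r)$ and $\varphi(\bar x,r)\le\big(\fint_{B^+(\bar x,r)}|Du|^p\big)^{1/p}\le Cr^{-n}\norm{Du}_{L^1(B^+(\bar x,r))}$. After $N$ steps the accumulated coefficient error is $\sum_{k=0}^{N-1}\theta^{\beta(N-1-k)}\omega_\bullet(s_k)\le C\tilde\omega_\bullet(s_N)$ with $\tilde\omega_\bullet(\tau):=\sum_{j\ge0}\theta^{\beta j}\omega_\bullet(\theta^{-j}\tau)$ (the moduli extended by a constant past their domain); this $\tilde\omega_\bullet$ again obeys \eqref{eq10.31tu} and has finite Dini integral because $\sum_{j\ge0}\theta^{\beta j}(j+1)<\infty$. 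For a general $\rho\in(0,r]$ I take $N$ with $\theta^{N+1}r<\rho\le\theta^Nr$ and use $\varphi(\bar x,\rho)\le C\varphi(\bar x,\theta^Nr)$, $\theta^{N\beta}\le C(\rho/r)^\beta$, and $\tilde\omega_\bullet(\theta^Nr)\le C\tilde\omega_\bullet(\rho)\le C\tilde\omega_\bullet(2\rho)$ (all via bounded iteration of \eqref{eq10.31tu}), which yields \eqref{eq1225f}.

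\emph{Main obstacle.} The delicate point is reconciling the two ingredients: the weak type-$(1,1)$ estimate for $w$ forces the decomposition onto the smooth domains $\cD(\bar x,s)$ (so that Lemma~\ref{lem02-weak11} applies after scaling), yet $v=u-w$ must still inherit the homogeneous Dirichlet condition on a genuinely \emph{flat} boundary piece $T(\bar x,s/2)$ for the odd reflection to yield $D'v=0$ there — and it is exactly this that makes $\varphi$, built from $Du-q\vec e_n$, the correct quantity to iterate. Keeping the nested inclusions $B^+(\bar x,t)\subset\cD(\bar x,s)\subset B^+(\bar x,s)$ and their volume comparisons consistent, and checking that $\tilde\omega_{\mathbf A}$, $\tilde\omega_{\vec g}$ remain Dini, is the bulk of the bookkeeping; note that the use of the \emph{weak}-$(1,1)$ (rather than, say, an $L^2$) estimate for $w$ is precisely what lets the errors be controlled by the $L^1$-based moduli $\omega_{\mathbf A}$, $\omega_{\vec g}$ in the first place.
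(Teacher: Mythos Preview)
Your proposal is correct and follows essentially the same Campanato freezing-plus-iteration scheme as the paper: decompose $u=v+w$ on (a scaled copy of) $\cD$, control $Dw$ in $L^p$ ($p<1$) via the weak type-$(1,1)$ estimate of Lemma~\ref{lem02-weak11}, obtain a $C^{1,1}$-type bound for the constant-coefficient piece $v$, and iterate to produce $(\rho/r)^\beta$ and the derived moduli $\tilde\omega_\bullet$. The bookkeeping differences (you freeze on $\cD(\bar x,s)$ and average there, the paper freezes on $\cD(\bar x,2r)$ and averages over $B^+(\bar x,2r)$; you use the pointwise competitor $(D_nv)(\bar x)\vec e_n$, the paper uses $\overline{D_nv}_{B^+(\bar x,\kappa r)}$) are immaterial.

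The one substantive difference worth flagging is how you obtain the boundary estimate $\|D^2v\|_{L^\infty}\lesssim r^{-1}(\fint|Dv-q\vec e_n|^p)^{1/p}$. You pass by a linear change of variables fixing $\{x^n=0\}$ to the Laplacian and odd-reflect; the paper instead differentiates $v$ tangentially (each $D_jv$, $j<n$, satisfies the same constant-coefficient equation with zero Dirichlet data) and recovers $D_{nn}v$ from the equation. Both routes are standard and equivalent for the scalar problem at hand. The paper's route has the mild advantage of transferring verbatim to elliptic \emph{systems} (as it claims at the end of the introduction), since for a genuine system one cannot in general reduce the frozen operator to the Laplacian by a linear change of variables; your reflection argument, as written, uses that reduction.
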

\begin{proof}
Note that we have $B^{+}(\bar x, 2r) \subset B^+_4$ and
\begin{equation}				\label{eq12.14f}
\varphi(\bar x, r) \le \left(\fint_{B^{+}(\bar x,r)} \abs{Du}^p\right)^{\frac1p} \lesssim r^{-n} \norm{Du}_{L^1(B^{+}(\bar x, r))}.
\end{equation}
We decompose $u=v+w$, where $w \in W^{1,2}_0(\cD(\bar x,2r))$ is the solution of the problem
\[
\dv (\bar{\mathbf{A}} \nabla w) = -\dv ((\mathbf{A}- \bar{\mathbf{A}}) \nabla u) +\dv (\vec g-\bar{\vec g})\;\mbox{ in }\;\cD(\bar x,2r);\quad
w=0 \;\mbox{ on }\;\partial \cD(\bar x,2r).
\]
Here and below, we use the simplified notation
\[
\bar{\mathbf{A}}=\bar{\mathbf{A}}_{B^{+}(\bar x,2r)},\quad \bar{\vec g} = \bar{\vec g}_{B^{+}(\bar x,2r)}.
\]
By Lemma~\ref{lem02-weak11} with scaling, for any $t>0$, we have
\[
\abs{\set{x\in B^{+}(\bar x,r): \abs{Dw(x)} > t}}
\lesssim \frac{1}{t}\left(\norm{Du}_{L^\infty(B^{+}(\bar x,2r))} \int_{B^{+}(\bar x,2r)} \abs{\mathbf{A}-\bar{\mathbf{A}}} +  \int_{B^{+}(\bar x,2r)}  \abs{\vec g -\bar {\vec g}} \,\right),
\]
where we used $B^{+}(\bar x,r) \subset \cD(\bar x,2r) \subset B^{+}(\bar x,2r)$.
Then, we have (cf. \cite[(2.11)]{DK16})
\begin{equation}				\label{eq15.50a}
\left(\fint_{B^{+}(\bar x,r)} \abs{Dw}^p \right)^{\frac1p} \lesssim \omega_{\mathbf{A}}(2r) \,\norm{Du}_{L^\infty(B^{+}(\bar x,2r))} +  \omega_{\vec g}(2r).
\end{equation}
On the other hand, $v=u-w$ satisfies
\begin{equation}				\label{eq08.48th}
\dv(\bar{\mathbf{A}} \nabla v) =0 \;\mbox{ in }\;  B^{+}(\bar x,r);\quad v=0 \;\mbox{ on }\;  T(\bar x,r).
\end{equation}
Note that the same is satisfied by $D_j v$ for $j=1, \ldots, n-1$.
By standard boundary estimates for elliptic equations (or systems) with constant coefficients, we have
\[
\norm{D D_{j} v}_{L^\infty(B^{+}(\bar x, \frac12 r))} \lesssim r^{-1} \left(\fint_{B^{+}(\bar x,r)} \abs{D_jv}^p\,\right)^{\frac1p} \lesssim r^{-1} \left(\fint_{B^{+}(\bar x,r)} \abs{D_{x'}v}^p\,\right)^{\frac1p},
\]
where $\abs{D_{x'} v}^2 := \sum_{j=1}^{n-1} (D_j v)^2$.
Since
\[
D_{nn} v = -\frac{1}{\bar a^{nn}} 
\sum_{(i,j)\neq (n,n)}\bar a^{ij} D_{ij}v,
\]
we obtain
\begin{equation}			\label{eq9.33th}
\norm{D^2 v}_{L^\infty(B^{+}(\bar x, \frac12 r))}\lesssim r^{-1} \left(\fint_{B^{+}(\bar x, r)} \abs{D_{x'}v}^p\,\right)^{\frac1p}.
\end{equation}
Therefore, we have
\begin{equation}			\label{int-reg}
\norm{D^2 v}_{L^\infty(B^{+}(\bar x, \frac12 r))}\lesssim  r^{-1} \left(\fint_{B^{+}(\bar x,r)} \abs{Dv -q\vec e_n}^p\,\right)^{\frac1p}, \quad \forall q \in \bR.
\end{equation}
Let $0< \kappa < \frac12$ to be a number to be fixed later.
Note that we have
\[
\left(\fint_{B^{+}(\bar x,\kappa r)} \abs{D_nv - \overline{D_n v}_{B^{+}(\bar x,\kappa r)}}^p \right)^{\frac1p} \le 2\kappa r \norm{D^2 v}_{L^\infty(B(\bar x, \frac12 r))},
\]
while, for $j=1, \ldots, n-1$, we have
\[
\left(\fint_{B^{+}(\bar x,\kappa r)} \abs{D_j v}^p \right)^{\frac1p} = \left(\fint_{B^{+}(\bar x,\kappa r)} \abs{D_j v - D_jv(\bar x)}^p \right)^{\frac1p} \le 2\kappa r \norm{D^2 v}_{L^\infty(B(\bar x, \frac12 r))}.
\]
Hence, by \eqref{int-reg} we obtain
\begin{equation}				\label{eq15.50b}
\left(\fint_{B^{+}(\bar x,\kappa r)} \abs{Dv - \overline{D_n v}_{B^{+}(\bar x,\kappa r)} \, \vec e_n}^p \right)^{\frac1p}
\le C_0 \kappa \left(\fint_{B^{+}(\bar x, r)} \abs{Dv -q \vec e_n}^p \, \right)^{\frac1p},\quad \forall q \in \bR,
\end{equation}
where $C_0$ is an absolute constant determined only by $n$, $\lambda$, $\Lambda$, and $p$.
By using the decomposition $u=v+w$, we obtain from \eqref{eq15.50b} that
\begin{align*}
&\left(\fint_{B^{+}(\bar x, \kappa r)} \abs{Du - \overline{D_n v}_{B^{+}(\bar x,\kappa r)}\, \vec e_n}^p \right)^{\frac1p} \\
&\qquad\qquad \le 2^{\frac{1-p}{p}} \left(\fint_{B^{+}(\bar x, \kappa r)} \abs{Dv - \overline{D_n v}_{B^{+}(\bar x,\kappa r)}\, \vec e_n}^p \right)^{\frac1p}+ C\left(\fint_{B^{+}(\bar x, \kappa r)} \abs{Dw}^p \right)^{\frac1p}\\
&\qquad \qquad \le  4^{\frac{1-p}p}C_0\kappa \left(\fint_{B^{+}(\bar x, r)} \abs{Du - q \vec e_n}^p \right)^{\frac1p} +C (\kappa^{-\frac{n}{p}}+1) \left(\fint_{B^{+}(\bar x, r)} \abs{Dw}^p \right)^{\frac1p}.
\end{align*}
Since $q\in \bR$ is arbitrary, by using \eqref{eq15.50a}, we thus obtain
\[
\varphi(\bar x,\kappa r) \le 4^{\frac{1-p}{p}} C_0 \kappa \, \varphi(\bar x,r)+ C (\kappa^{-\frac{n}{p}}+1) \left( \omega_{\mathbf{A}}(2r) \,\norm{Du}_{L^\infty(B^{+}(\bar x,2r))} +  \omega_{\vec g}(2r) \right).
\]
For any given $\beta \in (0,1)$, let $\kappa \in (0,\frac12)$ be sufficiently small so that $4^{\frac{1-p}{p}} C_0 \le \kappa^{\beta-1}$.
Then, we obtain
\[
\varphi(\bar x,\kappa r) \le \kappa^{\beta} \varphi(\bar x,r)+ C \left( \omega_{\mathbf{A}}(2r) \,\norm{Du}_{L^\infty(B^{+}(\bar x, 2r))} +  \omega_{\vec g}(2r) \right).
\]
Note that $\kappa^\beta <1$.
By iterating, for $j=1,2,\ldots$, we get
\[
\varphi(\bar x,\kappa^j r) \le \kappa^{j \beta} \varphi(\bar x,r)
+C \left(\norm{Du}_{L^\infty(B^{+}(\bar x, 2r))} \sum_{i=1}^{j} \kappa^{(i-1)\beta} \omega_{\mathbf{A}}(2\kappa^{j-i} r) + \sum_{i=1}^{j} \kappa^{(i-1)\beta} \omega_{\vec g}(2\kappa^{j-i} r) \right).
\]
Therefore, we have
\begin{equation}				\label{eq22.25f}
\varphi(\bar x,\kappa^j r) \le \kappa^{j \beta} \varphi(\bar x,r) +C \norm{Du}_{L^\infty(B^{+}(\bar x,2r))}\,\tilde \omega_{\mathbf{A}}(2\kappa^{j} r) + C \tilde \omega_{\vec g}(2\kappa^{j} r),
\end{equation}
where we set
\begin{equation}				\label{eq14.27w}
\tilde \omega_{\bullet}(t)= \sum_{i=1}^{\infty} \kappa^{i \beta} \left(\omega_{\bullet}(\kappa^{-i}t)\, [\kappa^{-i}t \le 1] + \omega_{\bullet}(1)\, [\kappa^{-i}t >1] \right).
\end{equation}
Here, we used Iverson bracket notation; i.e., $[P] = 1$ if $P$ is true and $[P] = 0$
otherwise.
We recall that $\tilde \omega_{\bullet}(t)$ is a Dini function; see \cite[Lemma~1]{Dong2012}.

Now, for any $\rho$ satisfying $0<\rho \le r$, we take $j$ to be the integer satisfying $\kappa^{j+1} < \rho/r \le \kappa^j$.
Then, by \eqref{eq22.25f}
\begin{equation}				\label{eq0947f}
\varphi(\bar x, \rho)  \le  C \left(\frac{\rho}{r}\right)^{\beta} \varphi(\bar x, \kappa^{-j} \rho) +C \norm{Du}_{L^\infty(B^{+}(\bar x,2r))}\,\tilde \omega_{\mathbf{A}}(2\rho) + C \tilde \omega_{\vec g}(2\rho).
\end{equation}
Therefore, we get \eqref{eq1225f} from \eqref{eq0947f} and \eqref{eq12.14f}.
\end{proof}

\begin{lemma}					\label{lem-02}
Let $\beta \in (0,1)$.
For any $x \in B^+_3$ and $0<\rho\le r \le \frac14$, we have
\begin{equation}					\label{eq10.39tu}
\phi(x, \rho) \le  C\left(\frac{\rho}{r}\right)^{\beta}\, r^{-n} \norm{Du}_{L^1(B(x ,3r)\cap B^+_4)}+C\norm{Du}_{L^\infty(B(x,5r)\cap B^+_4)}\, \hat\omega_{\mathbf{A}}(\rho) + C \hat\omega_g(\rho),
\end{equation}
where $C=C(n, \lambda, \Lambda,p, \beta)$ are constants and $\hat\omega_\bullet(t)$ is a Dini function derived from $\omega_\bullet(t)$.
\end{lemma}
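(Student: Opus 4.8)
The plan is to imitate the proof of Lemma~\ref{lem-01}, but now the relevant quantity is the interior one $\phi$, and the argument must be split according to how close $x$ lies to the flat boundary. Fix $\beta\in(0,1)$, write $d:=x^n=\dist(x,\{x^n=0\})$, and argue (as in the proof of Proposition~\ref{prop01}) under the a priori assumption that $Du$ is bounded on the sets that appear, the general case following by approximation. Also let $\bar x:=(x',0)\in T(0,3)$ (legitimate since $\abs{x'}<3$).

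\emph{Case 1: $d\ge r$.} Since $\abs{x}<3$ and $r\le\tfrac14$, for every $s\le r$ the ball $B(x,s)$ coincides with $B(x,s)\cap B^+_4$ up to a null set, so the situation is purely interior. I would run the Campanato iteration of Lemma~\ref{lem-01} with balls in place of half-balls: at scale $s\le r$ write $u=v+w$ on $B(x,s)$ with $w\in W^{1,2}_0(B(x,s))$ solving
\[
\dv(\bar{\mathbf A}\nabla w)=-\dv((\mathbf A-\bar{\mathbf A})\nabla u)+\dv(\vec g-\bar{\vec g}),\qquad \bar{\mathbf A}=\bar{\mathbf A}_{B(x,s)},\ \ \bar{\vec g}=\bar{\vec g}_{B(x,s)};
\]
bound $Dw$ by the interior weak type-$(1,1)$ estimate (\cite[Lemma~2.2]{DK16} together with Lemma~\ref{lem01-stein}), and use the standard interior bound $\norm{D^2v}_{L^\infty(B(x,s/2))}\lesssim s^{-1}(\fint_{B(x,s)}\abs{Dv-\vec q}^p)^{1/p}$, valid for the $\bar{\mathbf A}$-harmonic function $v$ and any $\vec q\in\bR^n$. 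This gives the one-step inequality $\phi(x,\kappa s)\le C_0\kappa\,\phi(x,s)+C\norm{Du}_{L^\infty(B(x,s)\cap B^+_4)}\omega_{\mathbf A}(s)+C\omega_g(s)$; choosing $\kappa\in(0,\tfrac12)$ with $C_0\kappa\le\kappa^{\beta}$, iterating, and invoking the summation fact \cite[Lemma~1]{Dong2012} that $\sum_i\kappa^{i\beta}\omega_\bullet(\kappa^{-i}\,\cdot\,)$ is again a Dini function yields \eqref{eq10.39tu}, the $L^1$ term coming from $\phi(x,\kappa^{-j}\rho)\lesssim r^{-n}\norm{Du}_{L^1(B(x,r)\cap B^+_4)}$.

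\emph{Case 2: $d<r$.} If $\rho\ge d$, then $B(x,\rho)\cap B^+_4\subset B^+(\bar x,2\rho)$ and these sets have comparable measure, so $\phi(x,\rho)\le C\varphi(\bar x,2\rho)$; applying Lemma~\ref{lem-01} with the pair $(2\rho,2r)$ (admissible since $2r\le\tfrac12$) and using $B^+(\bar x,2r)\subset B(x,3r)$, $B^+(\bar x,4r)\subset B(x,5r)$, $(2r)^{-n}\le r^{-n}$, and $\tilde\omega_\bullet(4\rho)\le\hat\omega_\bullet(\rho)$ (after enlarging $\hat\omega_\bullet$) gives \eqref{eq10.39tu}. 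If instead $\rho<d\,(<r)$, then $B(x,\rho)$ is interior and I run the Case~1 iteration from scale $\rho$ up to a top scale $\tilde d\in(\kappa d,d]$, obtaining
\[
\phi(x,\rho)\le C\Big(\tfrac{\rho}{d}\Big)^{\beta}\phi(x,\tilde d)+C\norm{Du}_{L^\infty(B(x,d)\cap B^+_4)}\tilde\omega_{\mathbf A}(\rho)+C\tilde\omega_g(\rho).
\]
Since $B(x,\tilde d)\cap B^+_4\subset B^+(\bar x,2d)$ with comparable measures, $\phi(x,\tilde d)\le C\varphi(\bar x,2d)$, and Lemma~\ref{lem-01} with the pair $(2d,2r)$ bounds this by $C(d/r)^{\beta}r^{-n}\norm{Du}_{L^1(B(x,3r)\cap B^+_4)}+C\norm{Du}_{L^\infty(B(x,5r)\cap B^+_4)}\tilde\omega_{\mathbf A}(4d)+C\tilde\omega_g(4d)$. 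Multiplying through by $(\rho/d)^{\beta}$, the powers of $d$ cancel to produce exactly $(\rho/r)^{\beta}$, and $(\rho/d)^{\beta}\tilde\omega_\bullet(4d)$ with $\rho\le d\le r$ is absorbed into a Dini function $\hat\omega_\bullet(\rho)$ by the same summation lemma; this yields \eqref{eq10.39tu}, provided $\hat\omega_\bullet$ is taken large enough to dominate the contributions of all cases.

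The step I expect to be the main obstacle is precisely the bookkeeping in the sub-case $\rho<d<r$: one must verify that composing the interior decay (from scale $\rho$ to scale $d$) with the boundary decay supplied by Lemma~\ref{lem-01} (from scale $d$ to scale $r$) leaves no non-decaying remainder, i.e.\ that the $d$-powers cancel cleanly between the two legs and that $(\rho/d)^{\beta}\tilde\omega_\bullet(d)$ is controlled by a Dini function of $\rho$ alone. Everything else — the constant-coefficient interior estimates, the elementary measure comparisons between $B(x,s)\cap B^+_4$ and $B^+(\bar x,cs)$, the choice of $\kappa$, and the reduction from the a priori bound to the general statement by mollification — runs exactly parallel to the proof of Lemma~\ref{lem-01}.
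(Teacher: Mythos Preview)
Your proposal is correct and follows essentially the same approach as the paper: the same three-case split according to the relative sizes of $\rho$, $d=x^n$, and $r$ (your Case~1 is the paper's case~i, your two sub-cases of Case~2 are the paper's cases~ii and~iii), the same reduction $\phi(x,\cdot)\lesssim\varphi(\bar x,2\cdot)$ via measure comparison, and the same composition of the interior iteration with Lemma~\ref{lem-01} in the mixed sub-case. The obstacle you single out---that $(\rho/d)^\beta\tilde\omega_\bullet(4d)$ must be dominated by a Dini function of $\rho$ alone---is exactly the point the paper isolates by defining $\omega^\sharp_\bullet(t):=\sup_{s\in[t,1]}(t/s)^\beta\tilde\omega_\bullet(s)$ and verifying via \cite[Lemma~1]{Dong2012} that $\omega^\sharp_\bullet$ is Dini.
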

\begin{proof}
In this proof we shall denote
\[
\bar x= (x^1, \ldots, x^{n-1}, 0).
\]
There are three possibilities.
\begin{enumerate}[i.]
\item
$\rho \le r \le x^n$\,:\,
We utilize an interior $C^1$ estimate developed in \cite{DK16} as follows.
Since $B(x, r) \subset B^+_4$, we observe that $\phi(x,\rho)$ is identical to that introduced in the proof of \cite[Theorem 1.5]{DK16}. 
We recall that it satisfies
\[
\phi(x,\kappa r) \le 2^{\frac2p-1}C_0 \kappa \, \phi(x, r)+ C (\kappa^{-\frac{d}{p}}+1) \left( \omega_A(r) \,\norm{Du}_{L^\infty(B(x, r))} +  \omega_g(r) \right).
\]
Therefore, similar to \eqref{eq22.25f}, we get
\begin{equation}				\label{eq22.25fo}
\phi(x, \kappa^j r) \le \kappa^{j \beta} \phi(x, r) +C \norm{Du}_{L^\infty(B(x, r))}\,\tilde \omega_A(\kappa^{j} r) + C \tilde \omega_g(\kappa^{j} r),
\end{equation}
where $\tilde\omega_\bullet$ is as defined in \eqref{eq14.27w} and we take $\kappa$ in \eqref{eq22.25fo} and \eqref{eq22.25f} to be identical.
Then we get an inequality similar to \eqref{eq0947f}, namely, for $0<\rho \le r \le \frac14$
\begin{equation}				\label{eq0946fo}
\phi(x, \rho)  \le C \left(\frac{\rho}r\right)^{\beta}\, \phi(x, \kappa^{-j} \rho) +C \norm{Du}_{L^\infty(B(x,r))}\,\tilde \omega_{\mathbf{A}}(\rho) + C \tilde \omega_{\vec g}(\rho),
\end{equation}
where $j$ is the integer satisfying $\kappa^{j+1} < \rho/r \le \kappa^j$.
By \eqref{eq12.14fo}, we have
\[
\phi(x, \kappa^{-j} \rho) \le \left(\fint_{B(x, \kappa^{-j}\rho)} \abs{Du}^p\right)^{\frac1p} \le \kappa^{-n} \fint_{B(x,r)} \abs{Du}\,,
\]
and thus, we obtain
\begin{equation}				\label{eq0947fo}
\phi(x, \rho)  \le C \left(\frac{\rho}r\right)^{\beta}\, r^{-n}\norm{Du}_{L^1(B(x,r))} +C \norm{Du}_{L^\infty(B(x,r))}\,\tilde \omega_{\mathbf{A}}(\rho) + C \tilde \omega_{\vec g}(\rho).
\end{equation}

\item
$x^n \le \rho \le r$\,:\,
Since $B(x, \rho) \cap B^{+}_4 \subset B^{+}(\bar x, 2\rho) \subset B^+_4$, we have
\begin{align}					\nonumber
\phi(x, \rho) &=\left( \fint_{B(x, \rho) \cap B^{+}(0,4)} \,\abs{Du - \vec q_{x, \rho}}^p \right)^{\frac1p}  \le \left( \fint_{B(x, \rho) \cap B^{+}(0,4)} \,\abs{Du - \bar q_{\bar x, 2\rho}\, \vec e_n}^p \right)^{\frac1p} \\
							\label{eq1448fr}
&\le  2^{\frac{n}p} \left( \fint_{B^{+}(\bar x, 2\rho)} \,\abs{Du - \bar q_{\bar x, 2\rho}\,\vec e_n}^p \right)^{\frac1p} = 2^{\frac{n}{p}} \varphi(\bar x, 2\rho).
\end{align}
Therefore, by Lemma~\ref{lem-01}, and using $\abs{x-\bar x}=x^n \le r$, we obtain
\begin{align}					\nonumber
\phi(x, \rho) &\le  C \left(\frac{2\rho}{2r}\right)^\beta r^{-n}\norm{Du}_{L^1(B^{+}(\bar x, 2r))}+ C\norm{Du}_{L^\infty(B^{+}(\bar x,4r))}\, \tilde \omega_{\mathbf{A}}(4\rho) + C \tilde \omega_{\vec g}(4\rho)\\
							\label{eq0919tu}
&\le  C \left(\frac{\rho}{r}\right)^\beta r^{-n}\norm{Du}_{L^1(B(x, 3r)\cap B^+_4)}+ C\norm{Du}_{L^\infty(B(x,5r) \cap B^+_4)}\, \tilde\omega_{\mathbf{A}}(4\rho) + C \tilde\omega_{\vec g}(4\rho).
\end{align}
\item
$\rho \le x^n \le r$\,:\,
Take $R=x^n$ and let $j$ be the integer satisfying $\kappa^{j+1} < \rho/R \le \kappa^j$.\,
Since $B(x, \kappa^{-j} \rho) \subset B(x,R) \subset B^+_4$ and $B(x,R) \subset B^{+}(\bar x, 2R)$, we have
\begin{align}					\nonumber
\phi(x, \kappa^{-j} \rho) &= \left( \fint_{B(x, \kappa^{-j} \rho)} \,\abs{Du - \vec q_{x, \kappa^{-j} \rho}}^p \right)^{\frac1p} \le \left( \fint_{B(x, \kappa^{-j} \rho)} \,\abs{Du -  \bar q_{\bar x, 2R}\, \vec e_n}^p \right)^{\frac1p} \\
							\label{eq20.09fr}
&\le 2^{\frac{n-1}p} \kappa^{-\frac{n}{p}}\left( \fint_{B^{+}(\bar x, 2R)} \,\abs{Du - \bar q_{\bar x, 2R}\, \vec e_n}^p \right)^{\frac1p} = 2^{\frac{n-1}p} \kappa^{-\frac{n}{p}} \varphi(\bar x, 2R).
\end{align}
Therefore, by \eqref{eq0946fo} and Lemma~\ref{lem-01}, we get
\begin{align*}
\phi(x, \rho) &\le C\left(\frac{\rho}{R}\right)^{\beta} \varphi(\bar x, 2R) +C \norm{Du}_{L^\infty(B(x,R))}\,\tilde \omega_{\mathbf{A}}(\rho) + C \tilde \omega_{\vec g}(\rho) \\
&\le  C\left(\frac{\rho}{R}\right)^{\beta} \left\{\, \left(\frac{2R}{2r}\right)^{\beta}\,r^{-n} \norm{Du}_{L^1(B^{+}(\bar x, 2r))}+ \norm{Du}_{L^\infty(B^{+}(\bar x,4r))}\,\tilde\omega_{\mathbf{A}}(4R) +\tilde\omega_{\vec g}(4R) \right\}\\
&\qquad \qquad+C \norm{Du}_{L^\infty(B^{+}(\bar x,r))}\,\tilde \omega_{\mathbf{A}}(\rho) + C \tilde \omega_{\vec g}(\rho).
\end{align*}
Therefore, by setting
\[
\omega^{\sharp}_{\bullet}(t):= \sup_{s \in [t,1]} \,\left(\frac{t}{s}\right)^\beta\, \tilde\omega_{\bullet}(s) \quad (0<t\le 1)
\]
and using $\abs{x-\bar x}=x^n \le r$, we obtain
\begin{align}					\nonumber
\phi(x, \rho) \le  C\left(\frac{\rho}r\right)^{\beta}\,& r^{-n}\norm{Du}_{L^1(B(x ,3r)\cap B^+_4)}+C\norm{Du}_{L^\infty(B(x,5r)\cap B^+_4)}\, \omega^\sharp_{\mathbf{A}}(4\rho) + C \omega_g^\sharp(4\rho)\\
							\label{eq14.24m}
&+C\norm{Du}_{L^\infty(B(x,2r) \cap B^+_4)}\, \tilde \omega_{\mathbf{A}}(\rho) + C \tilde \omega_{\vec g}(\rho).
\end{align}

\end{enumerate}
We have covered all three possible cases and obtained bounds for $\phi(x, \rho)$, namely, \eqref{eq0947fo}, \eqref{eq14.24m}, and \eqref{eq0919tu}.
Therefore, if we set $\hat\omega_{\bullet}(t)$ as
\begin{equation*}				
\hat\omega_{\bullet}(t):= \tilde \omega_{\bullet}(t)+\tilde\omega_{\bullet}(4t)+\omega_{\bullet}^{\sharp}(4t),
\end{equation*}
then \eqref{eq10.39tu} follows.
To complete the proof, we only need to show that $\omega^{\sharp}_{\bullet}(t)$ is a Dini function.
By \cite[Lemma~1]{Dong2012}, it is enough to show
\begin{equation}				\label{eq15.39m}
\omega^{\sharp}_{\bullet}(t) \lesssim \sum_{j=0}^{\infty} \frac{1}{2^{j \beta}} \left(\tilde \omega_{\bullet}(2^j t)\, [ 2^j t \le 1] + \tilde\omega_{\bullet}(1)\, [2^j t >1] \right).
\end{equation}
To see this, we first recall $\tilde \omega_{\bullet}$ satisfies the property \eqref{eq10.31tu}.
See Remark~\ref{rmk2.33} below.
Since for any $s \in [t, 1]$, there is an integer $j$ be an integer such that $2^{j-1}t \le s < 2^j t$ and \eqref{eq15.39m} follows from the definition of $\omega^\sharp_\bullet(t)$.
\end{proof}

\begin{remark}			\label{rmk2.33}
It can be easily seen that $\tilde \omega_\bullet$ satisfies the condition \eqref{eq10.31tu}; see \cite{DK16}.
From the construction of $\hat \omega_\bullet$ in the above proof, it is routine to verify that $\hat \omega_\bullet$ satisfies the property \eqref{eq10.31tu} as well.
\end{remark}

\begin{lemma}						\label{lem-03}
We have
\begin{equation}					\label{eq10.23m}
\norm{Du}_{L^\infty(B^+_2)} \le C \norm{Du}_{L^1(B^+_4)} + C\int_0^{1} \frac{\hat \omega_{\vec g}(t)}t \,dt,
\end{equation}
where $C>0$ is a constant depending only on $n$, $\lambda$, $\Lambda$, $p$, and $\omega_{\mathbf{A}}$.
\end{lemma}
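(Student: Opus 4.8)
The plan is to prove \eqref{eq10.23m} as an a priori estimate under the standing assumption that $u \in C^1(\overline{B^+_3})$ (the general case then follows from the approximation argument already announced), by summing the decay estimate of Lemma~\ref{lem-02} over dyadic scales and then killing the $\norm{Du}_{L^\infty}$ term it produces via the classical iteration/absorption lemma. Throughout I would fix some $\beta \in (0,1)$, say $\beta=\tfrac12$, and set $g(\theta):=\norm{Du}_{L^\infty(B^+_\theta)}$, which under the a priori hypothesis is finite, bounded, and nondecreasing for $\theta\in[2,3]$. I would also record the elementary fact that, since $B^+_4$ is a half-ball, $\abs{B(x,\rho)\cap B^+_4}\approx \rho^n$ for $x\in B^+_3$ and $0<\rho\le 1$; this is what makes $\phi(x,\cdot)$ and the minimizing vectors $\vec q_{x,\cdot}$ comparable across dyadic scales.

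The first step is the key inequality: for $2\le s<t\le 3$, with $r:=\tfrac15(t-s)$,
\[
g(s)\le \frac{C\,\norm{Du}_{L^1(B^+_4)}}{(t-s)^n}+C\Bigl(\int_0^{2r}\hat\omega_{\mathbf A}(\tau)\,\frac{d\tau}{\tau}\Bigr)g(t)+C\int_0^1\hat\omega_{\vec g}(\tau)\,\frac{d\tau}{\tau},
\]
with $C=C(n,\lambda,\Lambda,p)$. To obtain it, fix $x\in B^+_s$; since $u\in C^1$ we have $\phi(x,\rho)\to0$ as $\rho\to0$, hence $\vec q_{x,2^{-k}r}\to Du(x)$, so $\abs{Du(x)}\le\abs{\vec q_{x,r}}+\sum_{k\ge0}\bigabs{\vec q_{x,2^{-(k+1)}r}-\vec q_{x,2^{-k}r}}$. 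Using $p<1$, Jensen's inequality and the volume comparison give $\abs{\vec q_{x,r}}\lesssim r^{-n}\norm{Du}_{L^1(B^+_4)}$ and $\bigabs{\vec q_{x,2^{-(k+1)}r}-\vec q_{x,2^{-k}r}}\lesssim \phi(x,2^{-k}r)+\phi(x,2^{-(k+1)}r)$. I would then sum Lemma~\ref{lem-02} over $\rho=2^{-k}r$, $k\ge0$: the geometric factors $(2^{-k})^{\beta}$ sum to a constant, one has $\sum_{k\ge0}\hat\omega_\bullet(2^{-k}r)\lesssim\int_0^{2r}\hat\omega_\bullet(\tau)\,\tau^{-1}\,d\tau$ by the doubling property \eqref{eq10.31tu} of $\hat\omega_\bullet$, and $B(x,3r)\cap B^+_4,\,B(x,5r)\cap B^+_4\subset B^+_t$ (which forces $t\le3$, hence the need for $u\in C^1(\overline{B^+_3})$), so $\norm{Du}_{L^\infty(B(x,5r)\cap B^+_4)}\le g(t)$. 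Taking the supremum over $x\in B^+_s$ yields the displayed bound; one checks the hypotheses of Lemma~\ref{lem-02}, namely $x\in B^+_3$ and $\rho\le r\le\tfrac14$, hold since $s\le3$ and $r\le\tfrac15$ in the regime used below.

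Next I would close the estimate. Since $\hat\omega_{\mathbf A}$ is a Dini function, choose $r_1\in(0,\tfrac15]$, depending only on $n,\lambda,\Lambda,p$ and $\omega_{\mathbf A}$, so small that $C\int_0^{2r_1}\hat\omega_{\mathbf A}(\tau)\,\tau^{-1}\,d\tau\le\tfrac12$. Then for all $2\le s<t\le 2+5r_1$ one has $r=\tfrac15(t-s)\le r_1\le\tfrac15$ and the inequality of the first step becomes
\[
g(s)\le \frac{C\,\norm{Du}_{L^1(B^+_4)}}{(t-s)^n}+\tfrac12\,g(t)+C\int_0^1\hat\omega_{\vec g}(\tau)\,\frac{d\tau}{\tau}.
\]
As $g$ is bounded on $[2,2+5r_1]$, the standard iteration lemma (see e.g.\ \cite{GM2012}) applies with $\theta=\tfrac12$ and exponent $n$, giving $g(2)\le C(5r_1)^{-n}\norm{Du}_{L^1(B^+_4)}+C\int_0^1\hat\omega_{\vec g}(\tau)\,\tau^{-1}\,d\tau$; since $r_1$ depends only on the stated data, this is exactly \eqref{eq10.23m}.

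The main obstacle is the self-referential nature of Lemma~\ref{lem-02}: it controls $\phi(x,\rho)$ only in terms of the a priori unknown quantity $\norm{Du}_{L^\infty}$ over a slightly larger set, so a naive summation is circular. The device resolving this is the two-part mechanism above — first restricting to radii $r\le r_1$ small enough that the Dini tail of $\hat\omega_{\mathbf A}$ forces the coefficient of $g(t)$ down to $\tfrac12$, and then invoking the absorption lemma on the short radius-interval $[2,2+5r_1]$, whose geometric-series proof comfortably tolerates the $(t-s)^{-n}$ blow-up of the first term. A secondary point requiring care is the bookkeeping of the inclusions $B(x,3r)\cap B^+_4,\ B(x,5r)\cap B^+_4\subset B^+_t$ and the finiteness and boundedness of $g$ on $[2,3]$ — which is precisely what the a priori hypothesis $u\in C^1(\overline{B^+_3})$ supplies.
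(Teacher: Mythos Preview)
Your proof is correct and follows essentially the same approach as the paper: telescope $\vec q_{x,2^{-k}r}\to Du(x)$, sum the estimate of Lemma~\ref{lem-02} over dyadic scales using the doubling property of $\hat\omega_\bullet$, and then absorb the resulting $\norm{Du}_{L^\infty}$ term on the right by choosing $r$ small enough that the Dini tail $\int_0^{2r}\hat\omega_{\mathbf A}(t)\,t^{-1}\,dt$ is small. The only cosmetic difference is in the absorption step: you invoke the standard continuous iteration lemma from \cite{GM2012} on the interval $[2,2+5r_1]$, whereas the paper carries out an equivalent discrete version by hand, setting $r_k=3-2^{1-k}$, multiplying by $3^{-kn}$, and summing the telescoping series.
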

\begin{proof}
For $x \in B^+_3$ and $0<r \le \frac14$, let $\set{\vec q_{x,2^{-k}r}}_{k=0}^\infty$ be a sequence of vectors in $\bR^n$ as given in \eqref{eq0955w}.
Since we have
\[
\abs{\vec q_{x,r} - \vec q_{x, \frac12 r}}^p \le
\abs{Du(y)-\vec q_{x,r}}^p + \abs{Du(y) - \vec q_{x, \frac12 r}}^p,
\]
by taking average over $y \in B(x,\frac12 r) \cap B^{+}_4$ and then taking $p$th root, we obtain
\begin{equation}				\label{eq1426fr}
\abs{\vec q_{x,r} -\vec q_{x,\frac12 r}} \lesssim \phi(x,r) + \phi(x, \tfrac12 r).
\end{equation}
Then, by iterating, we get
\begin{equation}				\label{eq1408th}
\abs{\vec q_{x,2^{-k} r} - \vec q_{x, r}} \lesssim \sum_{j=0}^k \phi(x, 2^{-j} r).
\end{equation}
Note that \eqref{eq10.39tu} implies
\[
\lim_{k \to \infty} \phi(x,  2^{-k}r)=0,
\]
and thus, by the assumption that $u \in C^1(\bar B^{+}_3)$, we find
\[
\lim_{k \to \infty} \vec q_{x, 2^{-k}r}=Du(x).
\]
Therefore, by taking $k\to \infty$ in \eqref{eq1408th}, using \eqref{eq10.39tu} and Remark~\ref{rmk2.33}, we get
\[
\abs{Du(x)-\vec q_{x,r}} \lesssim r^{-n}\norm{Du}_{L^1(B(x ,3r)\cap B^+_4)}+
\norm{Du}_{L^\infty(B(x,5r)\cap B^+_4)} \int_0^r \frac{\hat \omega_{\mathbf{A}}(t)}t \,dt+\int_0^r \frac{\hat\omega_{\vec g}(t)}t \,dt.
\]
By averaging the obvious inequality
\[
\abs{\vec q_{x, r}}^p \le \abs{Du(y) -\vec q_{x, r}}^p + \abs{Du(y)}^p
\]
over $y \in B(x, r)\cap B^+_4$ and taking $p$th root, we get
\[
\abs{\vec q_{x, r}} \lesssim  \phi(x, r) +  \left(\fint_{B(x,r)\cap B^+_4} \abs{Du}^p\right)^{\frac1p}.
\]
Combining these together and using
\[
\phi(x,r) \lesssim r^{-n} \norm{Du}_{L^1(B(x,r)\cap B^+_4)},
\]
we obtain
\[
\abs{Du(x)}  \lesssim r^{-n} \norm{Du}_{L^1(B(x,3r)\cap B^+_4)} + \norm{Du}_{L^\infty(B(x,5r)\cap B^+_4)} \int_0^r \frac{\hat\omega_{\mathbf{A}}(t)}t \,dt+\int_0^r \frac{\hat\omega_{\vec g}(t)}t \,dt.
\]
Now, taking supremum for $x\in B(x_0, r)\cap B^+_4$, where $x_0 \in B^+_3$ and $r\le \frac14$, we have
\begin{multline*}
\norm{Du}_{L^\infty(B(x_0, r)\cap B^+_4)} \le
C r^{-n} \norm{Du}_{L^1(B(x_0, 4r)\cap B^+_4)}\\
+C \norm{Du}_{L^\infty(B(x_0, 6r)\cap B^+_4)} \int_0^r \frac{\hat\omega_{\mathbf{A}}(t)}t \,dt + C \int_0^r \frac{\hat\omega_{\vec g}(t)}t \,dt .
\end{multline*}
We fix $r_0 <\frac14$ such that for any $0<r\le r_0$,
\[
C \int_0^{r} \frac{\hat\omega_{\mathbf{A}}(t)}t \,dt \le \frac1{3^n}.
\]
Then, we have for any $x_0 \in B^+_3$ and $0<r\le r_0$ that
\[
\norm{Du}_{L^\infty(B(x_0, r)\cap B^+_4)} \le
3^{-n}\norm{Du}_{L^\infty(B(x_0,6r) \cap B^+_4)} + C r^{-n} \norm{Du}_{L^1(B(x_0,4r) \cap B^+_4)} + C \int_0^r \frac{\hat\omega_{\vec g}(t)}t \,dt.
\]
For $k=1,2,\ldots$, denote $r_k=3-2^{1-k}$.
Note that $r_{k+1}-r_k=2^{-k}$ for $k\ge 1$ and $r_1=2$.
For $x_0\in B^+_{r_k}$ and $r\le 2^{-k-3}$, we have $B(x_0, 6r) \cap B^+_4 \subset B^{+}_{r_{k+1}}$.
We take $k_0$ sufficiently large such that $2^{-k_0-3}\le r_0$.
It then follows that for any $k\ge k_0$,
\[
\norm{Du}_{L^\infty(B^+_{r_k})} \le C 2^{kn} \norm{Du}_{L^1(B^+_4)} + C \int_0^{1} \frac{\hat\omega_{\vec g}(t)}t \,dt+3^{-n} \norm{Du}_{L^\infty(B^+_{r_{k+1}})}.
\]
By multiplying the above by $3^{-kn}$ and then summing over $k \ge k_0$, we reach
\[
\sum_{k=k_0}^\infty 3^{-kn}\norm{Du}_{L^\infty(B^+_{r_k})} \le C \norm{Du}_{L^1(B^+_4)} + C \int_0^{1} \frac{\hat\omega_{\vec g}(t)}t \,dt
+\sum_{k=k_0}^\infty 3^{-(k+1)n} \norm{Du}_{L^\infty(B^+_{r_{k+1}})}.
\]
Since we assume that $u\in C^1(\overline B{}^+_3)$, the summations on both sides are convergent and we obtain \eqref{eq10.23m}.
\end{proof}

With $\tilde\omega_\bullet(t)$ and $\hat\omega_\bullet(t)$ as in Lemmas~\ref{lem-01} and \ref{lem-02}, we define
\begin{equation}				\label{eq10.46tu}
\omega^*_\bullet(t):= \hat\omega_\bullet(t)+ \int_0^t \frac{\tilde \omega_\bullet(s)}s \,ds +\tilde \omega_\bullet(4t)+ \int_0^t \frac{\tilde \omega_\bullet(4s)}s \,ds.
\end{equation}
\begin{lemma}\label{lem-04} Let $\beta \in (0,1)$.
For any $x \in B^+_3$ and $0< r \le \frac15$, we have
\begin{equation}					\label{eq11.44fr}
\abs{Du(x)-\vec q_{x,r}} \le Cr^{\beta}\,\norm{Du}_{L^1(B(x,\frac35) \cap B^+_4)} + C\norm{Du}_{L^\infty(B(x,1)\cap B^+_4)}\, \omega^*_{\mathbf{A}}(r) + C\omega_{\vec g}^*(r),
\end{equation}
where $C=C(n, \lambda, \Lambda,p, \beta)$ are constants
and $\omega^*(t)$ is defined as in \eqref{eq10.46tu}.
\end{lemma}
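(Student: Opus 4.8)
The plan is to run the same telescoping scheme as in the proof of Lemma~\ref{lem-03}, but now keeping the outer radius in Lemma~\ref{lem-02} \emph{fixed} and equal to $\tfrac15$ rather than letting it shrink with $r$. Keeping it fixed is exactly what converts the geometric factor $(\rho/r)^{\beta}$ in \eqref{eq10.39tu} into a genuine power $r^{\beta}$ in front of an $L^1$ norm over a \emph{fixed} ball, instead of into an $O(1)$ factor multiplying $r^{-n}\norm{Du}_{L^1(B(x,3r)\cap B^+_4)}$ as in the proof of Lemma~\ref{lem-03}.

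Fix $x\in B^+_3$ and $0<r\le\tfrac15$. By \eqref{eq1408th}, $\abs{\vec q_{x,2^{-k}r}-\vec q_{x,r}}\lesssim\sum_{j=0}^{k}\phi(x,2^{-j}r)$ for every $k$; since \eqref{eq10.39tu} forces $\phi(x,2^{-k}r)\to 0$ as $k\to\infty$, the a priori $C^1(\overline B{}^+_3)$ regularity assumed in the proof of Proposition~\ref{prop01} yields $\vec q_{x,2^{-k}r}\to Du(x)$, hence $\abs{Du(x)-\vec q_{x,r}}\lesssim\sum_{j=0}^{\infty}\phi(x,2^{-j}r)$. Now apply Lemma~\ref{lem-02} to each term with $\rho=2^{-j}r$ and outer radius $\tfrac15$ (legitimate because $2^{-j}r\le r\le\tfrac15\le\tfrac14$): the two balls in \eqref{eq10.39tu} become $B(x,\tfrac35)\cap B^+_4$ and $B(x,1)\cap B^+_4$, and, absorbing the constant powers of $5$ into $C$,
\[
\phi(x,2^{-j}r)\le C(2^{-j}r)^{\beta}\norm{Du}_{L^1(B(x,\frac35)\cap B^+_4)}+C\norm{Du}_{L^\infty(B(x,1)\cap B^+_4)}\,\hat\omega_{\mathbf A}(2^{-j}r)+C\hat\omega_{\vec g}(2^{-j}r).
\]
Summing over $j$, the geometric series gives $\sum_j(2^{-j}r)^{\beta}=r^{\beta}/(1-2^{-\beta})$, while the doubling property \eqref{eq10.31tu} of $\hat\omega_\bullet$ (Remark~\ref{rmk2.33}) gives $\sum_{j\ge0}\hat\omega_\bullet(2^{-j}r)\lesssim\int_0^r\tfrac{\hat\omega_\bullet(t)}{t}\,dt$ for $\bullet\in\set{\mathbf A,\vec g}$, so that
\[
\abs{Du(x)-\vec q_{x,r}}\lesssim r^{\beta}\norm{Du}_{L^1(B(x,\frac35)\cap B^+_4)}+\norm{Du}_{L^\infty(B(x,1)\cap B^+_4)}\int_0^r\frac{\hat\omega_{\mathbf A}(t)}{t}\,dt+\int_0^r\frac{\hat\omega_{\vec g}(t)}{t}\,dt.
\]

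It remains to bound $\int_0^r\tfrac{\hat\omega_\bullet(t)}{t}\,dt$ by $C\omega^*_\bullet(r)$. Writing $\hat\omega_\bullet(t)=\tilde\omega_\bullet(t)+\tilde\omega_\bullet(4t)+\omega^\sharp_\bullet(4t)$, the first two pieces integrate to the terms $\int_0^r\tfrac{\tilde\omega_\bullet(s)}{s}\,ds$ and $\int_0^r\tfrac{\tilde\omega_\bullet(4s)}{s}\,ds$ already appearing in \eqref{eq10.46tu}. For the third, substitute $u=4t$ and split the supremum in $\omega^\sharp_\bullet(u)=\sup_{\sigma\in[u,1]}(u/\sigma)^{\beta}\tilde\omega_\bullet(\sigma)$ at scale $\sigma=4r$: over $\sigma\in[4r,1]$ one has the bound $(u/4r)^{\beta}\omega^\sharp_\bullet(4r)$, whose $u$-integral over $(0,4r)$ equals $\omega^\sharp_\bullet(4r)/\beta\le\hat\omega_\bullet(r)/\beta$; over $\sigma\in[u,4r]$, the same dyadic decomposition plus doubling of $\tilde\omega_\bullet$ used to prove \eqref{eq15.39m}, followed by the change of variables $v=2^k u$, gives a bound $\lesssim\int_0^{4r}\tfrac{\tilde\omega_\bullet(v)}{v}\,dv=\int_0^r\tfrac{\tilde\omega_\bullet(4s)}{s}\,ds$. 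Adding the three contributions gives $\int_0^r\tfrac{\hat\omega_\bullet(t)}{t}\,dt\lesssim\omega^*_\bullet(r)$, and \eqref{eq11.44fr} follows.

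I expect the main obstacle to be this last step, the bookkeeping that absorbs the nested Dini moduli $\tilde\omega_\bullet$, $\omega^\sharp_\bullet$, $\hat\omega_\bullet$ into $\omega^*_\bullet$; it is routine but not entirely mechanical, the key being to split the supremum defining $\omega^\sharp_\bullet$ at the scale $4r$. The only other delicate point is the identification $\lim_k\vec q_{x,2^{-k}r}=Du(x)$, which is where the a priori $C^1$ hypothesis (to be removed afterwards by the approximation argument announced before Proposition~\ref{prop01}) is used.
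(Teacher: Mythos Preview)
Your proof is correct, and it takes a genuinely different (and arguably cleaner) route than the paper's.

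The paper does \emph{not} treat Lemma~\ref{lem-02} as a black box. Instead it telescopes at scale $\kappa$ and splits into two cases according to whether $r\le x^n$ or $r>x^n$. In the first case it sums the interior iteration \eqref{eq22.25fo} directly; in the second it passes through the auxiliary boundary quantity $\varphi(\bar x,\cdot)$ via \eqref{eq1448fr}--\eqref{eq20.09fr}, sums the boundary iteration \eqref{eq22.25f}, and closes with Lemma~\ref{lem-01}. Because the paper works with $\tilde\omega_\bullet$ throughout, the four summands in the definition \eqref{eq10.46tu} of $\omega^*_\bullet$ emerge one by one, and no further Dini bookkeeping is needed.

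Your approach bypasses the case split and the quantity $\varphi$ entirely: you invoke Lemma~\ref{lem-02} with the outer radius frozen at $\tfrac15$, sum, and obtain a bound in terms of $\int_0^r \hat\omega_\bullet(t)/t\,dt$. The price is the extra step of absorbing this integral into $\omega^*_\bullet(r)$, which you handle correctly by splitting the supremum defining $\omega^\sharp_\bullet$ at scale $4r$. What your route buys is structural simplicity; what the paper's route buys is that the modulus $\omega^*_\bullet$ appears without any post-processing, making its definition \eqref{eq10.46tu} look more inevitable.
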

\begin{proof}
As in the proof of Lemma~\ref{lem-02}, we denote $\bar x= (x^1, \ldots, x^{n-1}, 0)$
and $\kappa<\frac12$ be the same constant as  \eqref{eq22.25f} and \eqref{eq22.25fo}.
Let $\set{\vec q_{x,\kappa^i r}}_{i=0}^\infty \in \bR^n$ and $\set{\bar q_{\bar x, 2\kappa^i r}}_{i=0}^\infty \in \bR$ be sequences that are chosen accordingly as in \eqref{eq0955w} and \eqref{eq0955wp}.
By using
\[
\lim_{i \to \infty} \vec q_{x,\kappa^i r}=Du(x)
\]
and a computation similar to \eqref{eq1426fr}, we get
\[
\abs{Du(x) -\vec q_{x,r}} \le \sum_{i=0}^{\infty} \,\abs{\vec q_{x, \kappa^i r}-\vec q_{x,\kappa^{i+1}r}}
\lesssim \sum_{i=0}^\infty \phi(x, \kappa^i r).
\]
Therefore, it suffices to bound $\sum_{i=0}^\infty \phi(x, \kappa^i r)$ by the right-hand side of \eqref{eq11.44fr}.

In the case when $r\le x^n$, by \eqref{eq22.25fo} and \cite[Lemma~2.7]{DK16}
we have
\[
\sum_{i=0}^\infty \phi(x, \kappa^i r) \lesssim  \phi(x,r)+ \norm{Du}_{L^\infty(B(x,\frac15) \cap B^+_4)} \int_0^{r} \frac{\tilde \omega_{\mathbf{A}}(t)}t \,dt +\int_0^{r} \frac{\tilde \omega_{\vec g}(t)}t \,dt.
\]
By Lemma~\ref{lem-02}, we get
\[
\phi(x, r)  \lesssim r^{\beta}\,\norm{Du}_{L^1(B(x,\frac35) \cap B^+_4)} + \norm{Du}_{L^\infty(B(x,1)\cap B^+_4)}\, \hat\omega_{\mathbf{A}}(r) + \hat\omega_{\vec g}(r),
\]
By combining the above two inequalities, we obtain \eqref{eq11.44fr}.

In the case when $r>x^n$, let $i_0$ be the integer such that $\kappa^{i_0+1}r  \le x^n < \kappa^{i_0} r$.
By \eqref{eq22.25fo} and \cite[Lemma~2.7]{DK16}, we have
\[
\sum_{i=i_0+1}^\infty \phi(x, \kappa^i r) \lesssim \phi(x, \kappa^{i_0+1} r) + \norm{Du}_{L^\infty(B(x,r) \cap B^+_4)} \int_0^{r} \frac{\tilde \omega_{\mathbf{A}}(t)}t \,dt +\int_0^{r} \frac{\tilde \omega_{\vec g}(t)}t \,dt.
\]
By a computation similar to \eqref{eq20.09fr}, we have
\[
\phi(x, \kappa^{i_0+1} r) \le 2^{\frac{n-1}p} \kappa^{-\frac{n}{p}} \varphi(\bar x, 2 \kappa^{i_0} r)
\]
and by \eqref{eq1448fr}, for $0 \le i \le i_0$, we have
\[
\phi(x, \kappa^i r) \le 2^{\frac{n}p} \varphi(\bar x, 2\kappa^i r).
\]
Hence, we have
\[
\sum_{i=0}^\infty \phi(x, \kappa^i r) \lesssim \sum_{i=0}^{i_0} \varphi(\bar x, 2\kappa^i r) + \norm{Du}_{L^\infty(B(x,r) \cap B^+_4)} \int_0^{r} \frac{\tilde \omega_{\mathbf{A}}(t)}t \,dt +\int_0^{r} \frac{\tilde \omega_{\vec g}(t)}t \,dt.
\]
On the other hand, by \eqref{eq22.25f} and assumption $\abs{x-\bar x}=x^n <r \le \frac15$, we have
\[
\sum_{i=0}^\infty \varphi(\bar x, 2\kappa^i r) \lesssim \varphi(\bar x, 2r)+  \norm{Du}_{L^\infty(B(x, 1) \cap B^+_4)} \int_0^{r} \frac{\tilde \omega_{\mathbf{A}}(4t)}t \,dt +\int_0^r \frac{\tilde \omega_{\vec g}(4t)}t \,dt.
\]
By Lemma~\ref{lem-01} and using $\abs{x-\bar x} \le \frac15$, we find
\[
\varphi(\bar x, 2r) \lesssim r^{\beta}\, \norm{Du}_{L^1(B(x,\frac35)\cap B^{+}_4)}+ \norm{Du}_{L^\infty(B(x,1)\cap B^+_4)}\,\tilde \omega_{\mathbf{A}}(4r) + \tilde \omega_{\vec g}(4r).
\]
Combining these together, we get \eqref{eq11.44fr} as well.
\end{proof}

Now, we are ready to show that $u \in C^1(\bar B^+_1)$.
For $x, y \in B^+_1$, we have
\[
\abs{Du(x)-Du(y)} \le \abs{Du(x)- \vec q_{x,r}}\ + \abs{\vec q_{x,r}-\vec q_{y,r}}+\abs{Du(y)-\vec q_{y,r}}.
\]
In the case when $\abs{x-y} <\frac15$, set $r=\abs{x-y}$ and apply Lemma~\ref{lem-04} to get
\[
\abs{Du(x)- \vec q_{x,r}} +\abs{Du(y)-\vec q_{y,r}}
\lesssim r^{\beta}\,\norm{Du}_{L^1(B^+_2)} + \norm{Du}_{L^\infty(B^+_2)}\, \omega^*_{\mathbf{A}}(r) + \omega_{\vec g}^*(r).
\]
Take the average over $z \in B(x,r)\cap B(y,r) \cap B^+_4$ in the inequality
\[
\abs{\vec q_{x,r}-\vec q_{y,r}}^p \le \abs{Du(z)- \vec q_{x,r}}^p +\abs{Du(z)-\vec q_{y,r}}^p
\]
and take the $p$th root and apply Lemma~\ref{lem-02} to get
 \[
\abs{\vec q_{x,r}-\vec q_{y,r}} \lesssim \phi(x,r) + \phi(y,r) \lesssim r^\beta\, \norm{Du}_{L^1(B^+_2)} + \norm{Du}_{L^\infty(B^+_2)}\, \hat\omega_{\mathbf{A}}(r) + \hat\omega_{\vec g}(r).
\]
Combining these together and using Lemma~\ref{lem-03}, we obtain (note $\hat\omega_\bullet (t) \le \omega^*_\bullet(t)$\,)
\begin{multline}					\label{eq08.33st}
\abs{Du(x)-Du(y)} \lesssim  \norm{Du}_{L^1(B^+_2)}\,\abs{x-y}^\beta \\
+\left( \norm{Du}_{L^1(B^+_4)} + \int_0^1 \frac{\hat \omega_g(t)}t\,dt \right) \omega^*_{\mathbf{A}}(\abs{x-y})+ \omega^*_{\vec g}(\abs{x-y}).
\end{multline}
In case when $\abs{x-y} \ge \frac15$, we use $\abs{Du(x) - Du(y)} \le 2\norm{Du}_{L^\infty(B^+_1)}$, apply Lemma~\ref{lem-03}, and still obtain \eqref{eq08.33st}.
This completes the proof of Proposition~\ref{prop01} and that of Theorem~\ref{thm-main-d}.
\qed

\begin{remark}
We note that the modulus of continuity estimate \eqref{eq08.33st} is sharper than the corresponding interior estimate in \cite{DK16}.
In particular, if $\mathbf A$ and $\vec g$ are H\"older continuous with exponent $\alpha \in (0,1)$, then by taking $\beta \in (\alpha, 1)$ in \eqref{eq08.33st}, one can verify that $Du$ is H\"older continuous with the same exponent $\alpha$, recovering the classical Schauder estimates.
This fact was not clear in \cite{DK16}.
\end{remark}

\subsection{Proof of Theorem~\ref{thm-main-nd}}
The idea of proof is essentially the same as that of Theorem~\ref{thm-main-d}.
We first establish interior $C^2$ estimates.

\begin{proposition}				\label{prop2.45p}
For any $p\in (1,\infty)$, we have $u\in W^{2,p}(\Omega)$.
Moreover, for any $\Omega' \subset\subset \Omega$, we have $u \in C^2(\overline \Omega{}')$.
\end{proposition}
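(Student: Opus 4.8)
The plan is to first use the Calderón--Zygmund theory for non-divergence form equations with continuous leading coefficients to bootstrap the integrability of $D^2u$ up to every $L^p$, and then to verify that the right-hand side of the rewritten equation is of Dini mean oscillation, so that the interior $C^2$ estimate of \cite{DK16} applies on $\Omega'$.

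Since $\partial\Omega \in C^{2,Dini}$, the domain satisfies the measure density condition \eqref{cond_a}, so $\mathbf A$, $\vec b$, and $c$ --- being of Dini mean oscillation --- are uniformly continuous on $\overline\Omega$, in particular bounded; likewise $g \in L^\infty(\Omega)$. Rewrite the equation as $a^{ij}D_{ij}u = g - b^iD_iu - cu =: \tilde g$ with $u = 0$ on $\partial\Omega$. Starting from $u \in W^{2,2}(\Omega)$, Sobolev embedding gives $u, Du \in L^{p_1}(\Omega)$ with $\tfrac1{p_1} = \tfrac12 - \tfrac1n$ (any finite $p_1$ when $n \le 2$), so $\tilde g \in L^{p_1}(\Omega)$; the global $W^{2,p}$ estimate for $\mathcal L$ on the $C^{1,1}$ (indeed $C^{2,Dini}$) domain $\Omega$ with continuous leading coefficients (see e.g. \cite[Theorem~9.15]{GT}) then gives $u \in W^{2,p_1}(\Omega)$, with $\norm{u}_{W^{2,p_1}(\Omega)} \le C(\norm{g}_{L^\infty(\Omega)} + \norm{u}_{L^1(\Omega)})$ after absorbing the lower-order terms and invoking the a priori bound $\norm{u}_{W^{2,2}(\Omega)} \lesssim \norm{g}_{L^2(\Omega)} + \norm{u}_{L^1(\Omega)}$. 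Iterating raises the exponent by the Sobolev gain $\tfrac1n$ at each stage; after finitely many steps it exceeds $n$, hence $Du \in L^\infty(\Omega)$, $\tilde g \in L^\infty(\Omega)$, and therefore $u \in W^{2,p}(\Omega)$ for every $p \in (1,\infty)$, with $\norm{u}_{W^{2,p}(\Omega)} \le C(\norm{g}_{L^\infty(\Omega)} + \norm{u}_{L^1(\Omega)})$.

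With $p > n$, Morrey's inequality gives $u \in C^{1,\alpha}(\overline\Omega)$, $\alpha = 1 - \tfrac np$, so $D_iu$ and $u$ are Hölder --- hence uniformly Dini --- continuous on $\overline\Omega$. Since $\vec b$ and $c$ are of Dini mean oscillation, Lemma~\ref{lem00} shows $b^iD_iu$ and $cu$ are of Dini mean oscillation; and because the Dini mean oscillation class is closed under sums ($\omega_{f+h} \le \omega_f + \omega_h$, and a sum of Dini functions is Dini) and $g$ is of Dini mean oscillation by hypothesis, $\tilde g$ is of Dini mean oscillation, with $\omega_{\tilde g}$ controlled by the given data and $\norm{u}_{L^1(\Omega)}$.

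Finally, $u$ is a strong solution of $a^{ij}D_{ij}u = \tilde g$ in $\Omega$ with $\mathbf A$ and $\tilde g$ of Dini mean oscillation, so the interior $C^2$ estimate of \cite{DK16} yields $u \in C^2(\overline{\Omega}{}')$ for every $\Omega' \subset\subset \Omega$, with $\norm{u}_{C^2(\overline{\Omega}{}')}$ bounded in terms of the given data, $\norm{u}_{L^1(\Omega)}$, and $\dist(\Omega', \partial\Omega)$. The step requiring the most care is this closure argument: one must first extract enough Sobolev regularity to make $Du$ genuinely \emph{Hölder} continuous (not merely continuous), since it is precisely this that lets Lemma~\ref{lem00} upgrade ``$\mathbf A, \vec b, c, g$ of Dini mean oscillation'' to ``$\tilde g$ of Dini mean oscillation'', rather than the weaker VMO-type conclusion one would get from continuity alone.
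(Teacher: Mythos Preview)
Your proof is correct and follows essentially the same route as the paper: obtain $u\in W^{2,p}(\Omega)$ for all $p$, deduce $u\in C^{1,\mu}$ by Morrey, rewrite the equation as $a^{ij}D_{ij}u=g-b^iD_iu-cu$, invoke Lemma~\ref{lem00} to see the new right-hand side is of Dini mean oscillation, and finish with \cite[Theorem~1.6]{DK16}. The only cosmetic difference is that the paper reaches $W^{2,p}$ in one stroke by citing \cite[Theorem~11.2.3]{Kr08}, whereas you spell out the Sobolev bootstrap via \cite[Theorem~9.15]{GT}; mathematically these are the same argument.
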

\begin{proof}
By the $W^{2,p}$ theory, we have $u \in W^{2, p}(\Omega)$ for any $1<p<\infty$ and
\[
\norm{u}_{W^{2,p}(\Omega)} \le C \norm{g}_{L^\infty(\Omega)} + C \norm{u}_{L^1(\Omega)},
\]
where $C$ is a constant depending only on $n$, $\lambda$, $\Lambda$, $p$, $\Omega$, $\partial\Omega$, and the coefficients of $\cL$; see, for instance, \cite[Theorem 11.2.3]{Kr08}.
Therefore, by the Morrey-Sobolev embedding, $u \in C^{1,\mu}(\Omega)$ for any $0<\mu<1$  and
\[
\norm{u}_{C^{1,\mu}(\Omega)} \le C\norm{g}_{L^\infty(\Omega)}+C\norm{u}_{L^1(\Omega)}.
\]
In particular, we have
\[
\varrho_{Du}(t)+ \varrho_u(t) \le C \left( \norm{g}_{L^\infty(\Omega)} + \norm{u}_{L^1(\Omega)}\right) t^{\mu}.
\]
We rewrite the equation as
\[
a^{ij} D_{ij} u= g-b^i D_i u-cu =: g'.
\]
Then $g'$ is of Dini mean oscillation by Lemma~\ref{lem00}.
Moreover, by \eqref{eq11.39w}, we have
\[
\omega_{g'}(t) \le \omega_g(t) + C\left( \norm{g}_{L^\infty(\Omega)} + \norm{u}_{L^1(\Omega)}\right) \left\{\omega_b(t)+ \omega_c(t)+ \left(\norm{\vec b}_{L^\infty(\Omega)} + \norm{c}_{L^\infty(\Omega)}\right)t^\mu \right\}.
\]
Therefore, $\omega_{g'}$ is a Dini function that is completely determined by the given data (namely $n$, $\lambda$, $\Lambda$, $\Omega$, $\omega_{\mathbf{A}}$, $p$,  $\omega_b$, $\norm{b}_{L^\infty(\Omega)}$, $\omega_c$, $\omega_g$, and $\norm{g}_{L^\infty(\Omega)}$) and $\norm{u}_{L^1(\Omega)}$.
By \cite[Theorem~1.6]{DK16}, we thus find that $u \in C^2(\overline \Omega{}')$ and $\norm{u}_{C^2(\overline \Omega{}')}$ is bounded by a constant $C$ depending only on the above mentioned given data, $\norm{u}_{L^1(\Omega)}$, and $\Omega'$.
\end{proof}

Next, we turn to $C^2$ estimate near the boundary.
Let $g'$ be as given in the proof of Proposition~\ref{prop2.45p}.
Under a mapping of flattening boundary
\[
y=\vec \Phi(x)=(\Phi^1(x),\ldots, \Phi^n(x)),	
\]
let $\tilde u(y)=u(x)$, which satisfies
\[
\tilde a^{ij} D_{ij} \tilde u=\tilde g'-\tilde b^iD_i\tilde u=:\tilde h,
\]
where
\[
\tilde a^{ij}(y)= D_l\Phi^i D_k\Phi^j a^{kl}(x),\quad  \tilde b^i(y)= D_{kl}\Phi^i a^{kl}(x),\quad \tilde g'(y)=g'(x).
\]
By Lemmas~\ref{lem00}, we see that the coefficients $\tilde a^{ij}$ and the data $\tilde h$ are of Dini mean oscillation.
As before, we are thus reduced to prove the following.
\begin{proposition}					\label{prop01nd}
If $u \in W^{2,2}(B^+_4)$ is a strong solution of
\[
a^{ij} D_{ij} u= g\;\mbox{ in }\;B^+_4
\]
satisfying $u=0$ on $T(0,4)$, then $u \in C^2(\overline B{}^+_1)$.
\end{proposition}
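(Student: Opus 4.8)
The plan is to reproduce, with $D^2u$ in place of $Du$, the Campanato-type scheme used for Proposition~\ref{prop01}. As there, one first derives an a priori bound for the modulus of continuity of $D^2u$ under the extra hypothesis $u\in C^2(\overline B{}^+_3)$, and then removes it by a routine approximation. Fix $p\in(0,1)$. For $x\in B^+_4$ and $r>0$ put
\[
\phi(x,r):=\inf_{\vec q}\Bigl(\fint_{B(x,r)\cap B^+_4}\abs{D^2u-\vec q}^p\Bigr)^{1/p},
\]
the infimum over symmetric matrices $\vec q$, with a near-minimizer $\vec q_{x,r}$. The role played in the divergence case by the vectors $q\vec e_n$ is now played by the symmetric matrices whose tangential--tangential block vanishes — precisely the admissible shapes of the Hessian of a function that vanishes on the flat piece $T$. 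Thus for $\bar x\in T(0,4)$ and $r>0$ we set
\[
\varphi(\bar x,r):=\inf_{\mathsf N}\Bigl(\fint_{B^+(\bar x,r)\cap B^+_4}\abs{D^2u-\mathsf N}^p\Bigr)^{1/p},\qquad \mathsf N\ \text{symmetric},\ \ \mathsf N_{ij}=0\ \ (1\le i,j\le n-1),
\]
with a near-minimizer $\mathsf N_{\bar x,r}$. Under the a priori hypothesis, $u=0$ on $T$ forces $D_{ij}u=0$ on $T$ for $i,j\le n-1$, and the equation then determines $D_{nn}u$, hence the whole Hessian, on $T$; this is what singles out the family $\mathsf N$.

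The key estimate is the boundary analog of Lemma~\ref{lem-01}, proved by the same freezing and decomposition. On $\cD(\bar x,2r)$ write $u=v+w+Q$, where $\bar{\mathbf A}=\bar{\mathbf A}_{B^+(\bar x,2r)}$, $\bar g=\bar g_{B^+(\bar x,2r)}$, the function $w\in W^{2,2}(\cD(\bar x,2r))\cap W^{1,2}_0(\cD(\bar x,2r))$ solves $\bar a^{ij}D_{ij}w=(\bar a^{ij}-a^{ij})D_{ij}u+(g-\bar g)$ with zero boundary data, $Q:=\tfrac{\bar g}{2\bar a^{nn}}(x^n)^2$, and $v:=u-w-Q$ solves $\bar a^{ij}D_{ij}v=0$ in $\cD(\bar x,2r)$ with $v=0$ on the flat part. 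Lemma~\ref{lem-weak11-nd}, rescaled, together with $\abs{D_{ij}u}\le\norm{D^2u}_{L^\infty}$ and the Dini mean oscillation of $\mathbf A$ and $g$, gives $\bigl(\fint_{B^+(\bar x,r)}\abs{D^2w}^p\bigr)^{1/p}\lesssim\omega_{\mathbf A}(2r)\norm{D^2u}_{L^\infty(B^+(\bar x,2r))}+\omega_g(2r)$, as in \eqref{eq15.50a}. For $v$: since $v=0$ on the flat part, each $D_{ij}v$ with $i,j\le n-1$ vanishes there and still solves the constant-coefficient equation, $D_{nn}v$ is read off from the equation, and $\partial_n^3 v$ from $\partial_n$ of that identity; subtracting from $v$ a quadratic with Hessian in the admissible family (so that the difference vanishes on $T$), differentiating the equation tangentially, and combining the classical boundary $L^\infty$--$L^p$ estimate for second derivatives with a one-dimensional Poincar\'e inequality in $x^n$ yields $\norm{D^3v}_{L^\infty(B^+(\bar x,r/2))}\lesssim r^{-1}\bigl(\fint_{B^+(\bar x,r)}\abs{D^2v-\mathsf N}^p\bigr)^{1/p}$ for every admissible $\mathsf N$. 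A Taylor expansion of $D^2v$ at $\bar x$ — where the tangential--tangential block of $D^2v$ vanishes and the remaining entries are matched by a member of the family — gives
\[
\varphi(\bar x,\kappa r)\le C_0\kappa\,\varphi(\bar x,r)+C(\kappa^{-n/p}+1)\bigl(\omega_{\mathbf A}(2r)\norm{D^2u}_{L^\infty(B^+(\bar x,2r))}+\omega_g(2r)\bigr),
\]
and choosing $\kappa$ small in terms of $\beta\in(0,1)$ and iterating exactly as in Lemma~\ref{lem-01} yields the boundary decay with Dini functions $\tilde\omega_{\mathbf A},\tilde\omega_g$.

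The remaining steps are transcriptions of Lemmas~\ref{lem-02}--\ref{lem-04} and the concluding argument, with $Du$ replaced by $D^2u$: comparing $\phi(x,\rho)$ with $\varphi(\bar x,\cdot)$ in the three regimes $\rho\le r\le x^n$ (here using the interior $C^2$ estimate of \cite[Theorem~1.6]{DK16}), $x^n\le\rho\le r$, and $\rho\le x^n\le r$, one obtains a Dini function $\hat\omega_\bullet$; the dyadic iteration of Lemma~\ref{lem-03} gives $\norm{D^2u}_{L^\infty(B^+_2)}\lesssim\norm{D^2u}_{L^1(B^+_4)}+\int_0^1\hat\omega_g(t)/t\,dt$; the argument of Lemma~\ref{lem-04} gives the pointwise bound $\abs{D^2u(x)-\vec q_{x,r}}\lesssim r^\beta\norm{D^2u}_{L^1(B(x,\frac35)\cap B^+_4)}+\norm{D^2u}_{L^\infty(B(x,1)\cap B^+_4)}\,\omega^*_{\mathbf A}(r)+\omega^*_g(r)$ with $\omega^*_\bullet$ as in \eqref{eq10.46tu}; and finally, for $x,y\in B^+_1$ with $r=\abs{x-y}$, bounding $\abs{D^2u(x)-D^2u(y)}$ by $\abs{D^2u(x)-\vec q_{x,r}}+\abs{\vec q_{x,r}-\vec q_{y,r}}+\abs{D^2u(y)-\vec q_{y,r}}$ produces a modulus of continuity for $D^2u$ on $\overline B{}^+_1$, completing the proof.

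I expect the main obstacle to be the boundary decay step, i.e.\ the non-divergence analog of Lemma~\ref{lem-01}. In the first-order case the frozen solution has gradient close to a multiple of $\vec e_n$ simply because its tangential derivatives vanish on $T$; here only the tangential--tangential block of $D^2v$ vanishes on $T$, so one must additionally exploit the frozen equation (which pins $D_{nn}v$ to the mixed derivatives on $T$) and the third-order boundary estimate to conclude that $D^2v$ is close, throughout $B^+(\bar x,\kappa r)$, to an admissible matrix $\mathsf N$. Establishing $\norm{D^3v}_{L^\infty(B^+(\bar x,r/2))}\lesssim r^{-1}\bigl(\fint_{B^+(\bar x,r)}\abs{D^2v-\mathsf N}^p\bigr)^{1/p}$ with the correct affine family — and hence the self-improving decay of $\varphi$ — is the technical heart; the other steps follow the blueprint of Proposition~\ref{prop01} with only cosmetic changes.
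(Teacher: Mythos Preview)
Your proposal is correct and follows essentially the same approach as the paper. The paper's proof of Proposition~\ref{prop01nd} proceeds exactly by establishing the non-divergence analog of Lemma~\ref{lem-01} (called Lemma~\ref{lem-01nd}) and then transcribing Lemmas~\ref{lem-02}--\ref{lem-04} with $D^2u$ in place of $Du$; your set of admissible matrices $\mathsf N$ is precisely the paper's $\mathbb S_0(n)$.

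Two minor differences worth noting. First, the paper does not introduce the auxiliary quadratic $Q=\tfrac{\bar g}{2\bar a^{nn}}(x^n)^2$: it simply decomposes $u=v+w$ with $\bar a^{ij}D_{ij}v=\bar g$, and since $\bar g$ is constant the tangential derivatives $D_kv$ ($k\le n-1$) already satisfy the homogeneous constant-coefficient equation with zero data on $T$, to which the estimate \eqref{eq9.33th} from the divergence case applies directly. Your subtraction of $Q$ is harmless but unnecessary. Second, and more substantively, the paper obtains the third-order bound not via a ``one-dimensional Poincar\'e inequality in $x^n$'' but by reusing \eqref{eq9.33th} applied to each $D_kv$, which gives $\norm{D^2D_kv}_{L^\infty}\lesssim r^{-1}\bigl(\fint\abs{D_{x'}^2v}^p\bigr)^{1/p}$, and then recovering $D_{nnn}v$ from the equation; since $\abs{D_{x'}^2v}\le\abs{D^2v-\mathsf N}$ for every $\mathsf N\in\mathbb S_0(n)$, the desired bound follows immediately. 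This is cleaner than the route you sketch and avoids any additional ingredient.
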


The rest of this subsection is devoted to the proof of Proposition~\ref{prop01nd}.
As in the proof of Proposition~\ref{prop01}, we shall derive an a priori estimate of the modulus of continuity of $D^2u$ by assuming that $u$ is in $C^2(\overline B{}^+_3)$.

Let $\mathbb S(n)$ be the set of all symmetric $n\times n$ matrices and let
\[
\mathbb S_0(n)=\set{ \mathbf q= (q^{ij}) \in \mathbb S(n) :  q^{ij}=0\;\text{ for }\; i,j =1, 2, \ldots, n-1}.
\]
Fix any $p \in (0,1)$.
Similar to \eqref{eq12.14fo} and \eqref{eq0955w}, for $x\in B^+_4$ and $r>0$, we define
\[
\phi(x,r):=\inf_{\mathbf q \in \mathbb{S}(n)} \left( \fint_{B(x,r) \cap B^{+}_4} \,\abs{D^2u - \mathbf q}^p \right)^{\frac1p}
\]
and fix a matrix $\mathbf q_{x,r}\in \mathbb S(n)$ satisfying
\[
\phi(x,r) = \left( \fint_{B(x,r) \cap B^{+}_4} \,\abs{D^2u - \mathbf q_{x,r}}^p \right)^{\frac1p}.
\]
Also, similar to \eqref{eq12.14fop}, for $\bar x \in T(0,4)$ and $r>0$, we introduce an auxiliary quantity
\[
\varphi(\bar x, r):=\inf_{\mathbf q \in \mathbb S_0(n)}\left( \fint_{B^{+}(\bar x,r)} \abs{D^2u -\mathbf q}^p \right)^{\frac1p}.
\]

The following lemma is in parallel with Lemma~\ref{lem-01}.

\begin{lemma}						\label{lem-01nd}
Let $\beta \in (0,1)$.
For any $\bar x \in T(0,3)$ and $0<\rho \le r \le \frac12$, we have
\[
\varphi(\bar x, \rho) \le C\left(\frac{\rho}{r}\right)^{\beta}\,r^{-n} \norm{D^2u}_{L^1(B^{+}(\bar x, r))}+C \norm{D^2u}_{L^\infty(B^{+}(\bar x,2r))}\,\tilde\omega_{\mathbf{A}}(2\rho) + C \tilde\omega_g(2\rho),
\]
where $C=C(n, \lambda, \Lambda, p, \beta)$ are constants and  $\tilde\omega_\bullet(t)$ is a Dini function derived from $\omega_\bullet(t)$.
\end{lemma}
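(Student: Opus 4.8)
The plan is to transcribe the proof of Lemma~\ref{lem-01} almost verbatim, replacing $Du$ by $D^2u$, the divergence operator by the non-divergence operator $\bar a^{ij}D_{ij}$, Lemma~\ref{lem02-weak11} by Lemma~\ref{lem-weak11-nd}, and the approximating vector $q\vec e_n$ by an approximating matrix $\mathbf q\in\mathbb S_0(n)$. As in \eqref{eq12.14f}, the trivial choice $\mathbf q=0$ gives $\varphi(\bar x,r)\lesssim r^{-n}\norm{D^2u}_{L^1(B^{+}(\bar x,r))}$, which will account for the first term on the right once the iteration is set up. Next decompose $u=v+w$ on $\cD(\bar x,2r)$, where $w\in W^{2,2}(\cD(\bar x,2r))\cap W^{1,2}_0(\cD(\bar x,2r))$ solves $\bar a^{ij}D_{ij}w=-(a^{ij}-\bar a^{ij})D_{ij}u+(g-\bar g)$ with $w=0$ on $\partial\cD(\bar x,2r)$, using the abbreviations $\bar{\mathbf A}=\bar{\mathbf A}_{B^{+}(\bar x,2r)}$ and $\bar g=\bar g_{B^{+}(\bar x,2r)}$. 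Lemma~\ref{lem-weak11-nd} applied with scaling, together with the elementary upgrade from a weak type-$(1,1)$ bound to an $L^p$ bound for $p<1$ (cf. \cite[(2.11)]{DK16}), yields
\[
\Bigl(\fint_{B^{+}(\bar x,r)}\abs{D^2w}^p\Bigr)^{1/p}\lesssim \omega_{\mathbf A}(2r)\,\norm{D^2u}_{L^\infty(B^{+}(\bar x,2r))}+\omega_g(2r).
\]

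The core is the constant-coefficient boundary estimate for $v=u-w$, which solves $\bar a^{ij}D_{ij}v=0$ in $B^{+}(\bar x,r)$ with $v=0$ on $T(\bar x,r)$. Since $\bar{\mathbf A}$ is constant, each pure tangential second derivative $D_{kl}v$ with $1\le k,l\le n-1$ again solves the homogeneous equation and vanishes on $T(\bar x,r)$. Combining this with the identity $\bar a^{nn}D_{nn}v=-\sum_{(i,j)\neq(n,n)}\bar a^{ij}D_{ij}v$ obtained from the equation (and with the identities obtained by differentiating it in $x^n$), an index bookkeeping shows that every component of $D^3v$ can be written as a first derivative of some $D_{kl}v$ with $k,l<n$; hence, by classical boundary derivative estimates and the fact that $q^{kl}=0$ for $k,l<n$ when $\mathbf q\in\mathbb S_0(n)$,
\[
\norm{D^3v}_{L^\infty(B^{+}(\bar x,r/2))}\le Cr^{-1}\Bigl(\fint_{B^{+}(\bar x,r)}\abs{D^2v-\mathbf q}^p\Bigr)^{1/p},\qquad \forall\,\mathbf q\in\mathbb S_0(n).
\]
(Alternatively one may first reduce to $\bar{\mathbf A}=\mathbf I$ by a linear change of variables preserving the half-space, as in Lemma~\ref{lem-bdry-Lip}; such a map preserves the class $\mathbb S_0(n)$, and then odd reflection makes the estimate transparent.) Now choose $\mathbf q_{\kappa r}\in\mathbb S_0(n)$ whose last row and column are the averages $\overline{D_{in}v}_{B^{+}(\bar x,\kappa r)}$. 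Using that $D_{kl}v$ vanishes on $T(\bar x,r)\ni\bar x$ for $k,l<n$, while the remaining entries of $D^2v$ have oscillation over $B^{+}(\bar x,\kappa r)$ bounded by $\kappa r\,\norm{D^3v}_{L^\infty}$, we get for any $0<\kappa<\tfrac12$
\[
\Bigl(\fint_{B^{+}(\bar x,\kappa r)}\abs{D^2v-\mathbf q_{\kappa r}}^p\Bigr)^{1/p}\le C_0\,\kappa\Bigl(\fint_{B^{+}(\bar x,r)}\abs{D^2v-\mathbf q}^p\Bigr)^{1/p},\qquad\forall\,\mathbf q\in\mathbb S_0(n),
\]
with $C_0=C_0(n,\lambda,\Lambda,p)$.

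Feeding in the bound on $D^2w$ through $u=v+w$ and the $L^p$ quasi-triangle inequality (exactly the passage leading to \eqref{eq15.50b}), and then taking the infimum over $\mathbf q$, we obtain
\[
\varphi(\bar x,\kappa r)\le 4^{(1-p)/p}C_0\,\kappa\,\varphi(\bar x,r)+C(\kappa^{-n/p}+1)\bigl(\omega_{\mathbf A}(2r)\norm{D^2u}_{L^\infty(B^{+}(\bar x,2r))}+\omega_g(2r)\bigr).
\]
Fix $\kappa$ so small that $4^{(1-p)/p}C_0\le\kappa^{\beta-1}$, iterate over dyadic-type scales $\kappa^j r$, and use \cite[Lemma~1]{Dong2012} to absorb the Dini tails into the functions $\tilde\omega_{\mathbf A}$, $\tilde\omega_g$ defined in \eqref{eq14.27w}; this gives the asserted inequality exactly as in \eqref{eq22.25f}--\eqref{eq0947f}. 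The only genuinely new ingredient, and the step needing care, is the index bookkeeping behind the $D^3v$ estimate with the restricted class $\mathbb S_0(n)$ (equivalently, that only the last row and column of $D^2v$ survive on the flat boundary); everything else is a direct transcription of the divergence-form argument.
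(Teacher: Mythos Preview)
Your proposal is correct and takes essentially the same approach as the paper; the only organizational difference is that the paper obtains the $D^3v$ bound by applying the already-proved estimate \eqref{eq9.33th} to the tangential first derivatives $D_k v$ (for $k<n$), rather than starting from the pure tangential second derivatives $D_{kl}v$ as you do. One harmless slip: $v=u-w$ actually satisfies $\bar a^{ij}D_{ij}v=\bar g$ (a nonzero constant), not $0$, but since only derivatives of this identity enter the $D^3v$ estimate, nothing in your argument changes.
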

\begin{proof}
For $\bar x \in T(0,3)$ and $0<r \le \frac12$, we decompose $u=v+w$, where $w \in W^{2,2}(\cD(\bar x, 2r)) \cap W^{1,2}_0(\cD(\bar x, 2r))$ is a unique solution of the problem
\[
\bar a^{ij} D_{ij} w = -\tr ((\mathbf{A}- \bar{\mathbf{A}}) D^2 u) + g-\bar g \;\mbox{ in }\; \cD(\bar x, 2r);\quad
w=0 \;\mbox{ on }\;\partial \cD(\bar x, 2r).
\]
By Lemma~\ref{lem-weak11-nd} with scaling, we have for any $\alpha>0$,
\[
\abs{\set{x\in B^{+}(\bar x, r): \abs{D^2w(x)} > \alpha}} \lesssim \frac{1}{\alpha}\left(\,\norm{D^2 u}_{L^\infty(B^{+}(\bar x, 2r))} \int_{B^{+}(\bar x, 2r)} \abs{\mathbf{A}-\bar{\mathbf{A}}} +  \int_{B^{+}(\bar x, 2r)}  \abs{g -\bar g}\,\right).
\]
Therefore, we have
\[
\left(\fint_{B^{+}(\bar x, r)} \abs{D^2w}^p \right)^{\frac1p} \lesssim \omega_{\mathbf{A}}(2r) \,\norm{D^2 u}_{L^\infty(B^{+}(\bar x, 2r))} +  \omega_g(2r).
\]
Since $v=u-w$ satisfies
\[
\bar a^{ij} D_{ij} v= D_i (\bar a^{ij} D_j v)= \bar g\;\mbox{ in }\;B^{+}(\bar x, r);\quad v=0\;\mbox{ on }\;T(\bar x, r),
\]
we see that $D_k v$ satisfies \eqref{eq08.48th} for $k=1, \ldots, n-1$.
Therefore, by \eqref{eq9.33th}, we have
\[
\norm{D^2D_k v}_{L^\infty(B^{+}(\bar x, \frac12 r))}\lesssim r^{-1} \left(\fint_{B^{+}(\bar x, r)} \abs{D_{x'}^2 v}^p\,\right)^{\frac1p},
\]
where $\abs{D_{x'}^2 v}^2 = \sum_{k, l=1}^{n-1} (D_{kl} v)^2$.
Since $\bar a^{ij} D_{ij} (D_n v)= 0$, we find
\[
D_{nnn} v=D_{nn} D_nv = -\frac{1}{\bar a^{nn}} 
\sum_{(i,j)\neq (n,n)}\bar a^{ij} D_{ijn}v,
\]
and thus, we obtain
\[
\norm{D^3 v}_{L^\infty(B^{+}(\bar x, \frac12 r))}\lesssim r^{-1} \left(\fint_{B^{+}(\bar x, r)} \abs{D_{x'}^2 v}^p\,\right)^{\frac1p}.
\]
Therefore, similar to \eqref{int-reg}, we have
\[
\norm{D^3 v}_{L^\infty(B^{+}(\bar x, \frac12 r))}\lesssim r^{-1} \left(\fint_{B^{+}(\bar x,r)} \abs{D^2v -\mathbf q}^p\,\right)^{\frac1p},\quad \forall \mathbf q \in \mathbb S_0(n).
\]
Note that for $i, j=1, \ldots, n-1$ and $0<\kappa < \frac12$, we have
\[
\left(\fint_{B^{+}(\bar x,\kappa r)} \abs{D_{ij} v}^p \right)^{\frac1p} = \left(\fint_{B^{+}(\bar x,\kappa r)} \abs{D_{ij} v - D_{ij}v(\bar x)}^p \right)^{\frac1p} \le 2\kappa r \norm{D^3 v}_{L^\infty(B(\bar x, \frac12 r))}.
\]
Therefore, if we take $\bar {\mathbf q}_{\bar x, \kappa r} \in \mathbb S_0(n)$ whose $(i,n)$ entry is $\overline{D_{in}v}_{B^+(\bar x, \kappa r)}$ for  $i=1,\ldots, n$, then similar to \eqref{eq15.50b}, we have
\[
\left(\fint_{B^{+}(\bar x,\kappa r)} \abs{D^2v - \bar {\mathbf q}_{\bar x, \kappa r}}^p \right)^{\frac1p}
\le C_0 \kappa \left(\fint_{B^{+}(\bar x, r)} \abs{D^2v -\mathbf q}^p\, \right)^{\frac1p}, \quad \forall \mathbf q \in \mathbb S_0(n).
\]
By the same argument that led to \eqref{eq22.25f}, we find that there is $\kappa \in (0, \frac12)$ such that
\[
\varphi(\bar x, \kappa^j r) \le \kappa^{j \beta} \varphi(\bar x, r) +C \norm{D^2 u}_{L^\infty(B^{+}(\bar x, 2r))}\,\tilde \omega_{\mathbf{A}}(2\kappa^{j} r) + C \tilde \omega_g(2\kappa^{j} r),
\]
where $\tilde\omega_\bullet(t)$ is the same as in \eqref{eq14.27w}.
The rest of proof is the same as that of Lemma~\ref{lem-01}.
\end{proof}

By modifying the proof of Lemmas \ref{lem-02}, \ref{lem-03}, and \ref{lem-04} in a straightforward way, we obtain the following lemmas.

\begin{lemma}						
Let $\beta \in (0,1)$.
For any $x \in B^+_3$ and $0<\rho\le r \le \frac14$, we have
\[
\phi(x, \rho) \le  C\left(\frac{\rho}{r}\right)^{\beta}\, r^{-n} \norm{D^2u}_{L^1(B(x ,3r)\cap B^+_4)}+C\norm{D^2u}_{L^\infty(B(x,5r)\cap B^+_4)}\, \hat\omega_{\mathbf{A}}(\rho) + C \hat\omega_g(\rho),
\]
where $C=C(n, \lambda, \Lambda,p, \beta)$ are constants and $\hat\omega_\bullet(t)$ is a Dini function derived from $\omega_\bullet(t)$.
\end{lemma}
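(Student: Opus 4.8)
The plan is to mimic the proof of Lemma~\ref{lem-02} line by line, with $Du$ replaced by $D^2u$, the parameter space $\bR^n$ replaced by $\mathbb{S}(n)$, the one-dimensional subspace $\bR\vec e_n$ replaced by $\mathbb{S}_0(n)$, Lemma~\ref{lem-01nd} used in place of Lemma~\ref{lem-01}, and the interior $C^2$ estimate from the proof of \cite[Theorem~1.6]{DK16} used in place of the interior $C^1$ estimate from \cite[Theorem~1.5]{DK16}. As there, I would write $\bar x=(x^1,\dots,x^{n-1},0)$ and split into the three regimes (i) $\rho\le r\le x^n$, (ii) $x^n\le\rho\le r$, and (iii) $\rho\le x^n\le r$, according to how $x^n=\dist(x,T(0,4))$ compares with $\rho$ and $r$; in each regime I expect to recover, up to harmless constants and shifts of the argument, exactly the bounds \eqref{eq0947fo}, \eqref{eq0919tu}, and \eqref{eq14.24m} of the divergence case.

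In regime (i) the ball $B(x,r)$ stays inside $B^+_4$, so $\phi(x,\cdot)$ is the purely interior quantity of \cite{DK16}, and I would iterate its geometric-decay inequality $\phi(x,\kappa r)\le 2^{\frac{2}{p}-1}C_0\kappa\,\phi(x,r)+C(\kappa^{-n/p}+1)(\omega_{\mathbf A}(r)\norm{D^2u}_{L^\infty(B(x,r))}+\omega_g(r))$ after choosing $\kappa$, then pass from dyadic to arbitrary scales as in \eqref{eq0946fo}--\eqref{eq0947fo}. In regime (ii) the inclusions $B(x,\rho)\cap B^+_4\subset B^+(\bar x,2\rho)\subset B^+_4$ together with $\mathbb{S}_0(n)\subset\mathbb{S}(n)$ give $\phi(x,\rho)\le 2^{n/p}\varphi(\bar x,2\rho)$, which Lemma~\ref{lem-01nd} controls after noting $\abs{x-\bar x}=x^n\le r$. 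In regime (iii) I would set $R=x^n$, let $j$ satisfy $\kappa^{j+1}<\rho/R\le\kappa^j$, iterate the interior inequality from scale $R$ down to scale $\rho$, then bound $\phi(x,\kappa^{-j}\rho)\lesssim\varphi(\bar x,2R)$ as in \eqref{eq20.09fr} and feed this into Lemma~\ref{lem-01nd} at scale $2R$; the sup over intermediate scales forces the auxiliary modulus $\omega^\sharp_\bullet(t):=\sup_{s\in[t,1]}(t/s)^\beta\tilde\omega_\bullet(s)$ to appear. Collecting the three bounds and setting $\hat\omega_\bullet(t):=\tilde\omega_\bullet(t)+\tilde\omega_\bullet(4t)+\omega^\sharp_\bullet(4t)$ gives the claimed estimate, and the Dini property of $\hat\omega_\bullet$ follows from \cite[Lemma~1]{Dong2012} once $\omega^\sharp_\bullet(t)\lesssim\sum_{j\ge0}2^{-j\beta}(\tilde\omega_\bullet(2^jt)[2^jt\le1]+\tilde\omega_\bullet(1)[2^jt>1])$ is checked using that $\tilde\omega_\bullet$ obeys \eqref{eq10.31tu} (Remark~\ref{rmk2.33}).

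I do not expect a genuine obstacle here, since everything is parallel to material already proved; the points needing care are bookkeeping ones. First, one must use the \emph{same} $\kappa\in(0,\tfrac12)$ in regime (i) as is produced inside Lemma~\ref{lem-01nd}: this is legitimate because both geometric-decay constants $C_0$ depend only on $n,\lambda,\Lambda,p$, so a single $\kappa$ small enough to satisfy both smallness conditions works. Second, one must track that the infimum defining $\phi$ is over all of $\mathbb{S}(n)$ whereas that defining $\varphi$ is over $\mathbb{S}_0(n)$, so the comparison always runs in the direction $\phi\lesssim\varphi$ and the reverse is never needed. Third — but this is already internal to Lemma~\ref{lem-01nd} — the reason the comparison matrix for $\varphi$ lies in $\mathbb{S}_0(n)$ is that on the flat portion $D_{nn}v$ is determined by the tangential second derivatives via $D_{nn}v=-(\bar a^{nn})^{-1}\sum_{(i,j)\ne(n,n)}\bar a^{ij}D_{ij}v$, which vanish there for the corrector; so at the final assembly stage no new analytic input beyond Lemmas~\ref{lem-01nd} and \ref{lem-02} is required.
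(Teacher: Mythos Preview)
Your proposal is correct and follows precisely the approach the paper intends: the paper proves this lemma by saying that it is obtained ``by modifying the proof of Lemma~\ref{lem-02} in a straightforward way,'' and your outline is exactly that straightforward modification, with the same three-regime split, the same use of Lemma~\ref{lem-01nd} in place of Lemma~\ref{lem-01}, the same interior iteration from \cite{DK16}, and the same construction of $\hat\omega_\bullet$. The bookkeeping points you flag (common choice of $\kappa$, the one-sided comparison $\phi\lesssim\varphi$ via $\mathbb S_0(n)\subset\mathbb S(n)$) are the only places care is needed, and you have them right.
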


\begin{lemma}						
We have
\[
\norm{D^2 u}_{L^\infty(B^+_2)} \le C \norm{D^2 u}_{L^1(B^+_4)} + C\int_0^{1} \frac{\hat\omega_g(t)}t \,dt,
\]
where $C>0$ is a constant depending only on $n$, $\lambda$, $\Lambda$, $p$, and $\omega_{\mathbf{A}}$.
\end{lemma}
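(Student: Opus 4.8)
The plan is to reproduce the proof of Lemma~\ref{lem-03} essentially word for word, replacing $Du$ by $D^2u$, the vectors $\vec q_{x,r}\in\bR^n$ by the symmetric matrices $\mathbf q_{x,r}\in\mathbb S(n)$ fixed above, and the $\phi$-estimate of Lemma~\ref{lem-02} by its non-divergence analogue (the immediately preceding lemma). As in the proof of Proposition~\ref{prop01nd}, one argues under the a priori hypothesis $u\in C^2(\overline B{}^+_3)$, the general case then following from the standard approximation argument.

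First, for $x\in B^+_3$ and $0<r\le\frac14$, averaging the pointwise inequality
\[
\abs{\mathbf q_{x,r}-\mathbf q_{x,\frac12 r}}^p\le\abs{D^2u(y)-\mathbf q_{x,r}}^p+\abs{D^2u(y)-\mathbf q_{x,\frac12 r}}^p
\]
over $y\in B(x,\frac12 r)\cap B^+_4$ and taking $p$th roots gives $\abs{\mathbf q_{x,r}-\mathbf q_{x,\frac12 r}}\lesssim\phi(x,r)+\phi(x,\frac12 r)$, and iterating this yields $\abs{\mathbf q_{x,2^{-k}r}-\mathbf q_{x,r}}\lesssim\sum_{j=0}^k\phi(x,2^{-j}r)$. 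The preceding lemma forces $\phi(x,2^{-k}r)\to 0$, so the a priori hypothesis gives $\mathbf q_{x,2^{-k}r}\to D^2u(x)$; letting $k\to\infty$ and using the preceding lemma together with Remark~\ref{rmk2.33} (which makes the series collapse into Dini integrals of $\hat\omega_{\mathbf{A}}$ and $\hat\omega_g$) produces
\[
\abs{D^2u(x)-\mathbf q_{x,r}}\lesssim r^{-n}\norm{D^2u}_{L^1(B(x,3r)\cap B^+_4)}+\norm{D^2u}_{L^\infty(B(x,5r)\cap B^+_4)}\int_0^r\frac{\hat\omega_{\mathbf{A}}(t)}{t}\,dt+\int_0^r\frac{\hat\omega_g(t)}{t}\,dt.
\]
Next I would use the trivial estimate $\abs{\mathbf q_{x,r}}\lesssim\phi(x,r)+(\fint_{B(x,r)\cap B^+_4}\abs{D^2u}^p)^{1/p}\lesssim r^{-n}\norm{D^2u}_{L^1(B(x,r)\cap B^+_4)}$, combine it with the previous display, and take the supremum over $x\in B(x_0,r)\cap B^+_4$ for $x_0\in B^+_3$, arriving at
\[
\norm{D^2u}_{L^\infty(B(x_0,r)\cap B^+_4)}\le Cr^{-n}\norm{D^2u}_{L^1(B(x_0,4r)\cap B^+_4)}+C\norm{D^2u}_{L^\infty(B(x_0,6r)\cap B^+_4)}\int_0^r\frac{\hat\omega_{\mathbf{A}}(t)}{t}\,dt+C\int_0^r\frac{\hat\omega_g(t)}{t}\,dt.
\]

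Finally, I would fix $r_0<\frac14$ so small that $C\int_0^{r_0}\hat\omega_{\mathbf{A}}(t)/t\,dt\le 3^{-n}$, turning the last display into an absorption inequality, and then run the dyadic exhaustion of Lemma~\ref{lem-03} verbatim: with $r_k=3-2^{1-k}$ one has $B(x_0,6r)\cap B^+_4\subset B^+_{r_{k+1}}$ whenever $x_0\in B^+_{r_k}$ and $r\le 2^{-k-3}$, so fixing $k_0$ with $2^{-k_0-3}\le r_0$, multiplying the inequality by $3^{-kn}$ and summing over $k\ge k_0$ gives the geometric self-improvement; since $u\in C^2(\overline B{}^+_3)$ all the series involved converge, and the asserted bound $\norm{D^2u}_{L^\infty(B^+_2)}\lesssim\norm{D^2u}_{L^1(B^+_4)}+\int_0^1\hat\omega_g(t)/t\,dt$ drops out. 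I expect no genuine obstacle here; the only things needing attention are the harmless bookkeeping with symmetric matrices in place of vectors (the inequalities above are insensitive to this, $\mathbb S(n)$ being a finite-dimensional normed space) and checking that the preceding lemma is always applied on balls whose threefold and fivefold enlargements stay inside $B^+_4$, which is automatic for the radii that occur.
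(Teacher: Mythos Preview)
Your proposal is correct and is exactly the approach the paper takes: the paper states that this lemma follows ``by modifying the proof of Lemma~\ref{lem-03} in a straightforward way,'' and your write-up is precisely that straightforward modification, replacing $Du$ by $D^2u$ and $\vec q_{x,r}\in\bR^n$ by $\mathbf q_{x,r}\in\mathbb S(n)$ throughout.
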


\begin{lemma}						
Let $\beta \in (0,1)$.
For any $x \in B^+_3$ and $0< r \le \frac15$, we have
\[
\abs{D^2u(x)-\mathbf q_{x,r}} \le Cr^{\beta}\,\norm{D^2u}_{L^1(B(x,\frac35) \cap B^+_4)} + C\norm{D^2u}_{L^\infty(B(x,1)\cap B^+_4)}\, \omega^*_{\mathbf{A}}(r) + C\omega_g^*(r),
\]
where $C=C(n, \lambda, \Lambda,p, \beta)$ are constants
and $\omega^*(t)$ is defined as in \eqref{eq10.46tu}.
\end{lemma}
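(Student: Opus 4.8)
The plan is to transcribe the proof of Lemma~\ref{lem-04} with the substitutions $Du \to D^2u$, $\vec q_{x,r}\in\bR^n \to \mathbf q_{x,r}\in\mathbb S(n)$, $\varphi(\bar x,r) \to$ its non-divergence counterpart (with the infimum taken over $\mathbb S_0(n)$), and $\vec g \to g$, using Lemma~\ref{lem-01nd} and the non-divergence analogue of Lemma~\ref{lem-02} stated above in place of Lemmas~\ref{lem-01} and \ref{lem-02}, and the interior $C^2$ machinery of \cite[Theorem~1.6]{DK16} (together with \cite[Lemma~2.7]{DK16}) in place of the interior $C^1$ machinery. Writing $\bar x=(x^1,\dots,x^{n-1},0)$ and taking $\kappa\in(0,\tfrac12)$ to be the constant fixed in Lemma~\ref{lem-01nd}, I would first note that $u\in C^2(\overline B{}^+_3)$ forces $\mathbf q_{x,\kappa^i r}\to D^2u(x)$, so telescoping (with the matrix version of \eqref{eq1426fr}) gives
\[
\abs{D^2u(x)-\mathbf q_{x,r}}\le\sum_{i=0}^\infty\abs{\mathbf q_{x,\kappa^i r}-\mathbf q_{x,\kappa^{i+1}r}}\lesssim\sum_{i=0}^\infty\phi(x,\kappa^i r),
\]
and it remains only to bound $\sum_{i=0}^\infty\phi(x,\kappa^i r)$ by the right-hand side of the asserted inequality.

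For $r\le x^n$ the ball $B(x,r)$ lies in $B^+_4$, so $\phi(x,\cdot)$ is the interior quantity of \cite{DK16}; the interior iteration inequality for $\phi$ from \cite[Theorem~1.6]{DK16} together with \cite[Lemma~2.7]{DK16} give
\[
\sum_{i=0}^\infty\phi(x,\kappa^i r)\lesssim\phi(x,r)+\norm{D^2u}_{L^\infty(B(x,\frac15)\cap B^+_4)}\int_0^r\frac{\tilde\omega_{\mathbf A}(t)}t\,dt+\int_0^r\frac{\tilde\omega_g(t)}t\,dt,
\]
and then the non-divergence analogue of Lemma~\ref{lem-02} dominates $\phi(x,r)$ by $r^\beta\norm{D^2u}_{L^1(B(x,\frac35)\cap B^+_4)}+\norm{D^2u}_{L^\infty(B(x,1)\cap B^+_4)}\hat\omega_{\mathbf A}(r)+\hat\omega_g(r)$; recalling the definition \eqref{eq10.46tu} of $\omega^*_\bullet$, this is the asserted bound in this case.

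For $r>x^n$ I would pick $i_0$ with $\kappa^{i_0+1}r\le x^n<\kappa^{i_0}r$ and split the series at $i_0$. The tail $\sum_{i\ge i_0+1}$ is handled by the same interior iteration and summation centered at $x$ (at scales $\le x^n$), giving
\[
\sum_{i=i_0+1}^\infty\phi(x,\kappa^i r)\lesssim\phi(x,\kappa^{i_0+1}r)+\norm{D^2u}_{L^\infty(B(x,r)\cap B^+_4)}\int_0^r\frac{\tilde\omega_{\mathbf A}(t)}t\,dt+\int_0^r\frac{\tilde\omega_g(t)}t\,dt.
\]
For $0\le i\le i_0$ I would use the non-divergence analogues of \eqref{eq1448fr} and \eqref{eq20.09fr}: since $B(x,\kappa^i r)\cap B^+_4\subset B^+(\bar x,2\kappa^i r)$ and every matrix of $\mathbb S_0(n)$ is a competitor in the infimum defining $\phi$, one has $\phi(x,\kappa^i r)\lesssim\varphi(\bar x,2\kappa^i r)$ for $0\le i\le i_0$ and $\phi(x,\kappa^{i_0+1}r)\lesssim\varphi(\bar x,2\kappa^{i_0}r)$, so $\sum_{i=0}^\infty\phi(x,\kappa^i r)$ is at most $\sum_{i=0}^{i_0}\varphi(\bar x,2\kappa^i r)$ plus the two integral terms above. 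Finally, iterating Lemma~\ref{lem-01nd} as in \eqref{eq22.25f}, summing by means of property \eqref{eq10.31tu} of $\tilde\omega_\bullet$, and bounding the remaining term $\varphi(\bar x,2r)$ once more by Lemma~\ref{lem-01nd} yields
\[
\sum_{i=0}^{i_0}\varphi(\bar x,2\kappa^i r)\lesssim r^\beta\norm{D^2u}_{L^1(B(x,\frac35)\cap B^+_4)}+\norm{D^2u}_{L^\infty(B(x,1)\cap B^+_4)}\Bigl(\tilde\omega_{\mathbf A}(4r)+\int_0^r\frac{\tilde\omega_{\mathbf A}(4t)}t\,dt\Bigr)+\tilde\omega_g(4r)+\int_0^r\frac{\tilde\omega_g(4t)}t\,dt.
\]
Collecting the pieces and using \eqref{eq10.46tu} again closes this case.

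Since this is essentially a routine transcription of the divergence-form argument, I expect no genuinely new difficulty; the one point that will require a moment's care — and where an error is most likely to slip in — is verifying that the two comparison inequalities relating the interior quantity $\phi(x,\cdot)$ to the boundary quantity $\varphi(\bar x,\cdot)$ survive the passage from gradients and vectors to Hessians and symmetric matrices, i.e. that the ball inclusions and the admissibility of $\mathbb S_0(n)$-competitors work exactly as in \eqref{eq1448fr} and \eqref{eq20.09fr}. The telescoping, the Dini-type summations, and the final assembly into $\omega^*_\bullet$ are identical to the divergence case.
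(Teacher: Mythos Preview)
Your proposal is correct and is precisely the approach the paper takes: it states the lemma and simply remarks that it follows by modifying the proof of Lemma~\ref{lem-04} in a straightforward way, exactly as you outline. Your one flagged point of care---that $\mathbb S_0(n)\subset\mathbb S(n)$ makes every $\mathbb S_0(n)$-competitor admissible in the infimum defining $\phi$, so the analogues of \eqref{eq1448fr} and \eqref{eq20.09fr} go through---is the only thing to check, and it does.
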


With the above lemmas at hand, we obtain, similar to \eqref{eq08.33st}, the following estimates for $x, y \in B^+_1$:
\begin{multline*}					
\abs{D^2 u(x)-D^2 u(y)} \le  C\norm{D^2u}_{L^1(B^+_2)}\,\abs{x-y}^\beta \\
+C\left( \norm{D^2 u}_{L^1(B^+_4)} + \int_0^1 \frac{\hat\omega_g(t)}t\,dt \right)  \omega^*_{\mathbf{A}}(\abs{x-y})+ C\omega^*_g(\abs{x-y}),
\end{multline*}
where $\beta =\beta(n,\lambda, \Lambda, p)$ and $C=C(n, \lambda, \Lambda, p, \omega_{\mathbf{A}})$, and $\omega^*_\bullet(t)$ is defined as in \eqref{eq10.46tu}.
$\beta \in (0,1)$ is any given number, $C=C(n, \lambda, \Lambda, p, \omega_{\mathbf{A}}, \beta)$, and $\omega^*_\bullet(t)$ is defined as in \eqref{eq10.46tu}.
We have shown that $u \in C^2(\bar B^+_1)$ as desired.
This completes the proof of Proposition~\ref{prop01nd} and that of Theorem~\ref{thm-main-nd}.
\qed

\begin{remark}
Instead of the condition of Dini mean oscillations (in the $L^1$ sense), we may also consider coefficients and data with Dini mean oscillations in the $L^p$ sense with some $p\in (1,\infty)$, i.e.,
the function $\omega_{g, p}: \bR_+ \to \bR$ defined by
\begin{equation*}					
\omega_{g,p}(r):=\sup_{x\in \overline{\Omega}} \left(\fint_{\Omega(x,r)} \,\abs{g(y)-\bar {g}_{\Omega(x,r)}}^p\,dy \right)^{\frac 1 p}
\end{equation*}
is a Dini function.
In this case, by modifying the proofs below and using the $L^p$ estimates instead of the weak type-$(1,1)$ estimates, we can show that in the non-divergence case $D^2 u$ has Dini mean oscillations in the $L^p$ sense with the same $p$.
Similar results hold for divergence form equations, and the adjoint problem of non-divergence form equations (with the boundary data $\psi=0$).
\end{remark}

\subsection{Proof of Theorem~\ref{thm-main-adj}}    
As before, we adopt the usual summation convention over repeated indices.
We first establish the following interior $C^0$ estimates.
\begin{proposition}				\label{prop2.45p-adj}
For any $p\in (1,\infty)$, we have $u\in L^p(\Omega)$.
Moreover, for any $\Omega' \subset\subset \Omega$, we have $u \in C(\overline \Omega{}')$.
\end{proposition}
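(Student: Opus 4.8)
The plan is to imitate the two-step pattern of Propositions~\ref{prop2.5p} and \ref{prop2.45p}: first promote $u$ from $L^2(\Omega)$ to $L^p(\Omega)$ for every finite $p$ by duality with the $W^{2,p}$-theory for $\cL$, and then absorb the datum $f$ into a matrix-valued right-hand side of Dini mean oscillation, so that the interior continuity follows from the corresponding interior result for adjoint solutions in \cite{DK16} (or, if a self-contained argument is wanted, from the Campanato iteration already developed in this section).

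For the $L^p$-integrability I would argue by duality. Since $\mathbf{A}$ is of Dini mean oscillation on a neighborhood of $\overline\Omega$ it is VMO there, and $\Omega$ is $C^{1,1}$, so the Dirichlet problem $\cL v=h$ in $\Omega$, $v=0$ on $\partial\Omega$, is uniquely solvable in $W^{2,p'}(\Omega)\cap W^{1,p'}_0(\Omega)$ with $\norm{v}_{W^{2,p'}(\Omega)}\le C\norm{h}_{L^{p'}(\Omega)}$ for every $p'\in(1,\infty)$ (e.g.\ \cite[Theorem~11.2.3]{Kr08}). The right-hand side of \eqref{eq13.53pde}, namely $v\mapsto\int_\Omega\tr(\mathbf{g}\,D^2v)+\int_\Omega fv+\int_{\partial\Omega}\psi\,\mathbf{A}Dv\cdot\nu$, is a bounded linear functional on $W^{2,p'}(\Omega)$ for each $p'$: $\mathbf{g}\in L^\infty(\Omega)$ since a Dini-mean-oscillation function on a $C^{1,1}$ domain is continuous up to the boundary, the embedding $W^{2,p'}\hookrightarrow L^{s'}$ absorbs $f\in L^s(\Omega)$ precisely because $s>\tfrac n2$, and the trace inequality $\norm{Dv}_{L^1(\partial\Omega)}\lesssim\norm{v}_{W^{2,1}(\Omega)}$ absorbs $\psi\in C(\partial\Omega)$. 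Composing with the solution operator $h\mapsto v$ and using the uniqueness of the $L^2$ adjoint solution, one identifies $u$ with the representative that lies in $L^p(\Omega)$ for every $p<\infty$.

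For the interior continuity I would let $\eta$ be the Newtonian potential of $f\chi_\Omega$, so $\Delta\eta=f$ in $\Omega$ and, because $s>\tfrac n2$, $\eta$ is H\"older continuous on $\overline\Omega$; then $\eta\,\mathbf{I}$ is of Dini mean oscillation in $\Omega$ and $\dv^2(\eta\,\mathbf{I})=f$, so in the interior of $\Omega$ the function $u$ is a local adjoint solution of $D_{ij}(a^{ij}u)=\dv^2\mathbf{g}'$ with $\mathbf{g}':=\mathbf{g}+\eta\,\mathbf{I}$ again of Dini mean oscillation. The interior continuity statement for adjoint solutions from \cite{DK16} then gives $u\in C(\overline{\Omega'})$ for every $\Omega'\subset\subset\Omega$, with a modulus of continuity controlled by $n,\lambda,\Lambda,\omega_{\mathbf{A}},\omega_{\mathbf{g}},\norm{\mathbf{g}}_{L^\infty(\Omega)},\norm{f}_{L^s(\Omega)},\dist(\Omega',\partial\Omega)$ and $\norm{u}_{L^1(\Omega)}$. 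Alternatively, one reproves this by the Campanato scheme of Lemmas~\ref{lem-01}--\ref{lem-04}: fixing $p\in(0,1)$ and setting $\phi(x,r):=\inf_{c\in\bR}\bigl(\fint_{B(x,r)}\abs{u-c}^p\bigr)^{1/p}$, one freezes $\mathbf{A}$ at a point and splits $u=v+w$ on a ball, where $D_{ij}(\bar a^{ij}w)=\dv^2\bigl(\mathbf{g}'-\bar{\mathbf{g}}'-(\mathbf{A}-\bar{\mathbf{A}})u\bigr)$ with the natural boundary data, while $v=u-w$ solves $\bar a^{ij}D_{ij}v=0$ (because $\dv^2\bar{\mathbf{g}}'=0$) and is therefore smooth with the decay $\bigl(\fint_{B_{\kappa r}}\abs{v-\bar v_{B_{\kappa r}}}^p\bigr)^{1/p}\le C_0\kappa\bigl(\fint_{B_r}\abs{v-c}^p\bigr)^{1/p}$; Lemma~\ref{lem-weak11-adj} applied with scaling yields $\bigl(\fint_{B_r}\abs{w}^p\bigr)^{1/p}\lesssim\omega_{\mathbf{A}}(2r)\norm{u}_{L^\infty(B_{2r})}+\omega_{\mathbf{g}'}(2r)$, and iterating $\phi(x,\kappa r)\le\kappa^\beta\phi(x,r)+C\bigl(\omega_{\mathbf{A}}(2r)\norm{u}_{L^\infty}+\omega_{\mathbf{g}'}(2r)\bigr)$ against the Dini condition \eqref{eq10.31tu} produces the modulus of continuity, the interior $L^\infty$-bound on $u$ being obtained en route by the filling-holes argument of Lemma~\ref{lem-03} and the a priori continuity of $u$ being justified by mollifying $\mathbf{A}$.

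The one genuinely delicate point, I expect, is the structure of the frozen-coefficient split: the coefficient oscillation must be kept multiplying $u$ itself and placed under a \emph{double} divergence, so that the argument never differentiates the merely $L^p$ function $u$ (in contrast with the non-divergence case, where the error term is $(\mathbf{A}-\bar{\mathbf{A}})D^2u$). The boundary integrals in the localized constant-coefficient problems are harmless in the interior, since there the test functions in \eqref{eq13.53pde} may be taken in $W^{2,p'}_0$ and every boundary term drops out.
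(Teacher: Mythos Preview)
Your duality argument for the $L^p$-integrability is correct and is in fact cleaner than the paper's route: the paper instead introduces the auxiliary function $w\in W^{1,2}_0(\Omega)$ solving
\[
\Delta w = D_i(b^i u)-cu+f\ \text{in }\Omega,\qquad w=0\ \text{on }\partial\Omega,
\]
sets $\mathbf{g}'=\mathbf{g}+w\mathbf{I}$, and observes that $u$ becomes an adjoint solution of $D_{ij}(a^{ij}u)=\dv^2\mathbf{g}'$ with the corresponding boundary data; then \cite[Lemma~2]{EM2016} yields $u\in L^r$ with $r>2$, and a bootstrap (feeding $u\in L^r$ back into the equation for $w$) gives $u\in L^p$ for all $p<\infty$. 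Your single-shot duality avoids the iteration, at the cost of invoking the full $W^{2,p'}$ solvability for $\cL$ with unbounded lower-order coefficients.

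There is, however, a real gap in your interior continuity step. You absorb only $f$ into the double divergence via the Newtonian potential $\eta$, and then assert that $u$ is a local adjoint solution of $D_{ij}(a^{ij}u)=\dv^2\mathbf{g}'$. This is false as stated: the adjoint operator is $\cL^*u=D_{ij}(a^{ij}u)-D_i(b^iu)+cu$, so the lower-order terms $-D_i(b^iu)+cu$ are still present and must also be absorbed before you can invoke \cite[Theorem~1.10]{DK16} (which treats the pure second-order adjoint equation). This is precisely why the paper solves $\Delta w=D_i(b^iu)-cu+f$ rather than just $\Delta\eta=f$: once $u\in L^p$ for all $p$, the right-hand side lies in $W^{-1,q-\varepsilon}+L^{\min(q/2,s)-\varepsilon}$, and the resulting $w$ is H\"older continuous by Morrey--Sobolev, so $\mathbf{g}'=\mathbf{g}+w\mathbf{I}$ is of Dini mean oscillation. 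Your Newtonian-potential idea can be repaired in the same way---replace $\eta$ by the solution of this full Poisson problem---but as written the reduction to the principal-part adjoint equation is incomplete.
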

\begin{proof}
Let $w$ be a unique $W^{1,2}_0(\Omega)$ solution of
\begin{equation}			\label{eq18.16hgm}
\Delta w= D_i(b^i u)-cu +f\;\text{ in }\;\Omega,\quad w=0 \;\text{ on }\;\partial\Omega.
\end{equation}
Since $u \in L^2(\Omega)$, we have $w \in L^{r}(\Omega)$, where $\frac1r=\frac12+\frac1q-\frac1n<\frac12$.
Then, by setting $\mathbf{g}'=\mathbf{g}+w \mathbf{I} \in L^r(\Omega)$, we see that $u$ becomes an adjoint solution of
\begin{equation}			\label{eq15.55kaist}
D_{ij}(a^{ij} u)=\dv^2 \mathbf{g}'\;\text{ in }\;\Omega,\quad
 u= \psi +\frac{\mathbf{g}'\nu\cdot \nu}{\mathbf{A}\nu\cdot \nu} \; \text{ on }\; \partial\Omega.
\end{equation}
Therefore, by \cite[Lemma~2]{EM2016}, we see that $u \in L^{r}(\Omega)$.
By bootstrapping, i.e., feeding $u\in L^r(\Omega)$ back to \eqref{eq18.16hgm}, we find that $u \in L^p(\Omega)$ for any $p\in (1,\infty)$ with $\norm{u}_{L^p(\Omega)}$ controlled by the given data.
Then, by the Morrey-Sobolev embedding, we have $w \in C^{0,\mu}(\Omega)$ for some $\mu>0$ with $\norm{w}_{C^{0,\mu}(\Omega)}$ controlled by the given data, and
\[
\varrho_{w}(t) \le C\left( \norm{\mathbf{g}}_{L^\infty(\Omega)} +  \norm{f}_{L^{s}(\Omega)}+ \norm{\psi}_{L^\infty(\partial\Omega)}+ \norm{u}_{L^1(\Omega)}\right) t^\mu.
\]
Therefore, $\omega_{\mathbf g'}$ is a Dini function that is completely determined by the given data.
By \cite[Theorem~1.10]{DK16}, we thus find that $u \in C(\overline \Omega{}')$ and $\norm{u}_{C(\overline \Omega{}')}$ is bounded by a constant $C$ depending only on the given data, $\norm{u}_{L^1(\Omega)}$, and $\Omega'$.
\end{proof}

Next, we turn to continuity estimate near the boundary.
The following lemma shows that it is enough to consider the case when $\psi=0$ in \eqref{eq15.55kaist}.
\begin{lemma}				\label{lem-cont-bdry-adj}
The adjoint problem
\[
D_{ij}(a^{ij} v)=0\;\text{ in }\;\Omega,\quad v=\psi\;\text{ on }\;\partial\Omega
\]
has a unique solution $v \in C(\overline\Omega)$.
\end{lemma}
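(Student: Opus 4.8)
The plan is to derive uniqueness by duality with the Dirichlet problem for the pre-adjoint operator $\cL_0w:=a^{ij}D_{ij}w$ (well-defined since $\mathbf{A}$ is symmetric), and existence by approximating $\mathbf{A}$ with smooth matrices, solving classically, and passing to the limit through a Poisson-kernel representation. For uniqueness, suppose $v_1,v_2\in C(\overline\Omega)$ both solve the problem with datum $\psi$; then $v:=v_1-v_2\in C(\overline\Omega)\subset\bigcap_{1<p<\infty}L^p(\Omega)$ is an adjoint solution with $\psi\equiv 0$, i.e. $\int_\Omega v\,\cL_0w\,dx=0$ for every $w\in W^{2,p'}(\Omega)\cap W^{1,p'}_0(\Omega)$. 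Since $\mathbf{A}$ is of Dini mean oscillation it is uniformly continuous on $\overline\Omega$, so the problem $\cL_0w=h$ in $\Omega$, $w=0$ on $\partial\Omega$, is uniquely solvable in $W^{2,p'}(\Omega)\cap W^{1,p'}_0(\Omega)$ for each $h\in C^\infty_c(\Omega)$ by the $W^{2,p}$ theory for continuous coefficients on a $C^{1,1}$ domain; testing against these $w$ forces $\int_\Omega v\,h=0$ for all such $h$, hence $v\equiv 0$.

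For existence, let $\mathbf{A}^\varepsilon=(a^{ij,\varepsilon})$ be smooth, uniformly elliptic with the same structural constants, with $\mathbf{A}^\varepsilon\to\mathbf{A}$ a.e.\ and a common modulus of continuity (mollification does not worsen it). After expansion, $D_{ij}(a^{ij,\varepsilon}v)=0$ is a genuine uniformly elliptic equation with smooth coefficients, and its Dirichlet problem with datum $\psi$ has a unique classical solution $v^\varepsilon\in C(\overline\Omega)\cap C^\infty(\Omega)$: the homogeneous problem has only the trivial solution by the uniqueness argument above, $D_{ij}(a^{ij,\varepsilon}\cdot)$ being the formal adjoint of the maximum-principle operator $\cL_0^\varepsilon w=a^{ij,\varepsilon}D_{ij}w$, so Schauder--Fredholm theory applies. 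Writing $g^\varepsilon_x$ for the Green function of $\cL_0^\varepsilon$ and testing the defining identity against regularizations of $g^\varepsilon_x$ gives $v^\varepsilon(x)=\int_{\partial\Omega}\psi(z)\,K^\varepsilon(x,z)\,dS_z$ with $K^\varepsilon(x,z)=\mathbf{A}^\varepsilon(z)Dg^\varepsilon_x(z)\cdot\nu(z)=\partial_\nu g^\varepsilon_x(z)\,(\mathbf{A}^\varepsilon\nu\cdot\nu)(z)\ge 0$, since $g^\varepsilon_x\le 0$ vanishes on $\partial\Omega$. Consequently $v^\varepsilon\ge 0$ whenever $\psi\ge 0$, and $\abs{v^\varepsilon(x)}\le\norm{\psi}_{L^\infty(\partial\Omega)}\int_{\partial\Omega}K^\varepsilon(x,z)\,dS_z$.

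The construction closes once one has $\varepsilon$-uniform bounds: (i) $\sup_{x\in\Omega}\int_{\partial\Omega}K^\varepsilon(x,z)\,dS_z\le C$; and (ii) for each fixed $z_0\in\partial\Omega$ and $r>0$, $\int_{\partial\Omega\setminus B(z_0,r)}K^\varepsilon(x,z)\,dS_z\to 0$ as $x\to z_0$. Item (ii) follows from $g^\varepsilon_x\to 0$ locally uniformly on $\overline\Omega\setminus\{z_0\}$ as the pole $x\to z_0\in\partial\Omega$, together with interior gradient bounds. Granting (i) and (ii), one gets $\norm{v^\varepsilon}_{L^\infty(\Omega)}\le C\norm{\psi}_{L^\infty(\partial\Omega)}$, and splitting $\psi=\psi(z_0)+(\psi-\psi(z_0))$ in the representation yields a modulus of continuity for $v^\varepsilon$ up to $\partial\Omega$ controlled by that of $\psi$ and by the data, uniformly in $\varepsilon$; the interior modulus is uniform as well, by Proposition~\ref{prop2.45p-adj} applied to the adjoint solution $v^\varepsilon$ of $D_{ij}(a^{ij,\varepsilon}v^\varepsilon)=0$. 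By Arzel\`a--Ascoli a subsequence converges uniformly on $\overline\Omega$ to some $v\in C(\overline\Omega)$ with $v=\psi$ on $\partial\Omega$; passing to the limit in the defining identity (legitimate because $\mathbf{A}^\varepsilon\to\mathbf{A}$ and the test functions are fixed) shows that $v$ is an adjoint solution of $D_{ij}(a^{ij}v)=0$, and by the uniqueness already established the whole family converges. This yields the lemma, with the bonus estimate $\norm{v}_{L^\infty(\Omega)}\le C\norm{\psi}_{L^\infty(\partial\Omega)}$ in the limit.

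The main obstacle is (i), an $\varepsilon$-uniform $L^1$ bound for the Poisson kernel of $D_{ij}(a^{ij,\varepsilon}\cdot)$. It cannot be extracted from the expanded operator, whose zeroth-order coefficient $D_{ij}a^{ij,\varepsilon}$ has no sign and is of size $\sim\varepsilon^{-2}$; instead one exploits the divergence structure and compares $g^\varepsilon_x$ near $\partial\Omega$ with the solution $h^\varepsilon$ of $\cL_0^\varepsilon h^\varepsilon=-1$, $h^\varepsilon=0$ on $\partial\Omega$, whose gradient is bounded on $\overline\Omega$ by a constant depending only on $n,\lambda,\Lambda,\Omega$ and the common modulus of continuity of the $\mathbf{A}^\varepsilon$ (through the $\varepsilon$-independent boundary $W^{2,p}$ estimate and the ABP bound). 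Together with standard interior bounds on $g^\varepsilon_x$, this gives $\abs{\partial_\nu g^\varepsilon_x}\le C\abs{\partial_\nu h^\varepsilon}\le C$ on $\partial\Omega$, hence (i); equivalently, (i) is a manifestation of the Harnack inequality for nonnegative adjoint solutions (Lemma~\ref{lem8.06}), whose constants depend only on $n,\lambda,\Lambda$ and the Dini mean oscillation of $\mathbf{A}$. The remaining steps are routine.
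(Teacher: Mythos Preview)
Your uniqueness argument is fine and standard. The existence strategy (smooth approximation, Poisson-kernel representation, Arzel\`a--Ascoli) is reasonable in outline but differs substantially from the paper's proof, and the key step (i) has a genuine gap.

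The paper's proof is short and rests on the notion of \emph{normalized adjoint solution}. One takes the nonnegative adjoint solution $W$ to $D_{ij}(a^{ij}W)=0$ on a large ball $B_{10}\supset\overline\Omega$ with $W=1$ on $\partial B_{10}$. Since $\overline\Omega$ sits in the interior of $B_{10}$, the interior estimates of \cite[Theorem~1.10]{DK16} and the Harnack inequality of Lemma~\ref{lem8.06} give that $W$ is continuous and bounded above and below on $\overline\Omega$, with constants depending only on $n,\lambda,\Lambda,\omega_{\mathbf A}$. Then \cite[Theorem~2.8]{Es00} supplies a unique normalized adjoint solution $\tilde v\in C(\overline\Omega)$ to $D_{ij}(a^{ij}\tilde v W)=0$ in $\Omega$, $\tilde v=\psi/W$ on $\partial\Omega$; the point is that normalized adjoint solutions obey a maximum principle, so Perron-type existence goes through. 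Setting $v=\tilde v W$ finishes the proof.

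Your argument for (i) breaks down precisely where this normalization is needed. You claim a pointwise comparison $\abs{g^\varepsilon_x}\lesssim h^\varepsilon$ near $\partial\Omega$, hence $\abs{\partial_\nu g^\varepsilon_x}\le C$ on $\partial\Omega$. But this cannot hold uniformly in $x$: even for the Laplacian on a ball, the Poisson kernel $P(x,z)\approx d(x)\abs{x-z}^{-n}$ is unbounded as $x\to z$. The comparison constant in $\abs{g^\varepsilon_x}\le C h^\varepsilon$ necessarily blows up like $d(x)^{1-n}$ as $x\to\partial\Omega$, because the interior bound on $g^\varepsilon_x$ near its pole is of order $d(x)^{2-n}$ while $h^\varepsilon$ is only of order $d(x)$ there. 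So you do not get a uniform pointwise bound on $K^\varepsilon(x,\cdot)$, and the passage to the $L^1$ bound $\int_{\partial\Omega}K^\varepsilon(x,z)\,dS_z\le C$ is not justified. Note that this integral equals $W^\varepsilon_\Omega(x)$, the adjoint solution on $\Omega$ with boundary datum $1$; unlike the Laplacian case, it is \emph{not} identically $1$, and controlling it up to $\partial\Omega$ is exactly the boundary continuity of adjoint solutions you are trying to prove. The invocation of Lemma~\ref{lem8.06} does not help directly, since that is an interior Harnack inequality.

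The fix is to normalize: with $W^\varepsilon$ defined on the larger ball as above, $W^\varepsilon_\Omega/W^\varepsilon$ is a normalized adjoint solution on $\Omega$ with boundary value $1/W^\varepsilon$, and the maximum principle for normalized adjoint solutions (Bauman \cite{Ba84}) gives $W^\varepsilon_\Omega\le W^\varepsilon\cdot\max_{\partial\Omega}(1/W^\varepsilon)\le C$. Once you import this ingredient, however, you are essentially carrying out the paper's argument, and it is cleaner to quote \cite[Theorem~2.8]{Es00} directly rather than redo the approximation.
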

The proof of the above lemma is deferred to Section~\ref{sec3}, where we introduce normalized adjoint solutions.
Let $\mathbf{g}'$ be as in the proof of Proposition~\ref{prop2.45p-adj}.
Under a volume preserving mapping of flattening boundary
\[
y=\vec \Phi(x)=(\Phi^1(x),\ldots, \Phi^n(x))	
\]
as before, let $\tilde u(y)=u(x)$, 
which satisfies
\[
D_{ij}(\tilde a^{ij} \tilde u)=\dv^2 \tilde {\mathbf g}' +D_i(\tilde b^i \tilde u),
\]
where
\begin{gather*}
\tilde a^{ij}(y)= D_l\Phi^i D_k\Phi^j a^{kl}(x),
\quad \tilde b_i(y)=D_{kl}\Phi^i a^{kl}(x),\quad
\tilde{\mathbf{g}}'(y)= D \mathbf{\Phi}\tran \mathbf{g}' D\mathbf{\Phi} (x).
\end{gather*}
We may assume without loss of generality that $\vec\Phi$ is a $C^{1,1}$-diffeomorphism on $\bR^n$.
If we set $\tilde w$ to be a solution to
\[
\Delta \tilde w=D_i(\tilde b^i \tilde u)\;\text{ in }\;\vec\Phi(\Omega), \quad \tilde w=0\; \text{ on }\; \vec\Phi(\partial\Omega),
\]
then $\tilde u$ satisfies
\[
D_{ij}(\tilde a^{ij} \tilde u)=\dv^2 (\tilde {\mathbf{g}}'+\tilde w \mathbf{I})\;\text{ in }\;\vec\Phi(\Omega),\quad \tilde u=\frac{(\mathbf{\tilde{g}}'+\tilde w \mathbf{I})\nu\cdot\nu}{\mathbf A\nu\cdot\nu}\;\text{ on }\;\vec\Phi(\partial\Omega).
\]
By Lemma~\ref{lem00} and the proof of Proposition \ref{prop2.45p-adj}, we see that the coefficients $\tilde a^{ij}$ and the data $\tilde{\mathbf g}'+\tilde w \mathbf{I}$ are of Dini mean oscillation.
As before, we are thus reduced to prove the following.
\begin{proposition}					\label{prop01adj}
If $u \in L^{2}(B^+_4)$ is an adjoint solution satisfying
\[
D_{ij}(a^{ij} u)= \dv^2 \mathbf{g}\;\mbox{ in }\;B^+_4,\quad u=\frac{\mathbf{g}\nu\cdot \nu}{\mathbf{A}\nu\cdot\nu}\; \text{ on }\;T(0,4),
\]
then $u \in C(\overline B{}^+_1)$.
\end{proposition}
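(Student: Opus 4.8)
The plan is to rerun the Campanato--Morrey scheme used in the proofs of Propositions~\ref{prop01} and \ref{prop01nd}, this time applied to the oscillation of $u$ itself rather than of $Du$ or $D^2u$; as there, we first derive an a priori modulus of continuity of $u$ under the extra hypothesis $u\in C(\overline B{}^+_3)$, the general case following by mollifying $\mathbf A$ and $\mathbf g$ and passing to the limit. Fix $p\in(0,1)$ and set, for $x\in B^+_4$ and $r>0$,
\[
\phi(x,r):=\inf_{c\in\bR}\Bigl(\fint_{B(x,r)\cap B^+_4}\abs{u-c}^p\Bigr)^{1/p},
\]
and, for $\bar x\in T(0,4)$ and $r>0$, the boundary quantity
\[
\varphi(\bar x,r):=\inf_{c\in\bR}\Bigl(\fint_{B^{+}(\bar x,r)}\abs{u-c}^p\Bigr)^{1/p}.
\]
Observe that the ``projection space'' at the boundary is now simply $\bR$; this is because $u$ equals $\tfrac{\mathbf g\nu\cdot\nu}{\mathbf A\nu\cdot\nu}$ on $T$, which becomes a constant after freezing $\mathbf A$ and $\mathbf g$.

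The key step is the analogue of Lemma~\ref{lem-01}: for $\bar x\in T(0,3)$ and $0<\rho\le r\le\tfrac12$,
\[
\varphi(\bar x,\rho)\le C\Bigl(\tfrac{\rho}{r}\Bigr)^{\beta}r^{-n}\norm{u}_{L^1(B^{+}(\bar x,r))}+C\norm{u}_{L^\infty(B^{+}(\bar x,2r))}\tilde\omega_{\mathbf A}(2\rho)+C\tilde\omega_{\mathbf g}(2\rho),
\]
with $\tilde\omega_\bullet$ as in \eqref{eq14.27w}. To prove it, write $\bar{\mathbf A}=\bar{\mathbf A}_{B^{+}(\bar x,2r)}$, $\bar{\mathbf g}=\bar{\mathbf g}_{B^{+}(\bar x,2r)}$, put
\[
\mathbf G:=(\mathbf g-\bar{\mathbf g})-(\mathbf A-\bar{\mathbf A})u\in L^2(\cD(\bar x,2r)),
\]
and decompose $u=v+w$ on $\cD(\bar x,2r)$, where $w$ solves $D_{ij}(\bar a^{ij}w)=\dv^2\mathbf G$ in $\cD(\bar x,2r)$ with $w=\tfrac{\mathbf G\nu\cdot\nu}{\bar{\mathbf A}\nu\cdot\nu}$ on $\partial\cD(\bar x,2r)$. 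Since $\dv^2\bar{\mathbf g}=0$ and $D_{ij}((a^{ij}-\bar a^{ij})u)=\dv^2((\mathbf A-\bar{\mathbf A})u)$, subtracting identities of the form \eqref{eq13.53pde} shows that $v=u-w$ is an adjoint solution of $D_{ij}(\bar a^{ij}v)=\dv^2\bar{\mathbf g}$ in $\cD(\bar x,2r)$, and on $T(\bar x,r)$ (where $\nu=-\vec e_n$ and $u=g^{nn}/a^{nn}$) one computes
\[
v=\frac{\mathbf g\nu\cdot\nu}{\mathbf A\nu\cdot\nu}-\frac{\mathbf G\nu\cdot\nu}{\bar{\mathbf A}\nu\cdot\nu}=\frac{g^{nn}}{a^{nn}}-\frac{g^{nn}-\bar g^{nn}-(a^{nn}-\bar a^{nn})g^{nn}/a^{nn}}{\bar a^{nn}}=\frac{\bar g^{nn}}{\bar a^{nn}}=\frac{\bar{\mathbf g}\nu\cdot\nu}{\bar{\mathbf A}\nu\cdot\nu},
\]
a constant; equivalently, $(\bar{\mathbf A}v-\bar{\mathbf g})\nu\cdot\nu=0$ on $T(\bar x,r)$, which is precisely the cancellation making the extra term in the boundary condition of \eqref{master-adj} do its job. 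Hence $v$ satisfies a translated, rescaled copy of \eqref{eq1.59} on $B^{+}(\bar x,r)$ with the constant matrices $\bar{\mathbf A}$, $\bar{\mathbf g}$, so, as in the proof of Lemma~\ref{lem-bdry-Lip}, it is smooth up to $T(\bar x,r)$ and Lemma~\ref{lem-bdry-Lip} (after scaling) yields $\norm{Dv}_{L^\infty(B^{+}(\bar x,r/2))}\le Cr^{-1}(\fint_{B^{+}(\bar x,r)}\abs{v-c}^p)^{1/p}$ for every $c\in\bR$, and therefore $\inf_c(\fint_{B^{+}(\bar x,\kappa r)}\abs{v-c}^p)^{1/p}\le C_0\kappa\inf_c(\fint_{B^{+}(\bar x,r)}\abs{v-c}^p)^{1/p}$ for $0<\kappa<\tfrac12$. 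On the other hand, Lemma~\ref{lem-weak11-adj} with scaling, followed by the weak type-$(1,1)$ to $L^p$ ($p<1$) passage (cf.\ \cite[(2.11)]{DK16}), gives $(\fint_{B^{+}(\bar x,r)}\abs{w}^p)^{1/p}\lesssim\fint_{B^{+}(\bar x,2r)}\abs{\mathbf G}\lesssim\omega_{\mathbf g}(2r)+\norm{u}_{L^\infty(B^{+}(\bar x,2r))}\omega_{\mathbf A}(2r)$. Combining the two facts through $u=v+w$ produces $\varphi(\bar x,\kappa r)\le C_0\kappa\varphi(\bar x,r)+C\kappa^{-n/p}(\omega_{\mathbf g}(2r)+\norm{u}_{L^\infty(B^{+}(\bar x,2r))}\omega_{\mathbf A}(2r))$; choosing $\kappa$ small so that $C_0\kappa\le\kappa^{\beta}$ and iterating exactly as in Lemma~\ref{lem-01} (together with $\varphi(\bar x,r)\lesssim r^{-n}\norm{u}_{L^1(B^{+}(\bar x,r))}$) gives the claimed bound.

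From here the argument transcribes Lemmas~\ref{lem-02}, \ref{lem-03}, and \ref{lem-04} with $u$ in place of $Du$ and scalars $c_{x,r}$ in place of the vectors $\vec q_{x,r}$. The three-case analysis of $\phi(x,\rho)$ — using the interior $C^0$ estimate for adjoint solutions \cite[Theorem~1.10]{DK16} when $\rho\le r\le x^n$ — gives the analogue of \eqref{eq10.39tu}; telescoping $c_{x,2^{-k}r}\to u(x)$ and absorbing the top-order term produces $\norm{u}_{L^\infty(B^+_2)}\le C\norm{u}_{L^1(B^+_4)}+C\int_0^1\tfrac{\hat\omega_{\mathbf g}(t)}{t}\,dt$; and the same telescoping together with the analogue of Lemma~\ref{lem-01} gives, for $x\in B^+_3$ and $0<r\le\tfrac15$,
\[
\abs{u(x)-c_{x,r}}\le Cr^\beta\norm{u}_{L^1(B(x,\frac35)\cap B^+_4)}+C\norm{u}_{L^\infty(B(x,1)\cap B^+_4)}\omega^*_{\mathbf A}(r)+C\omega^*_{\mathbf g}(r),
\]
with $\omega^*_\bullet$ as in \eqref{eq10.46tu}; throughout, \cite[Lemma~1]{Dong2012} is used to keep $\tilde\omega_\bullet$, $\hat\omega_\bullet$, $\omega^\sharp_\bullet$, and $\omega^*_\bullet$ Dini functions. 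Finally, for $x,y\in B^+_1$ with $r:=\abs{x-y}<\tfrac15$ one writes $\abs{u(x)-u(y)}\le\abs{u(x)-c_{x,r}}+\abs{c_{x,r}-c_{y,r}}+\abs{c_{y,r}-u(y)}$, bounds the outer terms by the last display and the middle one by $\lesssim\phi(x,r)+\phi(y,r)$, and obtains
\[
\abs{u(x)-u(y)}\lesssim\norm{u}_{L^1(B^+_2)}\abs{x-y}^{\beta}+\Bigl(\norm{u}_{L^1(B^+_4)}+\int_0^1\tfrac{\hat\omega_{\mathbf g}(t)}{t}\,dt\Bigr)\omega^*_{\mathbf A}(\abs{x-y})+\omega^*_{\mathbf g}(\abs{x-y});
\]
for $\abs{x-y}\ge\tfrac15$ one uses $\abs{u(x)-u(y)}\le 2\norm{u}_{L^\infty(B^+_1)}$ and the $L^\infty$ bound just obtained. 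The right-hand side tends to $0$ as $\abs{x-y}\to0$ (in fact it is a Dini modulus of continuity), so $u\in C(\overline B{}^+_1)$.

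The step I expect to be the main obstacle is the boundary estimate above, specifically checking — from the weak formulation \eqref{eq13.53pde} with its inhomogeneous boundary term — that freezing $\mathbf A$ and $\mathbf g$ in \emph{both} the equation and the boundary condition $u=\tfrac{\mathbf g\nu\cdot\nu}{\mathbf A\nu\cdot\nu}$ yields a comparison function $v$ whose boundary values on $T$ form an honest constant (the cancellation displayed above, which is exactly the point of \cite[Definition~2]{EM2016}) and an error $w$ whose only source is a constant-coefficient adjoint problem with datum $\dv^2\mathbf G$, so that the weak type-$(1,1)$ bound of Lemma~\ref{lem-weak11-adj} applies verbatim. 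The secondary point — that the constant-coefficient adjoint solution $v$ with constant boundary data on the flat part of the boundary is regular up to $T$, so that Lemma~\ref{lem-bdry-Lip} applies — is already contained in the proof of Lemma~\ref{lem-bdry-Lip}. Once the analogue of Lemma~\ref{lem-01} is in hand, the remaining steps are a routine rerun of the arguments of Section~\ref{sec2}.
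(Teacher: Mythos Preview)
Your proposal is correct and follows essentially the same approach as the paper: the same decomposition $u=v+w$ with $w$ solving the constant-coefficient adjoint problem with data $\dv^2\mathbf G$ and boundary value $\tfrac{\mathbf G\nu\cdot\nu}{\bar{\mathbf A}\nu\cdot\nu}$, the same use of Lemma~\ref{lem-weak11-adj} for $w$ and Lemma~\ref{lem-bdry-Lip} for $v$, and the same iteration and transcription of Lemmas~\ref{lem-02}--\ref{lem-04}. The only cosmetic difference is that you introduce a separate boundary quantity $\varphi$, whereas the paper observes (as you do) that here the boundary and interior projection spaces coincide ($\bR$), and so works with $\phi$ alone throughout.
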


The rest of this subsection is devoted to the proof of Proposition~\ref{prop01adj}. As in the proof of Propositions~\ref{prop01} and \ref{prop01nd}, we shall derive an a priori estimate of the modulus of continuity of $u$ by assuming that $u$ is in $C(\overline B{}^+_3)$.

Fix any $p \in (0,1)$.
Similar to \eqref{eq12.14fo} and \eqref{eq0955w}, for $x\in B^+_4$ and $r>0$, we define
\[
\phi(x,r):=\inf_{q \in \bR} \left( \fint_{B(x,r) \cap B^{+}_4} \,\abs{u - q}^p \right)^{\frac1p}
\]
and fix a number $q_{x,r}\in \bR$ satisfying
\[
\phi(x,r) = \left( \fint_{B(x,r) \cap B^{+}_4} \, \abs{u - q_{x,r}}^p \right)^{\frac1p}.
\]

The following lemma is in parallel with Lemmas~\ref{lem-01} and \ref{lem-01nd}.
\begin{lemma}						
Let $\beta \in (0,1)$.
For any $\bar x \in T(0,3)$ and $0<\rho \le r \le \frac12$, we have
\[
\phi(\bar x, \rho) \le C\left(\frac{\rho}{r}\right)^{\beta}\,r^{-n} \norm{u}_{L^1(B^{+}(\bar x, r))}+C \norm{u}_{L^\infty(B^{+}(\bar x,2r))}\,\tilde\omega_{\mathbf{A}}(2\rho) + C \tilde\omega_{\mathbf{g}}(2\rho),
\]
where $C=C(n, \lambda, \Lambda, p, \beta)$ are constants and  $\tilde\omega_\bullet(t)$ is a Dini function derived from $\omega_\bullet(t)$.
\end{lemma}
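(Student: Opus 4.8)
The plan is to follow the proof of Lemma~\ref{lem-01} essentially verbatim, replacing the constant-coefficient divergence-form weak type-$(1,1)$ bound by Lemma~\ref{lem-weak11-adj} and the interior--boundary regularity of the ``good part'' by Lemma~\ref{lem-bdry-Lip}. Fix $\bar x\in T(0,3)$ and $0<r\le\frac12$, so $B^{+}(\bar x,2r)\subset B^{+}_4$, and abbreviate $\bar{\mathbf A}=\bar{\mathbf A}_{B^{+}(\bar x,2r)}$, $\bar{\mathbf g}=\bar{\mathbf g}_{B^{+}(\bar x,2r)}$. The first step is to freeze the leading coefficients: from $D_{ij}(\bar a^{ij}u)=D_{ij}(a^{ij}u)+D_{ij}\bigl((\bar a^{ij}-a^{ij})u\bigr)$ one checks that $u$ is an adjoint solution of $D_{ij}(\bar a^{ij}u)=\dv^2\mathbf h$ in $B^{+}_4$, where $\mathbf h:=\mathbf g+(\bar{\mathbf A}-\mathbf A)u$ (entrywise $h^{ij}=g^{ij}+(\bar a^{ij}-a^{ij})u$). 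On $T(0,4)$ the exterior normal is $\nu=-\vec e_n$, so there $h^{nn}=g^{nn}+(\bar a^{nn}-a^{nn})u=a^{nn}u+(\bar a^{nn}-a^{nn})u=\bar a^{nn}u$, and hence the boundary relation $u=\frac{\mathbf g\nu\cdot\nu}{\mathbf A\nu\cdot\nu}$ is equivalent to $u=\frac{\mathbf h\nu\cdot\nu}{\bar{\mathbf A}\nu\cdot\nu}$ on $T(0,4)$; this is exactly the role played by carrying the term $\frac{\mathbf g\nu\cdot\nu}{\mathbf A\nu\cdot\nu}$ in the boundary data, and it is what makes the constant-coefficient reduction close.

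Next I would decompose $u=v+w$ on $\cD(\bar x,2r)$, where $w$ is the adjoint solution of
\[
D_{ij}(\bar a^{ij}w)=\dv^2(\mathbf h-\bar{\mathbf g})\;\text{ in }\;\cD(\bar x,2r),\qquad w=\frac{(\mathbf h-\bar{\mathbf g})\nu\cdot\nu}{\bar{\mathbf A}\nu\cdot\nu}\;\text{ on }\;\partial\cD(\bar x,2r)
\]
(all the data lie in $L^2$, since after the reductions of this subsection $\mathbf g$ and the coefficients are bounded and $u$ is bounded by the a priori assumption $u\in C(\overline B{}^+_3)$). Because $\dv^2\bar{\mathbf g}=0$, Lemma~\ref{lem-weak11-adj} applied after rescaling $\cD(\bar x,2r)$ to $\cD$ gives, for every $t>0$,
\[
\Abs{\set{x\in B^{+}(\bar x,r):\abs{w(x)}>t}}\lesssim\frac1t\int_{\cD(\bar x,2r)}\abs{\mathbf h-\bar{\mathbf g}}\lesssim\frac{r^{n}}{t}\Bigl(\omega_{\mathbf g}(2r)+\norm{u}_{L^\infty(B^{+}(\bar x,2r))}\,\omega_{\mathbf A}(2r)\Bigr),
\]
whence, for the fixed $p\in(0,1)$, the usual weak type-$(1,1)$-to-$L^p$ passage (cf.\ \cite[(2.11)]{DK16}) yields $\bigl(\fint_{B^{+}(\bar x,r)}\abs{w}^p\bigr)^{1/p}\lesssim\omega_{\mathbf g}(2r)+\norm{u}_{L^\infty(B^{+}(\bar x,2r))}\,\omega_{\mathbf A}(2r)$. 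The remainder $v=u-w$ is an adjoint solution of $D_{ij}(\bar a^{ij}v)=0$ in $B^{+}(\bar x,r)$ with $v\equiv\frac{\bar{\mathbf g}\nu\cdot\nu}{\bar{\mathbf A}\nu\cdot\nu}$, a \emph{constant}, on $T(\bar x,r)$; this is precisely the situation of Lemma~\ref{lem-bdry-Lip}, so $v$ is smooth up to $T(\bar x,r)$, $\bar a_{ij}D_{ij}v=0$ classically, and, by scaling, $\norm{Dv}_{L^\infty(B^{+}(\bar x,r/2))}\lesssim r^{-1}\bigl(\fint_{B^{+}(\bar x,r)}\abs{v-c}^p\bigr)^{1/p}$ for every $c\in\bR$.

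From here the proof is a transcription of that of Lemma~\ref{lem-01}. For $0<\kappa<\frac12$ the last estimate gives $\bigl(\fint_{B^{+}(\bar x,\kappa r)}\abs{v-v(\bar x)}^p\bigr)^{1/p}\le 2\kappa r\norm{Dv}_{L^\infty(B^{+}(\bar x,r/2))}\lesssim\kappa\bigl(\fint_{B^{+}(\bar x,r)}\abs{v-c}^p\bigr)^{1/p}$ for all $c$; taking $c=q_{\bar x,r}$, writing $u=v+w$, using subadditivity of $t\mapsto t^p$ ($p<1$) together with $\bigl(\fint_{B^{+}(\bar x,\kappa r)}\abs{w}^p\bigr)^{1/p}\le\kappa^{-n/p}\bigl(\fint_{B^{+}(\bar x,r)}\abs{w}^p\bigr)^{1/p}$ and the $L^p$ bound on $w$ yields
\[
\phi(\bar x,\kappa r)\le C_0\kappa\,\phi(\bar x,r)+C(\kappa^{-n/p}+1)\Bigl(\omega_{\mathbf g}(2r)+\norm{u}_{L^\infty(B^{+}(\bar x,2r))}\,\omega_{\mathbf A}(2r)\Bigr),
\]
with $C_0=C_0(n,\lambda,\Lambda,p)$. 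Choosing $\kappa$ so that $C_0\kappa\le\kappa^{\beta}$, iterating as in the derivation of \eqref{eq22.25f} (with the same $\tilde\omega_\bullet$ of \eqref{eq14.27w}), and interpolating between the scales $\kappa^j r$ exactly as in the passage to \eqref{eq0947f} --- bounding $\phi(\bar x,s)\le\bigl(\fint_{B^{+}(\bar x,s)}\abs u^p\bigr)^{1/p}\le\fint_{B^{+}(\bar x,s)}\abs u\lesssim r^{-n}\norm u_{L^1(B^{+}(\bar x,r))}$ for $\kappa r<s\le r$ by Jensen's inequality --- produces the claimed bound.

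The only genuinely new ingredients beyond Lemma~\ref{lem-01} are Lemmas~\ref{lem-weak11-adj} and \ref{lem-bdry-Lip}, which are already in hand, so the iteration itself should present no real difficulty. The step that needs care is the first one: checking that under the coefficient-freezing the Dirichlet datum $\frac{\mathbf g\nu\cdot\nu}{\mathbf A\nu\cdot\nu}$ turns into $\frac{\mathbf h\nu\cdot\nu}{\bar{\mathbf A}\nu\cdot\nu}$, which is what makes $v$ inherit \emph{constant} (not merely bounded) boundary values on $T(\bar x,r)$ and hence makes Lemma~\ref{lem-bdry-Lip} directly applicable.
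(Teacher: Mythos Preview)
Your proposal is correct and follows essentially the same route as the paper: the decomposition $u=v+w$ with $w$ solving the constant-coefficient adjoint problem with right-hand side $\dv^2\bigl((\mathbf g-\bar{\mathbf g})-(\mathbf A-\bar{\mathbf A})u\bigr)$ and matching boundary data, the weak type-$(1,1)$ bound from Lemma~\ref{lem-weak11-adj}, the Lipschitz estimate from Lemma~\ref{lem-bdry-Lip} for $v$, and the iteration via \eqref{eq22.25f}--\eqref{eq0947f} are all exactly what the paper does. Your explicit verification that freezing the coefficients converts the boundary datum $\frac{\mathbf g\nu\cdot\nu}{\mathbf A\nu\cdot\nu}$ into $\frac{\mathbf h\nu\cdot\nu}{\bar{\mathbf A}\nu\cdot\nu}$ (so that $v$ inherits \emph{constant} boundary values on $T(\bar x,r)$) is a point the paper leaves implicit, and it is the right thing to check.
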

\begin{proof}
For $\bar x \in T(0,3)$ and $0<r \le \frac12$, we decompose $u=v+w$, where $w \in L^2(B(\bar x, r)) $ is a unique adjoint solution of the problem
\begin{align*}
D_{ij}(\bar a^{ij}  w) &= -\dv^2 ((\mathbf{A}- \bar{\mathbf{A}}) u) + \dv^2(\mathbf{g}-\bar{\mathbf{g}}) \;\mbox{ in }\; \cD(\bar x, 2r),\\
w&=\frac{\left(\mathbf{g}-\bar{\mathbf{g}}-(\mathbf{A}- \bar{\mathbf{A}}) u\right)\nu\cdot\nu}{\bar{\mathbf{A}}\nu\cdot\nu} \;\mbox{ on }\;\partial \cD(\bar x, 2r).
\end{align*}
By Lemma~\ref{lem-weak11-adj} with scaling, we have for any $\alpha>0$,
\[
\abs{\set{x\in B^{+}(\bar x, r): \abs{w(x)} > \alpha}} \lesssim \frac{1}{\alpha}\left(\,\norm{ u}_{L^\infty(B^{+}(\bar x, 2r))} \int_{B^{+}(\bar x, 2r)} \abs{\mathbf{A}-\bar{\mathbf{A}}} +  \int_{B^{+}(\bar x, 2r)}  \abs{\mathbf{g}-\bar{\mathbf{g}}}\,\right).
\]
Therefore, we have
\[
\left(\fint_{B^{+}(\bar x, r)} \abs{w}^p \right)^{\frac1p} \lesssim \omega_{\mathbf{A}}(2r) \,\norm{u}_{L^\infty(B^{+}(\bar x, 2r))} +  \omega_{\mathbf{g}}(2r).
\]
Since $v=u-w$ satisfies
\[
D_{ij}(\bar a^{ij}  v)=\dv^2\bar{\mathbf{g}}\;\mbox{ in }\;B^{+}(\bar x, r),\quad v=\frac{\bar{\mathbf{g}}\nu\cdot\nu}{\bar{\mathbf{A}}\nu\cdot\nu}\;\mbox{ on }\;T(\bar x, r),
\]
by Lemma \ref{lem-bdry-Lip} with scaling, we have
\[
\norm{D v}_{L^\infty(B^{+}(\bar x, \frac12 r))}\lesssim r^{-1} \left(\fint_{B^{+}(\bar x,r)} \abs{v -q}^p\,\right)^{\frac1p},\quad \forall q \in \bR.
\]
Thus similar to \eqref{eq15.50b}, we have
\[
\left(\fint_{B^{+}(\bar x,\kappa r)} \abs{v - \bar {v}_{\bar x, \kappa r}}^p \right)^{\frac1p}
\le C_0 \kappa \left(\fint_{B^{+}(\bar x, r)} \abs{v -q}^p\, \right)^{\frac1p}, \quad \forall q \in \bR.
\]
By the same argument that led to \eqref{eq22.25f}, we find that there is $\kappa \in (0, \frac12)$ such that
\[
\phi(\bar x, \kappa^j r) \le\kappa^{j \beta}  \phi(\bar x, r) +C \norm{u}_{L^\infty(B^{+}(\bar x, 2r))}\,\tilde \omega_{\mathbf{A}}(2\kappa^{j} r) + C \tilde \omega_{\mathbf{g}}(2\kappa^{j} r),
\]
where $\tilde\omega_\bullet(t)$ is the same as in \eqref{eq14.27w}.
The rest of proof is the same as that of Lemma~\ref{lem-01}.
\end{proof}

By modifying the proof of Lemmas \ref{lem-02}, \ref{lem-03}, and \ref{lem-04} in a straightforward way, we obtain the following lemmas.

\begin{lemma}						
Let $\beta \in (0,1)$.
For any $x \in B^+_3$ and $0<\rho\le r \le \frac14$, we have
\[
\phi(x, \rho) \le  C\left(\frac{\rho}{r}\right)^{\beta}\, r^{-n} \norm{u}_{L^1(B(x ,3r)\cap B^+_4)}+C\norm{u}_{L^\infty(B(x,5r)\cap B^+_4)}\, \hat\omega_{\mathbf{A}}(\rho) + C \hat\omega_{\mathbf{g}}(\rho),
\]
where $C=C(n, \lambda, \Lambda,p, \beta)$ are constants and $\hat\omega_\bullet(t)$ is a Dini function derived from $\omega_\bullet(t)$.
\end{lemma}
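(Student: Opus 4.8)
The plan is to mimic the proof of Lemma~\ref{lem-02} almost word for word, making only three substitutions: $Du$ is replaced throughout by $u$; the interior $C^1$ iteration of \cite[Theorem~1.5]{DK16} is replaced by the interior iteration for adjoint solutions underlying \cite[Theorem~1.10]{DK16}; and Lemma~\ref{lem-01} is replaced by the boundary estimate for adjoint solutions proved just above. With $\bar x=(x^1,\ldots,x^{n-1},0)$ as before, I would split into the same three cases $\rho\le r\le x^n$, $x^n\le\rho\le r$, and $\rho\le x^n\le r$, according to the position of $x$ relative to the flat boundary $T(0,4)$.

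In the first case $B(x,r)\subset B^+_4$, so $\phi(x,\cdot)$ is exactly the purely interior quantity of \cite{DK16} for adjoint solutions; iterating the interior decay estimate and choosing the dilation factor $\kappa\in(0,\tfrac12)$ so small that the geometric factor does not exceed $\kappa^{\beta-1}$ yields, exactly as in \eqref{eq22.25fo}--\eqref{eq0947fo},
\[
\phi(x,\rho)\le C\Bigl(\tfrac{\rho}{r}\Bigr)^{\beta}r^{-n}\norm{u}_{L^1(B(x,r))}+C\norm{u}_{L^\infty(B(x,r))}\,\tilde\omega_{\mathbf A}(\rho)+C\tilde\omega_{\mathbf g}(\rho),
\]
where $\tilde\omega_\bullet$ is the Dini function from \eqref{eq14.27w} and the last step uses $\phi(x,\kappa^{-j}\rho)\le\kappa^{-n}\fint_{B(x,r)}\abs{u}$. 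In the second case $B(x,\rho)\cap B^+_4\subset B^{+}(\bar x,2\rho)\subset B^+_4$, so comparing $u$ with the competitor $q_{\bar x,2\rho}$ gives $\phi(x,\rho)\le 2^{n/p}\phi(\bar x,2\rho)$, and the boundary lemma above, together with $\abs{x-\bar x}=x^n\le r$, closes this case. The third, intermediate case is the one requiring the most care: with $R=x^n$ and $j$ chosen so that $\kappa^{j+1}<\rho/R\le\kappa^j$, one has $B(x,\kappa^{-j}\rho)\subset B(x,R)\subset B^+_4$ and $B(x,R)\subset B^{+}(\bar x,2R)$, hence $\phi(x,\kappa^{-j}\rho)\le 2^{(n-1)/p}\kappa^{-n/p}\phi(\bar x,2R)$; plugging this into the interior iteration and then applying the boundary lemma to $\phi(\bar x,2R)$ produces a term of the form $(\rho/R)^{\beta}\tilde\omega_\bullet(4R)$, which I would absorb by introducing $\omega^{\sharp}_\bullet(t):=\sup_{s\in[t,1]}(t/s)^{\beta}\tilde\omega_\bullet(s)$, exactly as in Lemma~\ref{lem-02}.

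Combining the three resulting bounds (after enlarging the balls on the right via $B(x,r)\subset B(x,3r)\cap B^+_4$ and $B^{+}(\bar x,4r)\subset B(x,5r)\cap B^+_4$) and setting $\hat\omega_\bullet(t):=\tilde\omega_\bullet(t)+\tilde\omega_\bullet(4t)+\omega^{\sharp}_\bullet(4t)$ gives the asserted inequality, and $\omega^{\sharp}_\bullet$ — hence $\hat\omega_\bullet$ — is a Dini function by \cite[Lemma~1]{Dong2012} together with the property \eqref{eq10.31tu} of $\tilde\omega_\bullet$ recorded in Remark~\ref{rmk2.33}, via the estimate \eqref{eq15.39m} already verified in the proof of Lemma~\ref{lem-02}. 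I do not expect any genuinely new obstacle: the only substantive inputs beyond \cite{DK16} are the constant-coefficient boundary Lipschitz estimate of Lemma~\ref{lem-bdry-Lip} and the weak type-$(1,1)$ bound of Lemma~\ref{lem-weak11-adj}, which were set up for precisely this purpose, and the single point to watch is keeping the dilation constant $\kappa$ identical in the interior and boundary iterations so that the geometric series defining $\tilde\omega_\bullet$ and $\omega^{\sharp}_\bullet$ remain consistent.
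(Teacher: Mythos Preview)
Your proposal is correct and matches exactly what the paper does: the paper simply states that this lemma follows ``by modifying the proof of Lemma~\ref{lem-02} in a straightforward way,'' and you have accurately spelled out that modification --- replacing $Du$ by $u$, invoking the interior iteration from \cite[Theorem~1.10]{DK16} in place of \cite[Theorem~1.5]{DK16}, and using the adjoint boundary lemma in place of Lemma~\ref{lem-01}. One minor simplification you have implicitly (and correctly) absorbed: since here the boundary quantity is the \emph{same} $\phi(\bar x,\cdot)$ (infimum over $q\in\bR$) rather than a separate auxiliary $\varphi$ as in the divergence case, the comparison steps \eqref{eq1448fr} and \eqref{eq20.09fr} become direct inequalities between values of $\phi$ itself, so the bookkeeping is even cleaner than in Lemma~\ref{lem-02}.
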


\begin{lemma}						
We have
\[
\norm{u}_{L^\infty(B^+_2)} \le C \norm{u}_{L^1(B^+_4)} + C\int_0^{1} \frac{\hat\omega_{\mathbf{g}}(t)}t \,dt,
\]
where $C>0$ is a constant depending only on $n$, $\lambda$, $\Lambda$, $p$, and $\omega_{\mathbf{A}}$.
\end{lemma}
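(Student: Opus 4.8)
The plan is to follow the proof of Lemma~\ref{lem-03} almost verbatim, with $Du$ replaced by the scalar quantity $u$, the vector-valued approximating constants $\vec q_{x,r}\in\bR^n$ replaced by the scalars $q_{x,r}\in\bR$, and the role of Lemma~\ref{lem-02} played by the preceding lemma (the adjoint analogue of Lemma~\ref{lem-02}). Throughout one works under the standing a priori assumption $u\in C(\overline B{}^+_3)$, the general case being recovered at the end by the usual approximation argument.

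First I would run the telescoping step. For $x\in B^+_3$ and $0<r\le\frac14$, averaging the elementary inequality $\abs{q_{x,r}-q_{x,\frac12 r}}^p\le\abs{u(y)-q_{x,r}}^p+\abs{u(y)-q_{x,\frac12 r}}^p$ over $y\in B(x,\frac12 r)\cap B^+_4$ and taking $p$th roots gives $\abs{q_{x,r}-q_{x,\frac12 r}}\lesssim\phi(x,r)+\phi(x,\frac12 r)$, and iterating yields $\abs{q_{x,2^{-k}r}-q_{x,r}}\lesssim\sum_{j=0}^{k}\phi(x,2^{-j}r)$. The preceding lemma forces $\phi(x,2^{-k}r)\to 0$, so since $u\in C(\overline B{}^+_3)$ one gets $q_{x,2^{-k}r}\to u(x)$. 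Letting $k\to\infty$ and summing the dyadic tails with the help of Remark~\ref{rmk2.33} (so that $\sum_{k}\hat\omega_\bullet(2^{-k}r)\approx\int_0^r\hat\omega_\bullet(t)\,t^{-1}\,dt$, using that $\hat\omega_\bullet$ satisfies \eqref{eq10.31tu}), I obtain
\[
\abs{u(x)-q_{x,r}}\lesssim r^{-n}\norm{u}_{L^1(B(x,3r)\cap B^+_4)}+\norm{u}_{L^\infty(B(x,5r)\cap B^+_4)}\int_0^r\frac{\hat\omega_{\mathbf{A}}(t)}{t}\,dt+\int_0^r\frac{\hat\omega_{\mathbf{g}}(t)}{t}\,dt.
\]
Combining this with the trivial bound $\abs{q_{x,r}}\lesssim\phi(x,r)+\bigl(\fint_{B(x,r)\cap B^+_4}\abs{u}^p\bigr)^{1/p}\lesssim r^{-n}\norm{u}_{L^1(B(x,r)\cap B^+_4)}$ gives the same estimate with $\abs{u(x)-q_{x,r}}$ replaced by $\abs{u(x)}$.

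Next I would carry out the absorption on dyadic shells exactly as in Lemma~\ref{lem-03}. Taking the supremum over $x\in B(x_0,r)\cap B^+_4$ for $x_0\in B^+_3$ and using that $\int_0^r\hat\omega_{\mathbf{A}}(t)\,t^{-1}\,dt\to 0$ as $r\to 0^+$ (the Dini property of $\hat\omega_{\mathbf{A}}$), fix $r_0<\frac14$ so that $C\int_0^{r_0}\hat\omega_{\mathbf{A}}(t)\,t^{-1}\,dt\le 3^{-n}$; then, with $r_k:=3-2^{1-k}$, one has $B(x_0,6r)\cap B^+_4\subset B^+_{r_{k+1}}$ whenever $x_0\in B^+_{r_k}$ and $r\le 2^{-k-3}$, which leads to
\[
\norm{u}_{L^\infty(B^+_{r_k})}\le 3^{-n}\norm{u}_{L^\infty(B^+_{r_{k+1}})}+C2^{kn}\norm{u}_{L^1(B^+_4)}+C\int_0^{1}\frac{\hat\omega_{\mathbf{g}}(t)}{t}\,dt
\]
for all sufficiently large $k$. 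Multiplying by $3^{-kn}$, summing over $k\ge k_0$, and using the a priori finiteness of $\norm{u}_{L^\infty(B^+_3)}$ (guaranteed by $u\in C(\overline B{}^+_3)$) to cancel the convergent tail on the right, one arrives at the asserted inequality.

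I do not expect a genuinely new obstacle here: this is the argument already executed for Lemma~\ref{lem-03}. The one point deserving a word of care is that an adjoint solution is a priori merely of class $L^p_{\loc}$, so the pointwise limit $q_{x,2^{-k}r}\to u(x)$ and the absorption of the $\norm{u}_{L^\infty}$ terms are legitimate only because of the qualitative hypothesis $u\in C(\overline B{}^+_3)$ made at the outset of this subsection — a hypothesis that is quantitatively harmless and is removed afterwards by approximating $u$ with smooth adjoint solutions.
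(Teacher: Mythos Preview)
Your proposal is correct and is precisely the approach the paper takes: it states that this lemma follows ``by modifying the proof of Lemma~\ref{lem-03} in a straightforward way,'' which is exactly the verbatim substitution of $u$ for $Du$, $q_{x,r}$ for $\vec q_{x,r}$, and the adjoint analogue of Lemma~\ref{lem-02} that you carry out. Your remark about the qualitative hypothesis $u\in C(\overline B{}^+_3)$ is also in line with the paper's standing assumption at the start of the subsection.
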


\begin{lemma}						
Let $\beta \in (0,1)$.
For any $x \in B^+_3$ and $0< r \le \frac15$, we have
\[
\abs{u(x)-q_{x,r}} \le Cr^{\beta}\,\norm{u}_{L^1(B(x,\frac35) \cap B^+_4)} + C\norm{u}_{L^\infty(B(x,1)\cap B^+_4)}\, \omega^*_{\mathbf{A}}(r) + C\omega_{\mathbf{g}}^*(r),
\]
where $C=C(n, \lambda, \Lambda,p, \beta)$ are constants and  $\omega^*_\bullet(t)$ is defined as in \eqref{eq10.46tu}.
\end{lemma}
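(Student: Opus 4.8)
The plan is to run the proof of Lemma~\ref{lem-04} essentially verbatim, with three cosmetic substitutions: the gradient $Du$ is replaced throughout by the scalar adjoint solution $u$, the reference vectors $\vec q_{x,r}\in\bR^n$ are replaced by scalars $q_{x,r}\in\bR$, and the classical interior/boundary estimates for harmonic functions are replaced by the boundary Lipschitz estimate of Lemma~\ref{lem-bdry-Lip}, which has already been absorbed into the first lemma of this subsection (the adjoint analogue of Lemma~\ref{lem-01nd}). To begin, I would set $\bar x=(x^1,\ldots,x^{n-1},0)$ and fix $\kappa\in(0,\tfrac12)$ to be the same constant appearing in the boundary iteration of that preceding lemma and in its interior counterpart (the adjoint analogue of \eqref{eq22.25fo}).

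The first substantive step is the telescoping bound. The preceding $\phi$-lemma of this subsection shows $\phi(x,\kappa^i r)\to0$ as $i\to\infty$; combined with the a priori hypothesis $u\in C(\overline B{}^+_3)$ this forces $q_{x,\kappa^i r}\to u(x)$, and then the computation behind \eqref{eq1426fr}--\eqref{eq1408th} gives
\[
\abs{u(x)-q_{x,r}}\le\sum_{i=0}^{\infty}\abs{q_{x,\kappa^i r}-q_{x,\kappa^{i+1}r}}\lesssim\sum_{i=0}^{\infty}\phi(x,\kappa^i r).
\]
It then remains to dominate $\sum_{i\ge0}\phi(x,\kappa^i r)$ by the right-hand side of the assertion, and I would split into the cases $r\le x^n$ and $r>x^n$ exactly as in Lemma~\ref{lem-04}. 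In the case $r\le x^n$ every ball $B(x,\kappa^i r)$ stays strictly interior, so the interior iteration for adjoint solutions together with the summation device of \cite[Lemma~2.7]{DK16} bounds $\sum_i\phi(x,\kappa^i r)$ by $\phi(x,r)$ plus the Dini tails $\int_0^r\tilde\omega_{\mathbf A}(t)\,t^{-1}dt$ and $\int_0^r\tilde\omega_{\mathbf g}(t)\,t^{-1}dt$, and the preceding $\phi$-lemma then controls $\phi(x,r)$ by $r^{\beta}\norm{u}_{L^1(B(x,\frac35)\cap B^+_4)}+\norm{u}_{L^\infty(B(x,1)\cap B^+_4)}\hat\omega_{\mathbf A}(r)+\hat\omega_{\mathbf g}(r)$.

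In the case $r>x^n$ I would choose the integer $i_0$ with $\kappa^{i_0+1}r\le x^n<\kappa^{i_0}r$, handle the tail $i\ge i_0+1$ by the interior iteration as above (reducing it to $\phi(x,\kappa^{i_0+1}r)$ plus Dini tails), and for $0\le i\le i_0$ use $\abs{x-\bar x}=x^n<\kappa^i r$ to get the inclusion $B(x,\kappa^i r)\cap B^+_4\subset B^{+}(\bar x,2\kappa^i r)$, whence $\phi(x,\kappa^i r)\le2^{n/p}\phi(\bar x,2\kappa^i r)$, and likewise $\phi(x,\kappa^{i_0+1}r)\lesssim\phi(\bar x,2\kappa^{i_0}r)$. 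Summing the boundary iteration of the preceding lemma over $0\le i\le i_0$ then gives $\sum_{i=0}^{i_0}\phi(\bar x,2\kappa^i r)\lesssim\phi(\bar x,2r)+\norm{u}_{L^\infty(B(x,1)\cap B^+_4)}\int_0^r\tilde\omega_{\mathbf A}(4t)\,t^{-1}dt+\int_0^r\tilde\omega_{\mathbf g}(4t)\,t^{-1}dt$, and the preceding $\phi$-lemma applied with center $\bar x$ (using $\abs{x-\bar x}\le\tfrac15$) bounds $\phi(\bar x,2r)$ by $r^{\beta}\norm{u}_{L^1(B(x,\frac35)\cap B^+_4)}+\norm{u}_{L^\infty(B(x,1)\cap B^+_4)}\tilde\omega_{\mathbf A}(4r)+\tilde\omega_{\mathbf g}(4r)$. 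Assembling the two cases and recalling that $\omega^*_\bullet$ in \eqref{eq10.46tu} was defined precisely so as to dominate $\hat\omega_\bullet(t)$, $\tilde\omega_\bullet(4t)$, $\int_0^t\tilde\omega_\bullet(s)\,s^{-1}ds$, and $\int_0^t\tilde\omega_\bullet(4s)\,s^{-1}ds$ simultaneously, one reads off the claimed inequality.

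I do not expect a serious obstacle here: all of the genuinely new analytic input -- namely that the constant-coefficient adjoint problem with boundary datum $\bar{\mathbf g}\nu\cdot\nu/\bar{\mathbf A}\nu\cdot\nu$ produces a function that is smooth up to the flat boundary and enjoys the interior-type gradient bound -- has already been isolated in Lemma~\ref{lem-bdry-Lip} and fed into the preceding lemmas of this subsection. The one point requiring care is the bookkeeping at the crossover scale $i_0$ in the case $r>x^n$: one must verify the ball inclusions $B(x,\kappa^i r)\subset B^{+}(\bar x,2\kappa^i r)$ for $i\le i_0$ and $B(x,\kappa^{i_0+1}r)\subset B^{+}(\bar x,2\kappa^{i_0}r)$ (valid since $x^n<\kappa^{i}r$ for $i\le i_0$ and $\kappa<\tfrac12$), so that the interior tail and the boundary head of the sum are glued together without losing the geometric factor $(\rho/r)^\beta$.
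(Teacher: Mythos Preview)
Your proposal is correct and follows exactly the approach the paper indicates: the paper does not write out a separate proof of this lemma but merely states that it is obtained ``by modifying the proof of Lemmas \ref{lem-02}, \ref{lem-03}, and \ref{lem-04} in a straightforward way,'' and your adaptation of Lemma~\ref{lem-04} --- replacing $Du$ by $u$, $\vec q_{x,r}$ by the scalar $q_{x,r}$, and the auxiliary quantity $\varphi$ by $\phi$ itself (since in the adjoint setting no special boundary subspace is needed) --- is precisely that straightforward modification. The case split at $x^n$, the crossover index $i_0$, the ball inclusions, and the final packaging via $\omega^*_\bullet$ all mirror Lemma~\ref{lem-04} faithfully.
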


With the above lemmas at hand, we obtain, similar to \eqref{eq08.33st}, the following estimates for $x, y \in B^+_1$:
\begin{multline*}					
\abs{u(x)-u(y)} \le  C\norm{u}_{L^1(B^+_2)}\,\abs{x-y}^\beta \\
+C\left( \norm{u}_{L^1(B^+_4)} + \int_0^1 \frac{\hat\omega_{\mathbf{g}}(t)}t\,dt \right)  \omega^*_{\mathbf{A}}(\abs{x-y})+ C\omega^*_{\mathbf{g}}(\abs{x-y}),
\end{multline*}
where 
$\beta \in (0,1)$ is any given number, $C=C(n, \lambda, \Lambda, p, \omega_{\mathbf{A}}, \beta)$, and $\omega^*_\bullet(t)$ is defined as in \eqref{eq10.46tu}.
We have shown that $u \in C(\bar B^+_1)$ as desired.
This completes the proof of Proposition~\ref{prop01adj} and that of Theorem~\ref{thm-main-adj}.
\qed

\section{Weak type-$(1,1)$ estimates }				\label{sec3}

\subsection{Proof of Theorem~\ref{thm-weak11-d}}
We modify the proof of \cite[Theorem~3.2]{DK16}.
Since the map $T: \vec f \mapsto Du$ is a bounded linear operator in $L^2(\Omega)$, it suffices to show that $T$ satisfies the hypothesis of Lemma~\ref{lem01-stein}.

Set $c=8$.
For fixed $x_0 \in \overline{\Omega}$ and $0<r< \frac12 \diam \Omega$, let $\vec b \in L^2(\Omega)$ be supported in $\Omega(x_0, r)$ and satisfy $\int_\Omega \vec b=0$.
Let $u \in W^{1,2}_0(\Omega)$ be the unique weak solution of
\[
\sum_{i,j=1}^n D_{i}(a^{ij} D_j u)= \dv \vec b\;\mbox{ in }\; \Omega;\quad
u=0 \;\mbox{ on } \; \partial \Omega.
\]
For any $R\ge 8r$ such that $\Omega\setminus B(x_0, R) \neq \emptyset$ and $\vec g \in C^\infty_c(\Omega(x_0, 2R)\setminus B(x_0,R))$,
let  $v \in W^{1,2}_0(\Omega)$ be a weak solution of an adjoint problem
\[
\sum_{i,j=1}^n D_i(a^{ji} D_j v)= \dv \vec g\;\mbox{ in }\; \Omega;\quad
v=0 \;\mbox{ on } \; \partial \Omega.
\]
Then, we have the identity
\begin{equation}
                            \label{eq9.24}
\int_{\Omega} Du \cdot \vec g = \int_{\Omega} \vec b \cdot Dv = \int_{\Omega(x_0 ,r)} \vec b \cdot \left(Dv-\overline{Dv}_{\Omega(x_0 ,r)} \right).	
\end{equation}
Since $\vec g=0$ in $\Omega(x_0, R)$, by flattening the boundary and using a similar argument that led to \eqref{eq08.33st}, we get
\[
\abs{Dv(x)-Dv(y)}  \lesssim\left(\left(\frac{\abs{x-y}}{R}\right)^\beta + \omega_{\mathbf{A}}^*(\abs{x-y}) \right) R^{-\frac{d}2} \norm{Dv}_{L^2(\Omega(x_0, \frac14 R))}
\]
for $x, y \in \Omega(x_0, \frac18 R)$.
Since $r\le R/8$, we thus have
\begin{align}
\norm{Dv -\overline{Dv}_{\Omega(x_0, r)}}_{L^\infty(\Omega(x_0, r))} &\lesssim R^{-\frac{d}2} \norm{Dv}_{L^2(\Omega)} \left( r^\beta R^{-\beta} + \sup_{0<t\le r\omega^*_{\mathbf{A}}(t)}\right)\label{eq15.51w}.
\end{align}
Using \eqref{eq15.08m} and \eqref{eq10.46tu}, it is routine to check that
\[
\tilde \omega_{\mathbf{A}}(r)\lesssim ( \ln r )^{-2},\quad \hat \omega_{\mathbf{A}}(r)\lesssim ( \ln r )^{-2},\quad \omega_{\mathbf{A}}^*(r)\lesssim ( \ln (4/r) )^{-1}, \quad\forall r\in(0,\tfrac12).
\]
See, for instance, \cite[Lemma~3.4]{DK16}.
By the construction of $v$, we have
\[
\norm{Dv}_{L^2(\Omega)} \lesssim \norm{\vec g}_{L^2(\Omega)}=\norm{\vec g}_{L^2(\Omega(x_0,2R)\setminus B(x_0,R))}.
\]
Therefore, we have by \eqref{eq9.24} and \eqref{eq15.51w} that
\begin{multline*}
\Abs{\int_{\Omega(x_0,2R)\setminus B(x_0,R)} Du \cdot \vec g\,}\\
\lesssim  \norm{\vec b}_{L^1(\Omega(x_0, r))}\left(r^\beta R^{-\beta} +\{\ln (4/r)\}^{-1}\right) R^{-\frac{d}2}  \norm{\vec g}_{L^2(\Omega(x_0,2R)\setminus B(x_0,R))}.
\end{multline*}
and thus, by duality and H\"older's inequality, we get
\[
\norm{Du}_{L^1(\Omega(x_0,2R)\setminus B(x_0,R))} \lesssim \left(r^\beta R^{-\beta} +\{\ln (4/r)\}^{-1}\right)\,\norm{\vec b}_{L^1(\Omega(x_0, r))}.
\]
Let $N$ be the smallest positive integer such that $\Omega \subset B(x_0, 2^{N +3}r)$. Clearly, $N \lesssim \ln(1/r)$. By taking $R=2^3r,2^4r,\ldots,2^{N+2} r$, we have
\[
\int_{\Omega \setminus B(x_0, 8r)} \abs{D u} \lesssim \sum_{k=1}^N \left(2^{-\beta k} +\{\ln (4/r)\}^{-1}\right)\ \norm{\vec b}_{L^1(B(x_0, r))} \sim \int_{B(x_0, r)} \abs{\vec b},
\]
and thus we are done.
\qed

\subsection{Proof of Theorem~\ref{thm-weak11-nd}}
For simplicity of argument, we may assume that $\Omega$ is contained in $B_5=B(0,5)$ and $\mathbf{A}$ has Dini mean oscillation on $B_{10}=B(0,10)$.

Let $W$ be be the adjoint solution to the problem
\begin{equation*}			
D_{ij}(a^{ij} W) = 0 \;\mbox{ in }\;B_{10},\quad W=1 \;\mbox{ on }\;\partial B_{10}.
\end{equation*}
It is known that $W$ is a nonnegative Muckenhoupt weight in the reverse H\"older class $\mathcal B_{\frac{n}{n-1}}(B_{10})$, with constants which depend only on $n$, $\lambda$, and $\Lambda$; i.e.,
\begin{equation}			\label{doubling}
W(B(x_0, 2r)) \lesssim W(B(x_0,r)), \quad \left( W(B(x_0,r)) := \int_{B(x_0,r)} W \,dx\right),
\end{equation}
\begin{equation}\label{E: Muckenhoupt}
\left(\fint_{B(x_0,r)} W^{\frac{n}{n-1}}\, dx\right)^{\frac{n-1}{n}} \lesssim \fint_{B(x_0,r)}W\, dx,
\end{equation}
whenever $B(x_0, 2r) \subset B_{10}$; see \cite{FS1984}. Also, $W(B_{10})\approx 1$; see \cite{Es94,FGMS1988}.
\begin{lemma}				\label{L:1}
For $x_0 \in B_1$ and $0<r \le 1$, we have
\begin{equation}		\label{eq10.53f}
\sup_{B(x_0, r)} W \le C \fint_{B(x_0, r)} W,
\end{equation}
where $C$ depends only on $n, \lambda, \Lambda$, and $\omega_{\mathbf{A}}$.
\end{lemma}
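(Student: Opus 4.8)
The estimate \eqref{eq10.53f} is a scale–invariant local $L^\infty$ bound for the nonnegative adjoint solution $W$ of the \emph{homogeneous} equation $D_{ij}(a^{ij}W)=0$ on $B_{10}$, and the plan is to combine the interior regularity for adjoint solutions with coefficients of Dini mean oscillation (established, in its boundary form, in Section~\ref{sec2}; see also \cite{DK16}) with the doubling property \eqref{doubling} that is already available for $W$.

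First I would record the interior $L^\infty$--$L^1$ estimate: if $w$ is a local adjoint solution of $D_{ij}(a^{ij}w)=0$ in a ball $B(z,2\rho)\subset B_{10}$, then
\begin{equation*}
\sup_{B(z,\rho)}\abs{w}\le C\,\fint_{B(z,2\rho)}\abs{w},\qquad C=C(n,\lambda,\Lambda,\omega_{\mathbf{A}}).
\end{equation*}
This is the interior counterpart of the bound $\norm{u}_{L^\infty(B^+_2)}\le C\norm{u}_{L^1(B^+_4)}+C\int_0^1\hat\omega_{\mathbf{g}}(t)/t\,dt$ obtained in the proof of Proposition~\ref{prop01adj}, specialized to $\mathbf{g}=0$; the passage from the unit scale to a general ball $B(z,2\rho)$ is by the dilation $w_\rho(x):=w(z+\rho x)$, which again solves a homogeneous adjoint equation and whose coefficients $\tilde{\mathbf{A}}(x)=\mathbf{A}(z+\rho x)$ satisfy $\omega_{\tilde{\mathbf{A}}}(t)\le\omega_{\mathbf{A}}(\rho t)$, so that $\int_0^1\omega_{\tilde{\mathbf{A}}}(t)/t\,dt\le\int_0^1\omega_{\mathbf{A}}(t)/t\,dt$ for $\rho\le 1$ and the constant stays uniform. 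Since $W\ge 0$, the absolute values are harmless below, and $W$, being a nonnegative adjoint solution of the homogeneous equation, is in particular a local adjoint solution on $B_{10}$ to which the above applies.

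Then, given $x_0\in B_1$ and $0<r\le 1$, note $B(x_0,2r)\subset B(0,3)\subset B_{10}$, so the displayed estimate with $z=x_0$, $\rho=r$ gives $\sup_{B(x_0,r)}W\le C\fint_{B(x_0,2r)}W$. Finally, \eqref{doubling} (valid since $B(x_0,2r)\subset B_{10}$) gives $W(B(x_0,2r))\le C\,W(B(x_0,r))$, hence $\fint_{B(x_0,2r)}W\le 2^{-n}C\,\fint_{B(x_0,r)}W$; combining the two displays yields \eqref{eq10.53f} with $C=C(n,\lambda,\Lambda,\omega_{\mathbf{A}})$.

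The only substantive ingredient is the interior $L^\infty$ bound for homogeneous adjoint solutions with coefficients of Dini mean oscillation, which is precisely the regularity theory developed above; the remainder is dilation bookkeeping together with the doubling property. The one point requiring care is that the constant in that interior bound must depend on $\mathbf{A}$ only through the Dini character of $\omega_{\mathbf{A}}$, and not on the particular ball, so that it survives the rescaling — but this is clear from the form of the estimate, since $\omega_{\tilde{\mathbf{A}}}$ inherits the Dini bound of $\omega_{\mathbf{A}}$.
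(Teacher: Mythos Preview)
Your proposal is correct and follows essentially the same approach as the paper: both combine the interior $L^\infty$--$L^1$ regularity for homogeneous adjoint solutions with coefficients of Dini mean oscillation (from \cite{DK16}) with the doubling property \eqref{doubling} of $W$. The paper writes out the oscillation estimate $\abs{W(x)-W(y)}\lesssim r^{-n}\norm{W}_{L^1(B(x_0,4r))}\bigl(r^{-\beta}\abs{x-y}^\beta+\int_0^{\abs{x-y}}\tilde\omega_{\mathbf A}(t)/t\,dt\bigr)$ explicitly and then averages over $y$, whereas you invoke the resulting $L^\infty$--$L^1$ bound as a black box and pass through scaling; these are the same argument in slightly different packaging.
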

\begin{proof}
In the proof of \cite[Theorem~1.10]{DK16}, it is shown that for $x,y \in B(x_0,r)$
\[
\abs{W(x)-W(y)} \lesssim r^{-n} \norm{W}_{L^1(B(x_0, 4r))}\left( r^{-\beta} \abs{x-y}^\beta + \int_0^{\abs{x-y}} \frac{\tilde \omega_{\mathbf{A}}(t)}{t}\,dt \right),
\]
where $\tilde \omega_{\mathbf{A}}$ is as in \eqref{eq14.27w}.
Therefore, we have
\[
W(x) \lesssim r^{-n} \norm{W}_{L^1(B(x_0, 4r))}\left( r^{-\beta} \abs{x-y}^\beta + \int_0^{\abs{x-y}} \frac{\tilde \omega_{\mathbf{A}}(t)}{t}\,dt \right) + W(y).
\]
By averaging over $y \in B(x_0, r)$,
\[
W(x) \lesssim \left(1 + \int_0^{2r} \frac{\tilde \omega_{\mathbf{A}}(t)}{t}\,dt \right) \fint_{B(x_0, 4r)} W+ \fint_{B(x_0, r)} W.
\]
By using the doubling property \eqref{doubling} of $W$, we get \eqref{eq10.53f}.
\end{proof}

\begin{definition}			
We say that $\tilde v$ is a normalized adjoint solution (for the operator $a^{ij}D_{ij}$) in an open subset $\Omega'$ of $\Omega$ if $\tilde{v}$ is a continuous function defined in $\Omega'$ such that $\tilde{v} W$ is an adjoint solution in $\Omega'$, i.e., $D_{ij}(a^{ij}\tilde{v} W)=0$ in $\Omega'$.
\end{definition}
We record the following property of normalized adjoint solutions $\tilde{v}$ on $B(x_0,r)\subset \Omega$:
There are constants depending only on $\lambda, \Lambda$, and $n$ such that the following holds:
\begin{equation}\label{11.11thr}
\norm{\tilde v}_{L^\infty(B(x_0,\frac r2))} \le \frac{C}{W(B(x_0,r))} \int_{B(x_0,r)} \abs{\tilde v} W\,dx,
\end{equation}
\begin{equation*}
[\tilde v]_{C^{\alpha}(B(x_0,r))}  \le Cr^{-\alpha} \norm{\tilde v}_{L^\infty(B(x_0,2r))}.
\end{equation*}
See \cite{Ba84,Es94}.
There is also a boundary version of the above estimates.
Namely, if $\tilde v$ is a normalized adjoint solution in $\Omega(x_0, r)$ with $x_0\in \overline{\Omega}$ and $\tilde v=0$ on $B(x_0,r)\cap \partial\Omega$, then
\begin{equation}
						\label{eq14.16tu}
\norm{\tilde v}_{L^\infty(\Omega(x_0,\frac r2))} \le \frac{C}{W(B(x_0, r))} \int_{\Omega(x_0,r)} \abs{\tilde v} W
\end{equation}
and
\begin{equation}				\label{eq14.17tu}
[\tilde v]_{C^{\alpha}(B(x_0,r))} \le Cr^{-\alpha} \norm{\tilde v}_{L^\infty(\Omega(x_0,2r))}.
\end{equation}
We note that the constants $C$ and $0<\alpha\le 1$ in the above estimates depend only on $n$, $\lambda$, and $\Lambda$; see \cite{Ba84,FGMS1988}.

We are now ready to prove the theorem.
We shall make first the qualitative assumption that the coefficients $\mathbf{A}=(a^{ij})$ are smooth.
However, the constant $C$ that appears in \eqref{eq15.08wh} will not depend on the extra smoothness of the coefficients.
Let $\set{Q_l}$ be a collection of disjoint \lq\lq cubes\rq\rq\  as those used in the proof of Lemma \ref{lem01-stein} so that we have
\begin{equation}				\label{eq12.10frd}
t<\fint_{Q_l}\, \abs{f}  \le A_1 t
\end{equation}
and $\abs{f(x)} \le t$ for a.e. $x \in \Omega \setminus \textstyle \bigcup_l Q_l$.
We decompose $f=g+b$, with $b=\sum_l b_l$, such that
\[
g= \tilde m_l(f):=\frac{1}{W(Q_l)} \int_{Q_l} f W \;\text{ on }\;Q_l,
\]
$g=f$ on $\Omega\setminus \bigcup_l Q_l$, and set
\[
b_l= \chi_{Q_l}\left(f - \tilde m_l(f) \right).
\]
It is obvious that
\begin{equation}		\label{eq11.16f}
\int_{\Omega} b_l W=\int_{Q_l} b_l W=0.
\end{equation}
By \eqref{eq10.53f} we find (a ball $B(x_0, r)$ can be easily replaced by a ``cube'')
\[
\Abs{\tilde m_l(f)} \le \frac{1}{W(Q_l)}\, \norm{W}_{L^\infty(Q_l)} \int_{Q_l} \abs{f}\ \lesssim \fint_{Q_l} \abs{f}
\]
and thus, we have
\begin{equation}		\label{eq11.18f}
\int_{Q_l} \abs{b_l} \,dx \le \int_{Q_l} \abs{f} + \abs{Q_l}\,\tilde m_l(f) \lesssim \int_{Q_l} \abs{f} \lesssim  t \abs{Q_l}.
\end{equation}
Also, we find that (c.f. \eqref{a11.17f} in the Appendix)
\[
\abs{g(x)} \lesssim t,\quad\text{for a.e. }x \in \Omega.
\]
Now, we write $u=v+w$, where $v \in W^{1,2}_0(B_1)\cap W^{2,2}(B_1)$ is a unique solution to
\[
a^{ij}D_{ij} v= g\;\mbox{ in }\; \Omega,\quad
v=0 \;\mbox{ on } \; \partial \Omega.
\]
By the standard elliptic theory, we have
\[
\norm{D^2 v}_{L^2(\Omega)} \lesssim \norm{g}_{L^2(\Omega)}
\]
and thus, we have (c.f. \eqref{a10.31m} in the Appendix)
\begin{align}
					\nonumber
\Abs{\set{x \in \Omega : \abs{D^2 v(x)} > \tfrac12 t}} &\lesssim \frac{1}{t^2} \int_{\Omega} \abs{D^2 v}^2 \lesssim \frac{1}{t^2} \int_{\Omega} \abs{g}^2\ \\
					\label{eq10.31m}
&\lesssim \frac{1}{t} \int_{\Omega \setminus \bigcup_l Q_l} \abs{f} + \sum_l \,\abs{Q_l} \lesssim \frac1t \int_{\Omega} \abs{f}.
\end{align}
For each $l=1,2, \cdots$, let $w_l \in W^{1,2}_0(\Omega)\cap W^{2,2}(\Omega)$ be the unique solution to
\begin{equation}			\label{eq14.28frd}
a^{ij}D_{ij} w_l= b_l\;\mbox{ in }\; \Omega,\quad
w_l=0 \;\mbox{ on } \; \partial \Omega.
\end{equation}
Take $c=8$. We associate each $Q_l$ with a ball $B_l=B(x_l, r_l)$ as in the proof of Lemma~\ref{lem01-stein}, and denote $B_l^*=B(x_l, 8r_l)$.
Since $w =\sum_l w_l$, we have
\[
\int_{\Omega \setminus \bigcup_l B_l^*} \, \abs{D^2 w} \le \sum_l \int_{\Omega \setminus  B_l^*}\, \abs{D^2 w_l}.
\]
We claim that
\begin{equation}			\label{eq14.15f}
\int_{\Omega \setminus B_l^*} \, \abs{D^2 w_l} \lesssim \int_{Q_l} \, \abs{b_l}.
\end{equation}
Take the claim for now.
Then, by \eqref{eq11.18f} and \eqref{eq12.10frd}, we get
\[
\int_{\Omega \setminus \bigcup_l B_l^*} \,\abs{D^2 w}  \lesssim \sum_l \int_{Q_l} \, \abs{b_l} \lesssim t \sum_l \,\abs{Q_l} \lesssim \int_{\Omega}\, \abs{f},
\]
which shows that
\[
\Abs{\set{x \in \Omega: \abs{D^2 w(x)} > \tfrac12 t} \setminus \textstyle\bigcup_l Q_l^*} \lesssim \frac1t \int_{\Omega}\, \abs{f}.
\]
However,
\[
\Abs{\textstyle \bigcup_j B_l^*} \le 8^n \sum_l\, \abs{B_l} \lesssim \sum_l \,\abs{Q_l} \lesssim \frac1t \int_{\Omega}\, \abs{f}.
\]
Together then, the last two estimates imply
\[
\Abs{\set{x \in \Omega: \abs{D^2 w(x)} > \tfrac12 t}} \lesssim \frac1t \int_{\Omega} \, \abs{f},
\]
which combined with \eqref{eq10.31m} gives the theorem since $u=v+w$.

We now prove the claim \eqref{eq14.15f}.
To do this, we follow the same line of  proof of \cite[Lemma~2.20]{DK16}.
Recall that $w_l \in W^{1,2}_0(\Omega)\cap W^{2,2}(\Omega)$ satisfies \eqref{eq14.28frd} with $b_l$ supported in $Q_l\subset B_l\cap \Omega = \Omega(x_l, r_l)$.
For any $R\ge 8r_l$ such that $\Omega \setminus B(x_l, R) \neq \emptyset$ and smooth functions $\mathbf{h}=(h^{km})_{k,m=1}^n$ with a compact support in $\Omega(x_l, 2R)\setminus \overline B(x_l,R)$, let $v_l \in L^2(\Omega)$ be a unique adjoint solution of
\[
D_{ij}(a^{ij} v_l)=  \dv^2 \mathbf{h}\;\mbox{ in }\; \Omega;\quad
v_l=0 \;\mbox{ on } \; \partial \Omega,
\]
and let
\[
\tilde v_l:= v_l/W.
\]
Then, we have the identity
\begin{equation}				\label{eq22.44f}
\int_{\Omega} \tr (D^2 w_l \, \mathbf{h}) = \int_{\Omega}  v_l b_l = \int_{\Omega} \tilde v_l W b_l  = \int_{Q_l} \left(\tilde v_l-(\tilde v_l)_{x_l,r_l} \right) W b_l,
\end{equation}
where we set
\[
(\tilde v_l)_{x_l,r_l}:=\fint_{\Omega(x_l,r_l)}\, \tilde v_l
\] and used \eqref{eq11.16f}.
Since $\mathbf{h}=0$ in $\Omega(x_l,R)$, we find that $\tilde v_l$ is a normalized adjoint solution in $\Omega(x_l, R)$.
Thus, by \eqref{eq14.16tu} and \eqref{eq14.17tu}, for $x \in Q_l$, we have
\begin{align*}
\Abs{\tilde v_l(x) - (\tilde v_l)_{x_l,r_l}} & \le \fint_{\Omega(x_l,r_l)} \abs{\tilde v_l(x) - \tilde v_l(y)}\, dy \lesssim \left(\frac{r_l}{R}\right)^\alpha \norm{\tilde v_l}_{L^\infty(\Omega(x_l, \frac14 R))}\\
&\lesssim \left(\frac{r_l}{R}\right)^\alpha  \frac{1}{W(B(x_l, \frac12 R))} \int_{\Omega(x_l, R)} \,\abs{\tilde v_l} W
= \left(\frac{r_l}{R}\right)^\alpha  \frac{1}{W(B(x_l, \frac12 R))} \int_{\Omega(x_l, R)}\, \abs{v_l}.
\end{align*}
Therefore, by \eqref{eq22.44f}, \eqref{eq10.53f}, and the doubling property of $W$, we have
\begin{align*}
\Abs{\int_{\Omega(x_l,2R)\setminus B(x_l,R)}  \tr (D^2 w_l \, \mathbf{h}) }
&\lesssim \left(\frac{r_l}{R}\right)^\alpha  \frac{1}{W(B(x_l,\frac12 R))}  \norm{v_l}_{L^1(\Omega(x_l, R))} \,\norm{W}_{L^\infty(B(x_l, \frac14 R))}\, \norm{b_l}_{L^1(Q_l)}\\
&\lesssim \left(\frac{r_l}{R}\right)^\alpha  \frac{W(B(x_l, \frac14 R))}{W(B(x_l, \frac12 R))\, \abs{B(x_l, \frac14 R)}}\, \norm{v_l}_{L^1(\Omega(x_l, R))}\,\norm{b_l}_{L^1(Q_l)}\\
&\lesssim r_l^\alpha  R^{-\alpha -\frac{n}2}\,\norm{b_l}_{L^1(Q_l)} \,\norm{v_l}_{L^2(\Omega)}\\
&\lesssim r_l^\alpha  R^{-\alpha-\frac{n}2}\,\norm{b_l}_{L^1(Q_l)} \,\norm{\mathbf h}_{L^2(\Omega(x_l,2R)\setminus B(x_l,R))},
\end{align*}
where we used H\"older's inequality and the estimate
\[
\norm{v_l}_{L^2(\Omega)} \lesssim \norm{\mathbf h}_{L^2(\Omega)}
\]
in the last two inequalities.
Therefore, by duality and H\"older's inequality, we get
\[
\norm{D^2 w_l}_{L^1(\Omega(x_l,2R)\setminus B(x_l,R))} \lesssim  r_l^\alpha R^{-\alpha} \,\norm{b_l}_{L^1(Q_l)}.
\]
Now let $N$ be the smallest positive integer such that $\Omega\subset B(x_l, 2^{N} 8r_l)$.
By taking $R=2^3r_l,2^4r_l,\ldots,2^{N+2}r_l$ in the above, we have
\[
\int_{\Omega \setminus B(x_l, 8r_l)} \abs{D^2 w_l} \lesssim \sum_{k=1}^{N} 2^{-\alpha k} \norm{b_l}_{L^1(Q_l)} \simeq \int_{Q_l} \abs{b_l}
\]
as desired.

Finally, we shall show how to get rid of extra smoothness assumption on the coefficients $\mathbf{A}=(a^{ij})$.
Let $\mathbf{A}_k=(a^{ij}_k)$ and $f_k$ are smooth functions such that $a^{ij}_k$ converges uniformly to $a^{ij}$ in $\Omega$ and $f_k$ converges to $f$ in $L^2(\Omega)$ as $k \to \infty$.
We may further assume that $\mathbf{A}_k$ satisfies condition \eqref{ellipticity-nd} and that $\omega_{\mathbf{A}_k} \le \omega_{\mathbf{A}}$.
Let $u_k \in W^{1,2}_0(\Omega)\cap W^{2,2}(\Omega)$ satisfy
\[
a^{ij}_k D_{ij} u_k= f_k\;\mbox{ in }\; \Omega,\quad
u_k=0 \;\mbox{ on } \; \partial \Omega.
\]
Then, by \eqref{eq15.08wh}, we have an estimate
\begin{equation}				\label{eq15.59wh}
\Abs{\set{x \in \Omega : \abs{D^2u_k(x)} > t}}  \le \frac{C}{t} \int_{\Omega}\, \abs{f_k}\,dx,
\end{equation}
which is uniform for all $k=1,2,\cdots$.
Note that
\[
a^{ij}_k D_{ij} (u-u_k) = f-f_k + (a^{ij}_k-a^{ij}) D_{ij} u
\]
so that, by the $W^{2,2}$ theory, we have
\[
\norm{D^2 u - D^2 u_k}_{L^2(\Omega)} \le C \left( \norm{f-f_k}_{L^2(\Omega)}+ \norm{(a^{ij}_k-a^{ij})D_{ij}u}_{L^2(\Omega)} \right).
\]
In particular, we find that $D^2 u_k \to D^2 u$ in $L^2(\Omega)$.
Therefore, by taking limit $k\to \infty$ in \eqref{eq15.59wh}, we get \eqref{eq15.08wh}.
\qed
\begin{remark}\label{R: 1} It is shown in \cite{Es94} that when the coefficients of the elliptic operator $\mathcal L$ in non-divergence form are VMO functions in $B_{10}$, i.e.,
\begin{equation}			\label{E:20}
\lim_{r\to 0 +} \sup_{x\in B_{10}}\fint_{Q(x,r)} \Abs{\mathbf{A}-\bar{\mathbf{A}}_{Q(x,r)}} =0,
\end{equation}
where $Q(x,r)$ denotes a cube in $\bR^n$ centered at $x$ with edges of length $r$ and sides parallel to the coordinate axis, the Muckenhoupt property \eqref{E: Muckenhoupt} of the weight $W$ improves because then, $\log W$ is in $\text{VMO}(B_5)$ (see \cite[Theorem 1.2]{Es94}), i.e.,
\begin{equation}\label{E: estaenvmo}
\lim_{r\to 0+} \sup_{x\in B_5}\fint_{Q(x,r)} \Abs{\log{W}-\overline{\log{W}}_{Q(x,r)}} =0,
\end{equation}
while the weight $W$ is shown to be unbounded or to vanish inside $B_{10}$ for some of these operators; see \cite[\S 3]{Es94}. In fact, when $\mathbf{A}$ has Dini mean oscillation in $B_{10}$, the ideas behind \cite[Theorem 1.10]{DK16} and \cite[Lemma 3]{EM2016} imply that nonnegative adjoint solutions $W$ verify the following Harnack inequality: there is $C=C(\lambda, \Lambda, n, \omega_\mathbf{A})$ such that
\[
\sup_{B(x,r)}W\le C\inf_{B(x,r)}W,
\]
when $B(x,2r)\subset B_8$.
See Lemma \ref{lem8.06} in the Appendix.
This local Harnack inequality fails when $\mathbf{A}$ is continuous on $B_{10}$; see \cite[\S 3]{Es94}.

It is shown in  \cite[Theorem 1.3] {Es94} that under the same hypothesis, the solution $u$ to \eqref{E: nodivergenete} verifies an interior weak type-$(1,1)$ property with respect to the weight $W\,dx$, when $f\in C_c^\infty(\Omega)$.
Also, \cite[\S 3]{Es94} provides counterexamples of operators $\mathcal L$ in non-divergence form with continuous coefficients for which the weak type-$(1,1)$ \eqref{eq15.08wh} fails in the interior of $\Omega$.
Notice that the coefficient matrices there just fail to be Dini continuous or have Dini mean oscillations.

Finally, the methods in this paper, \eqref{doubling}, \eqref{11.11thr} -- \eqref{eq14.17tu} and \eqref{E: estaenvmo} are easily seen to help to extend up to the boundary the interior weak type-$(1,1)$ property with respect to the weight $W\,dx$ in \cite[Theorem 1.3] {Es94}.
In particular, under the weaker VMO condition \eqref{E:20} one can show that there is $C=C(n, \lambda, \Lambda, \Omega,\mathbf{A})$ such that for any $t>0$,
\begin{equation*}
W\left(\{x \in \Omega : \abs{D^2u(x)} > t\}\right)  \le \frac{C}{t} \int_{\Omega}\, \abs{f} W\,dx,
\end{equation*}
when $u$ is the solution to \eqref{E: nodivergenete}, $\Omega\subset B_5$, and $f\in C^\infty(\overline{\Omega})$.
Observe there are operators with uniformly continuous coefficients such that the adjoint solution $W$ is unbounded above or below or it is not a local $A_1$ Muckenhoupt weight (See \cite{Ba84b} and \cite[\S 3]{Es94}).
\end{remark}

\subsection{Proof of Lemma~\ref{lem-cont-bdry-adj}}

As in the proof of Theorem~\ref{thm-weak11-nd}, let us assume that $\Omega$ is contained in $B_5=B(0,5)$ and $\mathbf{A}$ has Dini mean oscillation on $B_{10}$.
Let $W$ be as given in the proof of Theorem~\ref{thm-weak11-nd}.
By \cite[Theorem~1.10]{DK16}, we find that $W$ is uniformly continuous in $B_5$ with its modulus of continuity determined by $n$, $\lambda$, $\Lambda$, and $\omega_{\mathbf A}$.
Also, Lemma~\ref{lem8.06} in the Appendix implies that $W$ is bounded from above and below in $B_5$ with its lower and upper bounds depending only on $n$, $\lambda$, $\Lambda$, and $\omega_{\mathbf A}$.

Therefore, by \cite[Theorem 2.8]{Es00}, there is a unique normalized adjoint solution $\tilde v$ that satisfies
\[
D_{ij}(a^{ij}\tilde v W)=0\; \text{ in }\;\Omega,\quad \tilde v = \frac{\psi}{W}\; \text{ on }\;\partial\Omega.
\]
Moreover, $\tilde v \in C(\overline\Omega)$ with a modulus of continuity controlled by $n$, $\lambda$, $\Lambda$, the Lipschitz character of $\Omega$, and the modulus of continuity of $\frac{\psi}{W}$.
The latter in turn is controlled by $n$, $\lambda$, $\Lambda$, $\omega_{\mathbf A}$, and $\psi$.
It is clear that $v=\tilde v W$ satisfies all the desired properties. \qed

\section{Appendix}\label{appendinx1}
The following lemma is a slight generalization of \cite[Lemma~2.1]{DK16}.
For the completeness, we present a proof here.
\begin{lemma}		\label{lem01-stein}
Let $\Omega\subset \bR^n$ be a bounded domain satisfying the condition \eqref{cond_a} and let $T$ be a bounded linear operator from $L^2(\Omega)$ to $L^2(\Omega)$.
Let $\mu \in (0,1)$ be a constant.
Suppose that for any $x_0 \in \Omega$ and $0<r< \mu \diam \Omega$, we have
\begin{equation}					\label{eq14.58w}
\int_{\Omega \setminus B(x_0, cr)} \abs{Tb} \le C \int_{B(x_0, r)\cap \Omega} \abs{b}
\end{equation}
whenever $b \in L^2(\Omega)$ is supported in $B(x_0, r)\cap \Omega$, $\int_{\Omega} b =0$, and $c>1$ and $C>0$ are constants.
Then for $f \in L^2(\Omega)$ and any $t>0$, we have
\begin{equation}					\label{eq13.38w}
\Abs{\set{x \in \Omega : \abs{Tf(x)} > t}}  \le \frac{C'}{t} \int_{\Omega} \abs{f},
\end{equation}
where $C'=C'(n, c , C, \mu, \Omega, A_0)$ is a constant.
\end{lemma}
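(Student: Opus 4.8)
The plan is to argue by the classical Calder\'on--Zygmund decomposition, adapted to the domain $\Omega$. First I would dispose of the easy range of $t$: if $t\le\fint_\Omega\abs{f}$ then trivially $\Abs{\set{x\in\Omega:\abs{Tf(x)}>t}}\le\abs{\Omega}\le\frac1t\int_\Omega\abs{f}$, so it suffices to treat $t>\fint_\Omega\abs{f}$.

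The heart of the matter is a Calder\'on--Zygmund stopping--time decomposition of $f$ at height $t$ that is intrinsic to $\Omega$ rather than to all of $\bR^n$, and this is exactly where condition \eqref{cond_a} is used. Indeed \eqref{cond_a} makes $(\Omega,\abs{\cdot},dx)$ a space of homogeneous type, since $\abs{\Omega(x,2r)}\le\abs{B(x,2r)}\le(2^nc_n/A_0)\,\abs{\Omega(x,r)}$ for $x\in\overline\Omega$ and $r\le\diam\Omega$; consequently $\Omega$ carries a Christ--type system of dyadic ``cubes''. Running the stopping time on these cubes (at scales below a fixed fraction of $\diam\Omega$, the coarser generations being irrelevant once $t>\fint_\Omega\abs{f}$) produces a pairwise disjoint family $\set{Q_l}$, each $Q_l\subset\Omega$ with $Q_l\subset\Omega(x_l,r_l)$ for some $x_l\in\Omega$ and $r_l>0$ satisfying $\abs{Q_l}\approx r_l^n$, such that
\[
t<\fint_{Q_l}\abs{f}\le A_1t,\qquad \sum_l\abs{Q_l}\le\frac1t\int_\Omega\abs{f},\qquad \abs{f}\le Ct\ \text{a.e.\ on}\ \Omega\setminus\textstyle\bigcup_l Q_l,
\]
the last assertion coming from the Lebesgue differentiation theorem on the homogeneous space $\Omega$, and the doubling property being what forces the upper bound $\fint_{Q_l}\abs{f}\le A_1t$ when one passes from $Q_l$ to its parent. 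I would then split $f=g+b$ with $b=\sum_l b_l$,
\[
b_l:=\Bigl(f-\fint_{Q_l}f\Bigr)\chi_{Q_l},\qquad g:=f\ \text{on}\ \Omega\setminus\textstyle\bigcup_l Q_l,\quad g:=\fint_{Q_l}f\ \text{on}\ Q_l.
\]
Because $Q_l\subset\Omega$, each $b_l$ satisfies $\int_\Omega b_l=0$ and is supported in $\Omega(x_l,r_l)$, with $\int_\Omega\abs{b_l}\le 2\int_{Q_l}\abs{f}\le 2A_1t\,\abs{Q_l}$; moreover $\abs{g}\le Ct$ a.e., so $\norm{g}_{L^2(\Omega)}^2\le Ct\,\norm{g}_{L^1(\Omega)}\lesssim t\int_\Omega\abs{f}$.

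With the decomposition in hand the remaining estimates are routine. From $\set{\abs{Tf}>t}\subset\set{\abs{Tg}>t/2}\cup\set{\abs{Tb}>t/2}$, Chebyshev's inequality together with the $L^2$--boundedness of $T$ gives $\Abs{\set{\abs{Tg}>t/2}}\le 4\norm{T}^2t^{-2}\norm{g}_{L^2(\Omega)}^2\lesssim\frac1t\int_\Omega\abs{f}$. For the bad part, let $c>1$ be the constant of \eqref{eq14.58w} and put $B_l^\ast:=B(x_l,cr_l)$; then $\Abs{\bigcup_l B_l^\ast}\le\sum_l c_n(cr_l)^n\lesssim\sum_l\abs{Q_l}\le\frac1t\int_\Omega\abs{f}$, while on $\Omega\setminus\bigcup_l B_l^\ast$ the hypothesis \eqref{eq14.58w} applies to each $b_l$ (it is supported in $\Omega(x_l,r_l)$ with vanishing $\Omega$--mean, and $x_l\in\Omega$), so
\[
\int_{\Omega\setminus\bigcup_l B_l^\ast}\abs{Tb}\le\sum_l\int_{\Omega\setminus B_l^\ast}\abs{Tb_l}\le C\sum_l\int_{\Omega(x_l,r_l)}\abs{b_l}\lesssim t\sum_l\abs{Q_l}\le\int_\Omega\abs{f},
\]
whence $\Abs{\set{x\in\Omega\setminus\bigcup_l B_l^\ast:\abs{Tb(x)}>t/2}}\le\frac2t\int_\Omega\abs{f}$ by Chebyshev. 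Summing the three contributions yields \eqref{eq13.38w}.

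The one genuinely delicate step is the construction of the family $\set{Q_l}$ with the three listed properties. The reason an ``off the shelf'' Euclidean Calder\'on--Zygmund decomposition does not suffice is that a dyadic cube of $\bR^n$ may meet $\Omega$ in a set of arbitrarily small relative measure near $\partial\Omega$, which would destroy both the upper bound on $\fint_{Q_l}\abs{f}$ and the pointwise bound on $g$; passing to cubes intrinsic to $\Omega$, whose existence rests precisely on the doubling inequality provided by \eqref{cond_a}, removes this difficulty. Everything afterwards --- the $L^2$ estimate for $g$, the volume estimate for $\bigcup_l B_l^\ast$, and the summation of the off--diagonal bounds --- is bookkeeping of the standard kind, together with the harmless reductions to large $t$ and to scales $r_l<\mu\diam\Omega$ that are needed so that \eqref{eq14.58w} may be invoked.
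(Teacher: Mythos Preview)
Your proposal is correct and follows essentially the same approach as the paper: both use Christ's dyadic cubes on the homogeneous space $(\Omega,\abs{\cdot},dx)$ (which exists precisely because of \eqref{cond_a}) to run a Calder\'on--Zygmund stopping time, split $f=g+b$, handle $g$ via Chebyshev and the $L^2$--boundedness of $T$, and handle $b$ via the off--diagonal hypothesis \eqref{eq14.58w} together with the volume bound $\abs{\bigcup_l B_l^\ast}\lesssim\sum_l\abs{Q_l}$. You also correctly flag the reductions (large $t$, scales $r_l<\mu\diam\Omega$) and the reason Euclidean dyadic cubes are inadequate near $\partial\Omega$, which mirrors the paper's reasoning.
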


\begin{proof}
To begin with, we note that $\Omega$ equipped with the standard Euclidean metric and the Lebesgue measure (restricted to $\Omega$) is a space of homogeneous type.
By \cite[Theorem~11]{Ch90}, there exists a collection of open subsets (called ``cubes'')
\[
\set{Q^k_\alpha \subset \Omega :  k \in \bZ, \; \alpha \in I_k},
\]
with $I_k$ at most countable set and constants $\delta \in (0,1)$, $a_0>0$ and $C_1<\infty$ such that
\begin{enumerate}[i)]
\item
$\Abs{\Omega \setminus \bigcup_\alpha Q^k_\alpha} =0\quad \forall k$.
\item
If $\ell \ge k$ then either $Q^\ell_\beta \subset Q^k_\alpha$ or $Q^\ell_\beta \cap Q^k_\alpha=\emptyset$.
\item
For each $(k,\alpha)$ and each $\ell < k$ there is a unique $\beta$ such that $Q^k_\alpha \subset Q^\ell_\beta$.
\item
$\diam Q^k_\alpha \le C_1 \delta^k$.
\item
Each $Q^k_\alpha$ contains some \lq\lq ball\rq\rq\ $B(z^k_\alpha, a_0 \delta^k) \cap \Omega$.
\end{enumerate}
From the above, we can infer the following.
\begin{enumerate}[(a)]
\item
There is constant $A_1\ge 1$ such that if $Q^{k-1}_\beta$ is the parent of $Q^k_\alpha$ (resp. if $B^k_\alpha$ is the Euclidean ball in $\bR^n$ centered at $z^k_\alpha$ with radius $r=\diam Q^k_\alpha$\,), then we have
\begin{equation}				\label{p1}
\abs{Q^{k-1}_\beta} \le A_1 \abs{Q^k_\alpha}\quad (\text{resp. }\;\abs{B^{k}_\alpha} \le A_1 \abs{Q^k_\alpha}\,).
\end{equation}
\item
The Lebesgue differentiation theorem is available for the chain of cubes shrinking to a point because the maximal function defined as
\begin{equation*}
M(f)(x)=
\begin{cases}
\sup_{x\in Q^k_\alpha}\fint_{Q^k_\alpha}\abs f\, dx,\ &\text{when}\ x\in\bigcap_k\bigcup_{\alpha\in I_k}Q^k_\alpha,\\
0,\ &\text{otherwise},
\end{cases}
\end{equation*}
is of weak type-$(1,1)$ over $\Omega$.
\end{enumerate}
By i) -- v) above and \eqref{cond_a}, choose $k_0\in \bZ$ with $\theta:=\inf_{\alpha\in I_{k_0}}|Q^{k_0}_\alpha|>0$. To get \eqref{eq13.38w}  when
\[\frac 1t\int_{\Omega} \abs{f}\ dx>\theta,\]
it suffices to choose $C'\ge \theta^{-1}\abs{\Omega}$.
Otherwise,
\[
\fint_{Q^{k_0}_\alpha}\abs{f}\ dx\le t,\ \text{for all}\ \alpha\in I_{k_0}.
\]
Let then $\set{Q_l}$ denote the set of cubes chosen as follows.
For $k=k_0+1$ and $\alpha \in I_k$, the cube $Q=Q^k_\alpha$ satisfies either
$\fint_{Q}\, \abs{f} \le t$ or $\fint_{Q}\, \abs{f} > t$.
In the second case, we select $Q=Q^k_\alpha$ as one of the cubes in $\set{Q_l}$.
Note that in this case, we have by \eqref{p1}
\[
t<\fint_{Q} \abs{f}\,dx  \le A_1 t.
\]
In the first case, we subdivide $Q=Q^k_\alpha$ further into subcubes $Q'=Q^{k+1}_\beta$, and repeat the process until (if ever) we are forced into the second case.
By observation (b), we find that $\abs{f(x)} \le t$ for a.e. $x \in \Omega \setminus \bigcup_l Q_l$.

We decompose $f=g+b$, with $b=\sum_l b_l$, such that
\[
g=  m_l(f):=\fint_{Q_l} f \,dx\;\text{ on }\;Q_l,
\]
$g=f$ on $\Omega \setminus \bigcup_l Q_l$, and set
\[
b_l= \chi_{Q_l}\left(f - m_l(f) \right).
\]
It is obvious that $\int_{Q_l} b_l \,dx =0$ and we have
\begin{equation}		\label{a11.18f}
\int_{Q_l} \abs{b_l} \,dx \le \int_{Q_l} \abs{f}\,dx + \abs{Q_l} m_l(f) \le 2 \int_{Q_l} \abs{f} \,dx \le 2A_1 t \abs{Q_l}.
\end{equation}
Also, we see that
\begin{equation}		\label{a11.17f}
\abs{g(x)} \le A_1 t \quad\text{for a.e. }x \in \Omega.
\end{equation}
Indeed, for a.e. $x \in \Omega \setminus  \bigcup_l Q_l$, we have $\abs{g(x)} =\abs{f(x)} \le t$ and $\abs{g(x)} \le A_1 t$ on $Q_l$.
By Chebyshev's inequality and the $L^2$ boundedness of $T$, we have
\begin{align}
					\nonumber
\Abs{\set{x \in \Omega : \abs{Tg(x)} > \tfrac12 t}} &\lesssim \frac{1}{t^2} \int_{\Omega}\, \abs{Tg}^2\,dx \lesssim \frac{1}{t^2} \int_{\Omega}\, \abs{g}^2\,dx \\
					\label{a10.31m}
&\lesssim \frac{1}{t} \int_{\Omega \setminus \bigcup_l Q_l} \abs{f}\,dx + \sum_l \,\abs{Q_l} \lesssim \frac1t \int_{\Omega} \abs{f}\,dx,
\end{align}
where we used \eqref{a11.17f} and the property that
\begin{equation}				\label{a14.46m}
\sum_l \,\abs{Q_l} \le \frac{1}{t} \int_{\Omega} \abs{f}\,dx.
\end{equation}
We associate each $Q_l=Q^k_\alpha$ with a Euclidean ball $B_l=B(x_l, r_l)$, where $x_l=z^k_\alpha\in \Omega$ and $r_l=\diam Q^k_\alpha$.
Let us denote $B_l^*=B(x_l, cr_l)$.
Since $Tb =\sum_l Tb_l$, we have
\[
\int_{\Omega \setminus \bigcup_l B_l^*} \abs{Tb}\,dx \le \sum_l \int_{\Omega \setminus  B_l^*}  \abs{T b_l}\,dx.
\]
By the hypothesis \eqref{eq14.58w} together with \eqref{a11.18f} and \eqref{a14.46m}, we get
\[
\int_{\Omega \setminus \bigcup_{l} B_l^*} \abs{Tb}\,dx \le C \sum_l \int_{Q_l}  \abs{b_l}\,dx \lesssim t \sum_l \abs{Q_l} \lesssim \int_{\Omega} \abs{f}\,dx,
\]
which via Chebyshev's inequality shows that
\[
\Abs{\set{x \in \Omega: \abs{Tb(x)} > \tfrac12 t} \setminus \textstyle\bigcup_l B_l^*\,} \lesssim \frac1t \int_{\Omega} \abs{f}\,dx.
\]
Also, by \eqref{p1}, we have
\[
\Abs{\textstyle \bigcup_l B_l^*} \le c^n \sum_l\, \abs{B_l}\  \lesssim \sum_l \,\abs{Q_l} \lesssim \frac1t \int_{\Omega} \abs{f}\,dx.
\]
Together then, the last two estimates imply
\[
\Abs{\set{x \in \Omega: \abs{Tb(x)} > \tfrac12 t}} \lesssim \frac1t \int_\Omega \abs{f}\,dx,
\]
which combined with \eqref{a10.31m} gives \eqref{eq13.38w} since $Tf=Tg+Tb$.
\end{proof}

Finally we prove the following Harnack type inequality for nonnegative adjoint solutions.

\begin{lemma}				\label{lem8.06}
Assume the coefficients $\mathbf{A}=(a^{ij})$ are of Dini mean oscillation and satisfy the condition \eqref{ellipticity-nd}.
Let $w\in L^2(B_4)$ be a nonnegative solution to $D_{ij}(a^{ij} w)=0$ in $B_4=B(0,4)$ and $\norm{w}_{L^1(B_3)}=1$.
Then we have
\[
c\le \inf_{B_1}\,w,\quad \sup_{B_1}\,w \le C,
\]
where $c$ and $C$ are positive constants depending only on $n$, $\lambda$, $\Lambda$, and $\omega_{\mathbf{A}}$.
\end{lemma}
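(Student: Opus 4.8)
The plan is to deduce the statement, for a general nonnegative adjoint solution $w$, from a two-sided \emph{pointwise} bound on a single reference weight $W$, and to obtain that bound by combining the interior continuity estimate of \cite[Theorem~1.10]{DK16} with the Muckenhoupt structure of adjoint solutions. \emph{Set-up:} following the construction in the proof of Theorem~\ref{thm-weak11-nd}, let $W\in L^2(B_4)$ be the adjoint solution to $D_{ij}(a^{ij}W)=0$ in $B_4$ with $W=1$ on $\partial B_4$; then $W\ge 0$, it satisfies the doubling property \eqref{doubling} and the reverse H\"older inequality \eqref{E: Muckenhoupt} on balls $B(x_0,2r)\subset B_4$, and $W(B_4)\approx 1$ (see \cite{FS1984,Es94,FGMS1988}).

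\emph{Step 1: two-sided bounds for $W$.} First I would prove that $m\le W\le M$ on $\overline{B_{7/2}}$ for constants $0<m\le M$ depending only on $n,\lambda,\Lambda,\omega_{\mathbf A}$. The upper bound, together with a uniform modulus of continuity for $W$, is exactly (the proof of) Lemma~\ref{L:1}: apply the continuity estimate of \cite[Theorem~1.10]{DK16} to $W$ (which has $\mathbf g\equiv 0$) and use \eqref{doubling} and $W(B_4)\approx 1$. The lower bound is the crux. The reverse H\"older inequality \eqref{E: Muckenhoupt} and \eqref{doubling} put $W$ in the Muckenhoupt class $A_\infty$, so $\log W\in \mathrm{BMO}$ locally and $W>0$ a.e.; then, following \cite{Es94} and \cite[Lemma~3]{EM2016} and using crucially that $\mathbf A$ is of Dini mean oscillation, a Campanato-type iteration on dyadic balls upgrades this to: $\log W$ is itself of Dini mean oscillation on $\overline{B_{7/2}}$, hence --- since balls satisfy \eqref{cond_a} --- $\log W$ is continuous there, so $W\ge m>0$. (Alternatively, by contradiction: if $W(x^\ast)=0$ at some $x^\ast\in\overline{B_{7/2}}$, the uniform modulus of continuity from \cite[Theorem~1.10]{DK16}, Lemma~\ref{L:1} and \eqref{doubling} make the rescalings of $W$ about $x^\ast$ precompact, with a nontrivial nonnegative limit vanishing at the origin and solving a constant-coefficient equation --- as $\omega_{\mathbf A}(r)\to 0$ --- contradicting the classical Harnack inequality.)

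\emph{Step 2: general $w$, and normalization.} Since $W$ is continuous and $\ge m>0$ on $\overline{B_{7/2}}$ and $w$ is continuous by \cite[Theorem~1.10]{DK16}, the ratio $\tilde w:=w/W$ is a nonnegative normalized adjoint solution in $B_4$. As $\mathbf A$ is uniformly continuous (Dini mean oscillation with \eqref{cond_a}), the Harnack inequality for normalized adjoint solutions of \cite{Ba84} applies to $\tilde w$ on every ball $B(x_0,\rho)$ with $B(x_0,2\rho)\subset B_4$; a finite chaining argument gives $\sup_{B_{7/2}}\tilde w\le C\inf_{B_{7/2}}\tilde w$. Multiplying by $W$ and using $m\le W\le M$ yields the Harnack inequality for $w$ itself, $\sup_{B_{7/2}}w\le C\inf_{B_{7/2}}w$ with $C=C(n,\lambda,\Lambda,\omega_{\mathbf A})$. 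Finally, using $\norm{w}_{L^1(B_3)}=1$ and $B_1\subset B_3\subset B_{7/2}$,
\[
\sup_{B_1}w\le\sup_{B_{7/2}}w\le C\inf_{B_{7/2}}w\le C\fint_{B_1}w\le\frac{C}{\abs{B_1}}\,\norm{w}_{L^1(B_3)}=\frac{C}{\abs{B_1}},
\]
\[
\inf_{B_1}w\ge\inf_{B_{7/2}}w\ge\frac1C\,\sup_{B_{7/2}}w\ge\frac1C\,\fint_{B_3}w=\frac{1}{C\abs{B_3}},
\]
which are the asserted bounds.

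\textbf{Main obstacle.} The essential difficulty is the lower bound for $W$ in Step 1 --- that the reference adjoint solution cannot vanish inside the domain. This is precisely the point at which the Dini mean oscillation hypothesis is indispensable: for merely continuous coefficients $W$ can vanish or be unbounded inside (\cite[\S 3]{Es94}), and what rescues the lower bound is the quantitative oscillation decay of the Dini condition, propagated through the reverse H\"older inequality as in \cite[Lemma~3]{EM2016}. The remaining steps are comparatively routine consequences of results recorded earlier in the paper.
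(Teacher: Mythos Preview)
Your route is genuinely different from the paper's. The paper proves both bounds for $w$ directly, without introducing a reference weight or normalized adjoint solutions. The upper bound comes from \eqref{eq2.45} exactly as in Lemma~\ref{L:1}. For the lower bound the paper argues by contradiction on \emph{sequences} $(\mathbf{A}_k,w_k)$ with $\omega_{\mathbf{A}_k}\le\omega$, $\norm{w_k}_{L^1(B_3)}=1$, and $w_k(x_k)\le 1/k$; by Arzel\`a--Ascoli it passes to a limit pair $(\mathbf{A},w)$ with $w(y_0)=0$ and $w$ still a nonnegative adjoint solution for the \emph{variable} coefficients $\mathbf{A}$. The contradiction then comes not from constant-coefficient Harnack but from the estimate \eqref{eq2.45}, which forces $\fint_{B(y_0,r)}w\lesssim r^{\beta/2}$, against the lower bound $\fint_{B(y_0,r)}w\gtrsim r^\varepsilon$ (any $\varepsilon>0$) from \cite[Theorem~1.5]{Es00}. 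Your structural approach---prove two-sided bounds on a fixed $W$, then transfer via the Harnack inequality of \cite{Ba84} for $\tilde w=w/W$---is the one alluded to in Remark~\ref{R:1}, and Step~2 is fine. What your approach would buy, if completed, is a cleaner separation of the roles of the weight and the general solution; what the paper's approach buys is a direct, self-contained argument resting only on \eqref{eq2.45} and \cite{Es00}.

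There is, however, a real gap in your Step~1 lower bound. Your alternative blow-up argument only shows $W>0$ everywhere for a \emph{fixed} $\mathbf{A}$, so it yields $m=\min_{\overline{B_{7/2}}}W=m(\mathbf{A})$, not a bound depending only on $\omega_{\mathbf{A}}$ as the lemma demands; to recover uniformity you would have to also vary the coefficients, at which point you are essentially running the paper's compactness argument. Your primary approach---that $\log W$ is of Dini mean oscillation with modulus controlled by $\omega_{\mathbf{A}}$---would indeed give a uniform $m$, but this is precisely the nontrivial step: \cite{Es94} gives only $\log W\in\mathrm{VMO}$ under a VMO hypothesis, and the ``Campanato-type iteration'' upgrading this to the Dini setting is asserted rather than carried out. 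Note that Remark~\ref{R:1} cites exactly the same ingredients and then \emph{points to Lemma~\ref{lem8.06}} as the place where the conclusion is proved, so invoking that remark here is circular.
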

\begin{proof}
The upper bound follows with the same type of reasoning as in the proof of Lemma \ref{L:1}, because from \cite[(2.25)]{DK16}, we have
\begin{equation}
                                            \label{eq2.45}
\abs{w(x)-w(y_0)}  \le C \left(\left(\frac{\abs{x-y_0}}{R}\right)^\beta + \int_0^{\abs{x-y_0}} \frac{\tilde\omega_{\mathbf{A}}(t)}t \,dt \right) R^{-d} \norm{w}_{L^1(B(y_0,R))}
\end{equation}
for $x \in B(y_0, \frac12 R)$, $y_0\in B_1$ and $R\in (0,1]$.
Here $\beta>0$ is an absolute constant and $\tilde \omega_{\mathbf{A}}$ is defined as in \eqref{eq14.27w}.

We prove the lower bound by contradiction.
Suppose the claim is not true.
Then we can find a sequence of coefficients $\set{\mathbf{A}_k}$ satisfying
\[
\sup_k \omega_{\mathbf{A}_k}(t)  \le \omega(t)
\]
for some Dini function $\omega$ and a sequence of nonnegative solutions $\set{w_k}$ to
$$
D_{ij}(a^{ij}_k w_k)=0\quad \text{in}\,\, B_4
$$
such that
\[
\norm{w_k}_{L^1(B_3)}=1\quad\text{and}\quad w_k(x_k) \le  1/k
\]
for some $x_k\in B_1$.
After passing to a subsequence, we may assume that $x_k\to y_0 \in \bar B_1$.
By \cite[Theorem 1.10]{DK16}, $\set{w_k}$ is uniformly bounded and equicontinuous in $\bar B_2$.
Of course, $\set{\mathbf{A}_k}$ is also uniformly bounded and equicontinuous in $\bar B_2$.
Therefore, by the Arzel\`a--Ascoli theorem, they have subsequences, still denoted by $\set{w_k}$ and $\set{\mathbf{A}_k}$, which converge to $w$ and $\mathbf{A}$ uniformly in $\bar B_2$, with the same moduli of continuity.
It is easily seen that $w$ is a nonnegative solution of
\[
D_{ij}(a^{ij}w)=0 \quad \text{in}\quad B_2
\]
and $w(y_0)=0$.
By the doubling property of $w$ (see \cite{FS1984}), $\norm{w_k}_{L^1(B_2)}$ is bounded from below and above uniformly. It then follow from the uniform convergence that  $\norm{w}_{L^1(B_2)}$ is also bounded from below and above.

Let $\kappa\in (0,1/2)$ be a small constant to be specified later.
From \eqref{eq2.45}, for any $R\in (0,1]$, we have
\[
\fint_{B(y_0, \kappa R)}w\le N\left(\kappa^\beta+\int_0^{\kappa R}
\frac{\tilde\omega_{\mathbf{A}}(t)}t \,dt \right) \fint_{B(y_0,R)}w,
\]
where $N$ is independent of $\kappa$.
We then fix $\kappa$ sufficiently small such that $2N\kappa^\beta\le \kappa^{\beta/2}$. Then for any small $R$ such that
\[
\int_0^{\kappa R}\frac{\tilde\omega_{\mathbf{A}}(t)}t \,dt\le \kappa^{\beta},
\]
we obtain
\[
\fint_{B(y_0, \kappa R)}w\le \kappa^{\beta/2} \fint_{B(y_0, R)}w.
\]
By iteration, we deduce $\fint_{B(y_0, r)} w\le Nr^{\beta/2}$.
This, however, contradicts with \cite[Theorem 1.5]{Es00}, which reads that for any $\varepsilon>0$, it holds that $\fint_{B(y_0, r)}w \gtrsim r^{\varepsilon}$ for all $r\in (0,1)$.
\end{proof}
\begin{acknowledgment}
Part of this work was done at the time when the second author was attending the Harmonic Analysis Program held at M.S.R.I. from January to May 2017. He would like to thank the members of the Institute and the organizers of the program for their hospitality.
\end{acknowledgment}

%
%

\end{document}